\documentclass[11pt]{article}
\usepackage[toc,page]{appendix}
\usepackage{bbm}
\usepackage{fullpage}
\usepackage[top=1in, bottom=1.25in, left=1in, right=1in]{geometry}
\usepackage{etaremune}
\usepackage{enumitem}
\usepackage{graphicx}
\usepackage[labelformat=empty]{caption}
\usepackage{subcaption}
\usepackage{tikz}
\usepackage{amssymb,amsmath,amsthm}
\usepackage{hyperref}
\usepackage[capitalise,nameinlink]{cleveref}
\crefname{thm}{Theorem}{Theorems}
\crefname{lem}{Lemma}{Lemmas}
\crefname{rmk}{Remark}{Remarks}
\usepackage[linesnumbered,lined,ruled]{algorithm2e}
\usepackage{comment}
\usepackage{mathabx}

\usetikzlibrary{decorations.pathreplacing,calligraphy}

\DeclareSymbolFont{rsfs}{U}{rsfs}{m}{n}
\DeclareSymbolFontAlphabet{\mathscrsfs}{rsfs}
\usepackage{mathrsfs}
\usepackage{url}
\usepackage{times}

\hypersetup{
  colorlinks   = true,
  urlcolor     = blue,
  linkcolor    = blue,
  citecolor   = red
}

\newtheorem*{rep@theorem}{\rep@title}
\newcommand{\newreptheorem}[2]{%
\newenvironment{rep#1}[1]{%
 \def\rep@title{#2 \ref{##1}}%
 \begin{rep@theorem}}%
 {\end{rep@theorem}}}
\makeatother

\newtheorem{thm}{Theorem}[section]
\newtheorem{lem}[thm]{Lemma}

\newreptheorem{ppn}{Proposition}

\theoremstyle{definition}

\newtheorem{rmk}[thm]{Remark}

\numberwithin{equation}{section}

\def\beq#1\eeq{%
    \begin{equation}%
    #1%
    \end{equation}%
}
\def\baln#1\ealn{%
    \begin{align*}%
    #1%
    \end{align*}%
}
\def\balnn#1\ealnn{%
    \begin{align}%
    #1%
    \end{align}%
}

\def\fr{\frac}
\def\lt{\left}
\def\rt{\right}
\def\la{\langle}
\def\ra{\rangle}
\def\<{\langle}
\def\>{\rangle}

\def\eps{{\varepsilon}}

\def\norm#1{\lt\|#1\rt\|}

\def\bbE{{\mathbb{E}}}

\def\bbP{{\mathbb{P}}}

\def\bbR{{\mathbb{R}}}

\DeclareMathOperator*{\EE}{\bbE}
\DeclareMathOperator*{\PP}{\bbP}

\DeclareMathOperator*{\argmin}{arg\,min}

\def\cE{{\mathcal{E}}}

\def\cN{{\mathcal{N}}}

\def\cR{{\mathcal{R}}}

\def\be{{\boldsymbol e}}
\def\bg{{\boldsymbol g}}
\def\bm{{\boldsymbol m}}

\def\bu{{\boldsymbol u}}
\def\bv{{\boldsymbol v}}
\def\bw{{\boldsymbol w}}
\def\bx{{\boldsymbol x}}
\def\by{{\boldsymbol y}}

\def\bV{{\boldsymbol V}}
\def\bA{{\boldsymbol A}}
\def\bE{{\boldsymbol E}}
\def\bG{{\boldsymbol G}}

\def\bW{{\boldsymbol W}}
\def\bZ{{\boldsymbol Z}}
\def\tbW{\tilde{\boldsymbol W}}

\def\tW{\tilde{W}}

\def\bDel{{\boldsymbol \Delta}}

\def\hm{\hat{m}}

\def\Err{{\sf Err}}

\def\bfone{{\boldsymbol 1}}

\def\by{{\boldsymbol y}}

\def\bsig{{\boldsymbol \sigma}}
\def\brho{{\boldsymbol \rho}}

\def\btau{{\boldsymbol \tau}}

\def\bB{{\boldsymbol B}}
\def\bI{{\boldsymbol I}}

\def\bLambda{{\boldsymbol \Lambda}}
\def\bq{{\boldsymbol q}}

\def\txi{{\widetilde \xi}}

\def\tg{{\widetilde g}}

\def\hbm{{\widehat \bm}}

\def\bR{{\boldsymbol R}}

\def\de{{\mathsf{d}}}

\def\Band{{\mathsf{Band}}}

\def\Tr{{\mathsf{Tr}}}

\def\sTAP{{\mbox{\tiny\sf TAP}}}

\def\op{\mbox{{\tiny \rm op}}}

\definecolor{darkblue}{RGB}{25,25,200}

\definecolor{darkpurple}{RGB}{100,0,100}

\author{
    Brice Huang\thanks{Department of Statistics, Stanford University}
    \and
    Andrea Montanari\thanks{Department of Statistics and Department of Mathematics, Stanford University}
    \and
    Huy Tuan Pham\thanks{Department of Mathematics, University of Chicago}
}

\title{Approximating the Gibbs Measure of Spherical Spin Glasses to Within
$o(N^{-1/2})$ in the Replica Symmetric Phase}

\begin{document}

\maketitle

\begin{abstract}
We study the magnetization vector of mixed spherical spin glasses in a
weak external field, in the replica symmetric phase.  We derive a
coordinatewise expansion of the magnetization which includes the
leading cubic correction to the quadratic approximation, with an error
that is $o(N^{-1/2})$ per coordinate.  We also prove a corresponding
high-moment bound for the full vector.  Finally, our companion paper
applies these estimates
to the effective Hamiltonian obtained by restricting a tilted spherical
spin glass to a narrow band around a TAP critical point.  The resulting
sharp approximation is designed for applications in which normalized
Wasserstein accuracy is insufficient, including total-variation
sampling by stochastic localization.
\end{abstract}

\tableofcontents

\section{Introduction}

Let $S_N=\{\bsig\in\bbR^N:\|\bsig\|_2^2=N\}$ and let $\mu_0$ be its
uniform probability measure.  We consider the mixed spherical
Hamiltonian
\begin{align}
\label{eq:def-HN}
H_N(\bsig)
&=\sum_{p\geq2}\frac{\gamma_p}{N^{(p-1)/2}}
\sum_{i_1,\ldots,i_p=1}^N
g_{i_1,\ldots,i_p}\sigma_{i_1}\cdots\sigma_{i_p},
&
g_{i_1,\ldots,i_p}&\stackrel{\rm iid}{\sim}\cN(0,1),
\end{align}
with mixture $\xi(s)=\sum_{p\geq2}\gamma_p^2s^p$.  Thus
\[
\EE H_N(\bsig^1)H_N(\bsig^2)
=N\xi(\langle\bsig^1,\bsig^2\rangle_N),
\]
where $\langle \bsig^1, \bsig^2 \rangle_N = \langle \bsig^1, \bsig^2 \rangle / N$ is the normalized overlap,
and $\|\bx\|_N=\|\bx\|/\sqrt{N}$ is the corresponding normalized norm.
The principal object of this paper is the magnetization of the weakly
tilted Gibbs measure
\[
\mu_{H_N,\bu}(\de\bsig)
=\frac{1}{Z_{N,\bu}}\exp\{H_N(\bsig)+\langle\bu,\bsig\rangle\}
\mu_0(\de\bsig),
\]
where $\bu$ is a small tilt in the sense that $\|\bu\| \le N^{c_0}$ for a small constant $c_0$.
Our goal is an approximation sharper than the natural
$N^{-1/2}$ coordinate scale.

The basic replica-symmetry assumption used for the local estimate is
\begin{equation}
\label{eq:replica-symmetry}
\xi''(0)<1,\qquad
\xi(q)+q+\log(1-q)<0\quad\text{for every }q\in(0,1).
\end{equation}
As explained in the companion paper \cite{huang2026sampling}, this is a strict version of the following criterion, under which the model's free energy is replica symmetric \cite{talagrand2006spherical}:
\[
\xi(q) + q + \log(1-q) \le 0 \quad \text{for every }q\in [0,1).
\]
The companion \cite{huang2026sampling} assumes the condition
\begin{equation}
\label{eq:amp-works}
\xi''(s)<\frac{1}{(1-s)^2}\quad\text{for every }s\in[0,1),
\end{equation}
which implies \eqref{eq:replica-symmetry} by integrating twice.

The first main result, Theorem~\ref{lemma:LocalEffective1}, gives an
explicit coordinatewise expansion: the resolvent response to the
external field plus the leading contribution from the cubic disorder.
Theorem~\ref{lem:correction-band} strengthens this to a high-moment
estimate for the whole magnetization vector.  This paper
proves these statements for a general weak-field model.

In the companion paper \cite{huang2026sampling}, these results are used to estimate the magnetization of a Gibbs measure, arising from stochastic localization, with a macroscopic external field.
In that paper, it is first shown that this Gibbs measure is concentrated on a spherical band of vectors approximately orthogonal to a TAP fixed point $\bm^{\sTAP}$.
This band can be viewed as a union of codimension-two spherical slices orthogonal to $\bm^{\sTAP}$, which are each approximated by a mixed $p$-spin model with zero or small magnetic field.
This paper's results are used to estimate the magnetizations of these models.

\subsection{Related work}

The motivation for the precision pursued here comes in part from
algorithmic stochastic localization. This approach was introduced for
spin-glass sampling in \cite{alaoui2022sampling}, building on Eldan's
stochastic-localization method
\cite{eldan2013thin,eldan2020taming,eldan2022analysis}. Closely related
processes appear in machine learning as denoising diffusions
\cite{sohl2015deep,ho2020denoising,song2021score}; see
\cite{montanari2023sampling} for further discussion of this connection.
In stochastic localization, the drift is the magnetization of a
Gaussian tilt of the target measure. Consequently, controlling the law
of a long stochastic localization trajectory requires an estimate of this
magnetization that is much sharper than the usual extensive-scale
control.

Earlier analyses of algorithmic stochastic localization for mean-field
spin glasses established sampling guarantees in normalized Wasserstein
distance \cite{alaoui2022sampling,celentano2024sudakov,alaoui2023sampling}.
Their mean estimator is based on the Thouless--Anderson--Palmer (TAP)
free energy \cite{thouless1977solution}, with approximate message
passing selecting the relevant TAP critical point. These works control
the squared error of the TAP estimate at the $o(N)$ scale. This is the
standard precision for macroscopic observables in spin-glass theory
\cite{mezard1987spin,talagrand2010mean}, but it does not suffice for
controlling the total variation error of the algorithm. The present paper isolates the local calculation behind
that stronger conclusion: the TAP center must be supplemented by an
explicit correction arising from the cubic part of the effective
Hamiltonian.

Our calculation also belongs to the broader study of spherical spin
glasses in the replica symmetric phase. For spherical quadratic models,
contour and Laplace-transform methods give precise information on
partition functions and their fluctuations
\cite{baik2016fluctuations}. We extend this type of analysis to
coordinate observables and then perturb away from the quadratic model,
keeping the leading contribution of the cubic disorder. The
high-temperature, or strictly replica symmetric, condition used here is
closely related to the overlap estimates and constructive analysis in
\cite{huang2023constructive}.

\section{Main Results}
\label{sec:FirstLocal}

We consider the Hamiltonian of Eq.~\eqref{eq:def-HN}, which we rewrite as
\begin{align}
H(\bsig)& =\<\bu,\bsig\> +H_{\ge2}(\bsig) = \<\bu,\bsig\> + \sum_{p\ge 2}H_p(\bsig) \,,\label{eq:FirstLocal}\\
H_{p}(\bsig)&=\frac{1}{N^{(p-1)/2}}\sum_{i_{1},\ldots,i_{p}=1}^Ng_{i_{1},\ldots,i_{p}}\sigma_{i_{1}}\dots\sigma_{i_{p}},
   \qquad
     g_{i_1,\ldots,i_p} \stackrel{i.i.d.}{\sim} \cN(0,\beta_p^2)\, .
\end{align}
We further $H_{\ge i}(\bsig) = \sum_{p\ge i}H_p(\bsig)$. We recast the mixture of
$H_{\ge2}$ as $\xi(s) = \sum_{p\ge 2} \beta_p^2 s^p$.

The results in this paper hold for all models satisfying the replica symmetry condition \eqref{eq:replica-symmetry}.
Note that the first inequality in \eqref{eq:replica-symmetry} implies $\beta_2^2 < 1/2$.

We will always assume $\|\bu\|_{2}\le c_0\sqrt{N}$, with $c_0$ a small constant, and in some lemmas $\|\bu\|_{2}\le N^{c_0}$.
We will denote by $\mu(\de\bsig) \propto \exp(H(\bsig))\,\mu_0(\de\bsig)$ the corresponding Gibbs measure.

Note that we can view $H_{2}$ as a quadratic form with $H_2(\bsig)=\<\bsig,\bW^{(2)}\bsig\>$
(with entries $W^{(2)}_{ij}=(g_{ij}+g_{ji})/2\sqrt{N}$).
Hence $\bW^{(2)}$ is a GOE matrix scaled by $\beta_2/\sqrt{2}$. We will work in the orthonormal
basis diagonalizing $\bW^{(2)}$ and let its spectrum be $\bLambda=(\Lambda_i)_{i\le N}$,
with $\Lambda_1\ge \Lambda_2\ge\cdots\ge\Lambda_N$.
We will occasionally identify $\bLambda$ with the diagonal matrix with diagonal entries $\Lambda_i$.
We also write $\bW^{(3)}$ for the symmetric 3rd-order tensor such that $H_3(\bsig) = \la \bW^{(3)}, \bsig^{\otimes 3} \ra$, written in the basis of eigenvectors of $\bW^{(2)}$.
That is, $\bW^{(3)}$ is obtained by rotating $\tbW^{(3)}$ with entries $\tW^{(3)}_{ijk}= (g_{ijk}+{\rm permutations})/6N$.

Given a symmetric matrix $\bA\in \bbR^{N\times N}$, we define
\begin{align}
    G(\gamma) = G(\gamma;\bA,\bu) &:= \gamma - \fr{1}{2N} \log \det(\gamma \bI - \bA ) + \fr{1}{4N} \<\bu,(\gamma \bI - \bA)^{-1} \bu\>,
    \label{eq:GDef}\\
    \gamma_\ast = \gamma_\ast(\bA,\bu) & = \argmin_{\gamma > \lambda_{\max}(\bA)} G(\gamma;\bA,\bu)\, . \label{eq:GammaDef}
\end{align}
Note that $G$ is convex with $\lim_{\gamma \downarrow \lambda_{\max}(\bA)} G'(\gamma) = -\infty$, $\lim_{\gamma \uparrow +\infty} G'(\gamma) = +\infty$, so $\gamma_\ast$ is also the unique solution to $G'(\gamma) = 0$.
We will omit the argument $\bA$ or $\bu$
whenever clear from the context (in particular, we typically omit $\bu$ and omit $\bA$ when $\bA=\bLambda$
is the diagonal matrix containing the eigenvalues of $\bW$).
\begin{thm}\label{lemma:LocalEffective1}
\label{lem:spin}
There exists $c_0>0$ such that, for $\|\bu\|\le N^{c_0}$,
and under the additional assumptions above, the following
holds. Let $\gamma_*=\gamma_*(\bLambda)$ and define, for $j\le N$
\begin{align}
    \hm_{j}:= \frac{u_{j}}{2(\gamma_{*}-\Lambda_{j})}
    +\frac{3}{4(\gamma_{*}-\Lambda_{j})} \sum_{i=1}^N
    \frac{W^{(3)}_{jii}}{\gamma_{*}-\Lambda_{i}}\, .
\end{align}
Set $\hbm:=(\hm_j)_{j\le N}$.
Then, for some $c>0$, for all $i\in [N]$, with probability $1-N^{-c}$,
\[
\int\sigma_{i}\,\mu(\de\bsig)= (1+O(N^{-c}))\Big(\hm_i+O(N^{-1/2-c})\Big).
\]
\end{thm}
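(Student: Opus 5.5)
Work in the eigenbasis of $\bW^{(2)}$. We first treat the quadratic model $H_{\le 2}(\bsig)=\<\bu,\bsig\>+\<\bsig,\bLambda\bsig\>$ exactly, via the Laplace/contour representation used for spherical quadratic models. Write the uniform measure on $S_N$ as a Gaussian measure conditioned on $\|\bsig\|^2=N$, and represent the constraint by an integral over $\gamma$ on a vertical line $\Re\gamma=\gamma_0>\Lambda_1$. This gives
\[
Z_{\le 2}\propto\int e^{N\gamma}\det(\gamma\bI-\bLambda)^{-1/2}e^{\frac14\<\bu,(\gamma-\bLambda)^{-1}\bu\>}\,\de\gamma
=\int e^{N G(\gamma)}\de\gamma,
\]
and similarly for the numerator $\int\sigma_j e^{H_{\le2}}\de\mu_0$, which carries the extra factor $u_j/(2(\gamma-\Lambda_j))$. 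Take the contour through the saddle $\gamma_*$. Since $\beta_2^2<1/2$, $\Lambda_1\to\sqrt2\beta_2<1$ while $\gamma_*$ stays bounded away from the edge (as in \cite{baik2016fluctuations}, with $\bu$ small this is the high-temperature regime). Hence $G''(\gamma_*)\asymp 1$ and the steepest-descent analysis gives
\[
\<\sigma_j\>_{\le2}=\frac{u_j}{2(\gamma_*-\Lambda_j)}\bigl(1+O(N^{-c})\bigr).
\]
The relative error comes from the $O(1/N)$ Laplace corrections and the $\gamma$-fluctuations of width $N^{-1/2}$. More generally, the same computation gives the joint law of $\bsig$ under $\mu_{\le2}$ as a mixture over $\gamma$ near $\gamma_*$ of Gaussians $\cN\bigl((\gamma-\bLambda)^{-1}\bu/2,\ (2(\gamma-\bLambda))^{-1}\bigr)$. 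This yields quadratic-model moment formulas such as $\<\sigma_j\sigma_i^2\>$ and $\<\sigma_j\sigma_k\sigma_l\>$, with explicit errors.

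Next we perturb in $H_{\ge3}$. Write
\[
\<\sigma_j\>_\mu=\frac{\<\sigma_j e^{H_{\ge3}}\>_{\le2}}{\<e^{H_{\ge3}}\>_{\le2}}.
\]
We split $H_{\ge3}=H_3+H_{\ge4}$ and expand to first order in the cubic term: $\<\sigma_j\>_\mu\approx\<\sigma_j\>_{\le2}+\mathrm{Cov}_{\le2}(\sigma_j,H_3)+R$. In the Gaussian approximation,
\[
\mathrm{Cov}(\sigma_j,\<\bW^{(3)},\bsig^{\otimes3}\>)\approx 3\sum_i W^{(3)}_{jii}\,\mathrm{Var}(\sigma_j)\mathrm{Var}(\sigma_i)=\frac{3}{4(\gamma_*-\Lambda_j)}\sum_i\frac{W^{(3)}_{jii}}{\gamma_*-\Lambda_i},
\]
using $\mathrm{Var}(\sigma_i)=1/(2(\gamma_*-\Lambda_i))$. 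This is exactly the second term of $\hm_j$. The remaining contributions are of three kinds. First, terms with $\bu$-dependent means are smaller because $\|\bu\|\le N^{c_0}$. Second, off-diagonal Wick terms $\sum_{k\ne l}W^{(3)}_{jkl}(\ldots)$ involve the non-Gaussian corrections from the spherical constraint. Third, there are the $\gamma$-mixture corrections. Each of these must be bounded by $N^{-1/2-c}$ with probability $1-N^{-c}$. We do this by computing second moments over the disorder $\bW^{(3)}$, which is independent of $\bW^{(2)}$ and rotation invariant, so that it remains Gaussian in the eigenbasis.

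Higher-order terms are handled by a cumulant (cluster) expansion. The key input is that under \eqref{eq:replica-symmetry} the model is replica symmetric with overlap concentrated near $0$ (as in \cite{huang2023constructive,talagrand2006spherical}). This allows a second-moment / interpolation bound: for each $p\ge3$, $\mathrm{Cov}_{\mu}(\sigma_j,H_p)$ beyond the explicit term is $O(N^{-1/2-c})$. Concretely, we use Gaussian integration by parts in $g$. It gives
\[
\bbE\,\<\sigma_j\>_\mu^{2}=\text{explicit}+\bbE\,\bigl\<\sigma^1_j\sigma^2_j\,\xi'_{\ge3}\text{-type overlap terms}\bigr\>,
\]
and replica overlaps $\<\bsig^1,\bsig^2\>_N=O(N^{-1/2})$ make the quartic and higher contributions $O(N^{-1})$ per coordinate squared, with the cubic cross term matching $\hm$. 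Note also that the $H_p$ with $p\ge4$ have no single-coordinate linear response at leading order, since $\bbE$ over odd products vanishes and the even ones are suppressed by overlaps.

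The main obstacle is this last step: a coordinatewise error of $o(N^{-1/2})$ is below the natural fluctuation scale. Crude bounds from overlap concentration, which give $\|\<\bsig\>-\hbm\|^2=o(N)$, are insufficient. I would first try the approach above: an exact contour representation conditioned on $\bW^{(2)}$, combined with a second-moment computation over $(g_{p})_{p\ge3}$ using replicas and overlap concentration at rate $N^{-1/2}$, together with symmetry among coordinates to pass from the averaged estimate to each fixed $i$. The factor $(1+O(N^{-c}))$ in the statement reflects the replacement of the random normalization, that is, the fluctuation of the effective $\gamma$ under the full measure, by $\gamma_*$.
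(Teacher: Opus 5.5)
\textbf{Verdict.} Your first-order formula is the right answer, but there is a genuine gap in how you get there: $H_3$ is not a small perturbation of $\mu_{\le2}$, and the remainders you plan to bound one at a time are not small.

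\textbf{Why the perturbative route fails.} Under the quadratic measure, $H_3(\bsig)$ fluctuates on scale $\sqrt N$ (its variance is $\asymp\beta_3^2N$). So linearizing $\langle\sigma_j e^{H_{\ge3}}\rangle_{\le2}/\langle e^{H_{\ge3}}\rangle_{\le2}$ in $H_3$ is not justified. Already at third order in the cubic disorder, in your own Gaussian approximation with $v_i=1/(2(\gamma_*-\Lambda_i))$, there are terms of order $N^{-1/2}$. Up to combinatorial constants, two examples are $v_j\sum_b W^{(3)}_{jbb}v_b^2\sum_{d,e}(W^{(3)}_{bde})^2v_dv_e$ and $v_j^2\big(\sum_{b,b'}(W^{(3)}_{jbb'})^2v_bv_{b'}\big)\sum_e W^{(3)}_{jee}v_e$. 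In both, the inner double sums concentrate at a nonzero constant of order $\beta_3^2$. The spherical ($\gamma$-mixture) corrections are of the same order.

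Since the true answer is the first-order term alone, these contributions must \emph{cancel}. A plan that bounds each of them by $N^{-1/2-c}$ cannot work.

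Three further problems:
\begin{itemize}
\item Your replica step has the same issue quantitatively. Contributions of $O(N^{-1})$ per coordinate squared give an error of $O(N^{-1/2})$, which is the scale of the statement, not below it.
\item A second moment over all of $(g_p)_{p\ge3}$ cannot identify $\langle\sigma_j\rangle$ with $\hm_j$. The latter is itself a random variable of size $N^{-1/2}$ built from the same disorder.
\item Eigenbasis coordinates are not exchangeable. An averaged bound therefore cannot be transferred to each fixed $i$ by symmetry.
\end{itemize}

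\textbf{The missing idea: partial annealing around one coordinate.} This is what performs the resummation in the paper.
\begin{itemize}
\item Write $\langle\sigma_j\rangle$ as a one-dimensional integral in $\sigma_j$ against the slice partition function on $\bsig_{-j}$.
\item Anneal only the cubic disorder containing no index $j$, together with $H_{\ge4}$. On the sphere this produces the explicit factor $U(\sigma_j)=\frac N2\xi_{\ge4}(1)+\frac N2\beta_3^2(1-\sigma_j^2/N)^3$.
\item Keep the disorder containing $j$ quenched. Entries with $j$ appearing once become a matrix perturbation $\sigma_j\tilde{\bG}/N$ of the slice quadratic form. The slice model is then solved exactly by the Laplace transform.
\item Expanding the log-determinant gives the linear coefficient $h_j$, hence $\hm_j$. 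It also gives a quadratic term $D_2\sigma_j^2/4\approx\frac32\beta_3^2\sigma_j^2$, which cancels the $-\frac32\beta_3^2\sigma_j^2$ coming from $U$. This is exactly the cancellation your expansion lacks.
\end{itemize}

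\textbf{Justifying the annealing.} Lemma~\ref{lem:anneal} controls the annealing error at precision $o(N^{-1/2})$ using a two-replica second moment over the annealed disorder only, conditional on the quenched part. So $\hm_j$ is never averaged. This also requires a parity gain beyond overlap concentration:
\begin{itemize}
\item $\sigma_j$ is odd.
\item The slice density $f$ satisfies $|f(s)-f(-s)|\le N^{Cc_0}(|u_j|+N^{-1/2})(1+|s|)(f(s)+f(-s))$.
\item The slice overlap moments are even in $\sigma_j$ up to $|\sigma_j|/\sqrt N$, via \eqref{eq:quadratic-moment-derivatives}.
\end{itemize}
Together these improve the leading error from order $N^{-1}\overline Z^2$ to $N^{Cc_0}(N^{-1}|u_j|^2+N^{-2})\overline Z^2$.
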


Together with further estimates, we will use Theorem \ref{lem:spin}
to prove the following theorem, which is the main result of the paper.
\begin{thm}
\label{lem:correction-band}
Let $\alpha \ge 2$. There exists $c_0>0$ such that, for $\|\bu\|\le N^{c_0}$, and under the additional assumptions above,
we have for some $c>0$ that
\begin{align}
\bbE\left[\left\|\int\bsig\,\mu(\de\bsig)-\hbm\right\|_{N}^{\alpha}  \right] = O(N^{-\alpha/2-c}).
\end{align}
\end{thm}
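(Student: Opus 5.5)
Writing $\bm := \int\bsig\,\mu(\de\bsig)$, I would go from the coordinatewise statement of Theorem~\ref{lemma:LocalEffective1} to the vector moment bound in three steps: bound the total error on a good event, bound it crudely on the bad event, and combine the two.

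\textbf{Step 1: good event.} Let $\mathcal{G}$ be the event on which all of the following hold:
\begin{itemize}
\item the spectrum of $\bW^{(2)}$ satisfies $\Lambda_1\le \beta_2\sqrt2+\delta$;
\item $\gamma_*-\Lambda_1\ge c$, which follows from $\xi''(0)<1$ and $\|\bu\|\le N^{c_0}$;
\item the $\ell_\infty$-type norms of $\bW^{(3)}$ contracted against resolvent vectors are bounded by $N^{-1/2+\epsilon}$.
\end{itemize}
On $\mathcal{G}$, both $\bm$ and $\hbm$ have $\|\cdot\|_N=O(N^{-1/2+\epsilon})$. Indeed, $\sum_i W^{(3)}_{jii}/(\gamma_*-\Lambda_i)$ is a Gaussian of variance $O(1/N)$ conditionally on $\bLambda$, since $\bW^{(3)}$ is rotation invariant and independent of $\bW^{(2)}$.

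\textbf{Step 2: pass from coordinates to the vector.} Theorem~\ref{lemma:LocalEffective1} gives, for each fixed $i$, that $m_i-\hm_i=O(N^{-c}|\hm_i|+N^{-1/2-c})$ with probability $1-N^{-c}$. A union bound over $i$ fails directly, so I would use exchangeability instead:
\[
\bbE\|\bm-\hbm\|_N^2=\frac1N\sum_i\bbE(m_i-\hm_i)^2.
\]
Each term splits according to whether the coordinatewise event holds. On it the contribution is $O(N^{-1-2c}+N^{-2c}\bbE\hm_i^2)=O(N^{-1-2c})$. On its complement I would use Cauchy--Schwarz with a high-moment a priori bound $\bbE (m_i-\hm_i)^4\lesssim N^{-2+\epsilon}$, giving a contribution of $N^{-c/2}\cdot N^{-1+\epsilon}$. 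That is $o(N^{-1})$ once $\epsilon<c/2$.

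The a priori bound itself comes from two facts. First, $m_i$ in a rotated basis has Gaussian-type tails at scale $N^{-1/2}$. This follows from concentration of the spherical Gibbs measure in the replica-symmetric phase, since $\langle\bsig^1,\bsig^2\rangle_N$ is small with overwhelming probability, so $\|\bm\|_N^2=\int\langle\bsig^1,\bsig^2\rangle_N\,\mu^{\otimes2}=O(N^{-1+\epsilon})$. Second, $\hm_i$ is explicitly Gaussian conditionally on $\bLambda$.

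\textbf{Step 3: higher $\alpha$.} For $\alpha>2$ I would interpolate between the second-moment bound and a crude high-moment bound. Hölder gives
\[
\bbE X^\alpha\le (\bbE X^2)^{\theta}(\bbE X^{p})^{1-\theta}, \qquad X=\|\bm-\hbm\|_N,
\]
with $p$ large. Since $\bbE X^p\le N^{-p/2+\epsilon p}$ by Step 1 together with $\|\bm\|_N\le1$ on $\mathcal{G}^c$, and $\mathbb P(\mathcal{G}^c)$ is superpolynomially small, the interpolation yields $N^{-\alpha/2-c'}$ after choosing $\epsilon$ small relative to $c$.

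\textbf{Main obstacle.} The hardest step is the a priori bound $\bbE\|\bm\|_N^{p}\lesssim N^{-p/2+\epsilon p}$ with high moments, uniformly over the disorder. It requires overlap concentration for the tilted measure, $\bbE\mu^{\otimes2}(|\langle\bsig^1,\bsig^2\rangle_N|>N^{-1/2+\epsilon})\le N^{-D}$, under \eqref{eq:replica-symmetry}. I would try to prove this first, via a second-moment or Franz--Parisi-type computation showing that the constrained free energy at overlap $q$ is strictly less than at $q=0$ by the margin $-(\xi(q)+q+\log(1-q))$, combined with Gaussian concentration of free energies. Near $q=0$ the quadratic margin $(1-\xi''(0))q^2/2$ gives the $N^{-1/2+\epsilon}$ scale.
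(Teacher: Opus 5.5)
Your outer structure matches the paper's proof: the coordinatewise event of Theorem~\ref{lemma:LocalEffective1}, an a priori bound off that event, summation over coordinates, crude bounds on an exceptional event, and H\"older for $\alpha>2$. The gap is the a priori bound, which carries essentially all the difficulty, and your route to it fails for two reasons.

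\textbf{The overlap step does not give the claimed scale.} Overlap concentration at scale $N^{-1/2+\epsilon}$ does not give $\|\bm\|_N^2=O(N^{-1+\epsilon})$. Since $\|\bm\|_N^2=\mu^{\otimes2}(\langle\bsig^1,\bsig^2\rangle_N)$ is the \emph{mean} of the overlap, bounding it by the overlap's typical size yields only $\|\bm\|_N^2\lesssim N^{-1/2+\epsilon}$, i.e.\ $\|\bm\|^2\lesssim N^{1/2+\epsilon}$. Even the second moment $\mu^{\otimes2}(\langle\bsig^1,\bsig^2\rangle_N^2)\approx N^{-1}$ gives only $\|\bm\|^2\lesssim\sqrt N$. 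The true size $\|\bm\|^2\le N^{\epsilon}$ comes from a cancellation in the signed overlap that no bound on $|\langle\bsig^1,\bsig^2\rangle_N|$ can detect.

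\textbf{A norm bound would not suffice anyway.} Your Cauchy--Schwarz step needs $\bbE(m_i-\hm_i)^4\lesssim N^{\epsilon}(|u_i|+N^{-1/2})^4$ for each eigen-coordinate separately; note the $u_i$-dependence, which your $N^{-2+\epsilon}$ omits. The eigenbasis of $\bW^{(2)}$ is not independent of $\bm$, and the coordinates are not exchangeable: $m_i\approx u_i/(2(\gamma_*-\Lambda_i))$ plus a cubic term of coordinate-dependent variance. So the ``rotated basis'' argument does not transfer, and a norm bound gives only $|m_i|\le N^{\epsilon/2}$. Let $G_i$ be the coordinatewise event. With only $\|\bm-\hbm\|^2\le N^{\epsilon}$, the term $\frac1N\sum_i\bbE[(m_i-\hm_i)^2\bfone_{G_i^c}]$ can be as large as $N^{-1+\epsilon}$, because the bad events may fall exactly on the coordinates carrying the error.

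Two smaller points. Your event $\mathcal G$ is defined purely through the disorder, so it cannot by itself bound $\bm$. On $\mathcal G^c$ you also need a crude bound on $\hbm$, not just on $\bm$; the paper uses $(\gamma_*-\Lambda_i)^{-1}\le 2N$ and Gaussian moments of the cubic coefficients.

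\textbf{The missing ingredient} is a per-coordinate high-moment estimate, which is \eqref{eq:Magn-No-Field}. The paper proceeds as follows.
\begin{itemize}
\item It computes $\bbE_{\ge3}\big[\big(\int_T\sigma_i e^{H(\bsig)}\mu_0(\de\bsig)\big)^{2k}\big]$ with $2k$ replicas, restricted to typical, nearly orthogonal tuples (Lemmas~\ref{lem:key-error}, \ref{lem:OrthogonalTuples}, \ref{lemma:A-T-comparison}).
\item It Taylor-expands $\exp\big(N\sum_{a<b}\xi_{\ge3}(\langle\bsig^a,\bsig^b\rangle_N)\big)$ and applies Lemma~\ref{lem:laplace-moments}. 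Since $\prod_a\sigma_i^a$ is odd in every replica, each replica must be matched either to a factor $u_i$ or to an overlap factor. This gives the scale $(|u_i|+N^{-1/2})^{2k}(\bbE_{\ge3}Z(T))^{2k}$.
\item Combined with the concentration \eqref{eq:HighAcc-PartitionFun} of $Z$, Markov's inequality with $k$ large and a union bound over $i$ give $|\langle\sigma_i\rangle|\le N^{\epsilon/4}(|u_i|+N^{-1/2})$ for all $i$ simultaneously, with probability $1-N^{-C}$ (Lemma~\ref{lem:high-prob}).
\end{itemize}
This yields the event $\cE_D$ used in the paper's proof. If you replace your overlap argument with this input, your Steps~2--3 go through essentially as in the paper.
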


\begin{rmk}[Uniformity in the mixture]\label{rmk:uniform-mixtures}
Fix $\kappa,M>0$ and $R>1$, and consider mixtures satisfying
\begin{align}\label{eq:uniform-mixtures}
\sum_{p\ge2}\beta_p^2R^p\le M,\qquad
\xi(s)+s+\log(1-s)\le-\kappa s^2\quad (0<s<1).
\end{align}
The constants and thresholds for $N$ in \cref{lem:spin,lem:correction-band} and the lemmas below can be chosen depending only on $\kappa,M,R$ and the moment parameter $\alpha$.
Indeed, the spectral and replica-symmetry gaps and the fixed-order derivative bounds are uniform. Compactness of this class in $C^2([-1,1])$ and Gaussian interpolation also make the free energy convergence used in \cref{lem:truncate} uniform. The remaining estimates use only these bounds.
\end{rmk}

The rest of this paper is devoted to the proof of Theorem~\ref{lemma:LocalEffective1} and
Theorem \ref{lem:correction-band}.

\subsection{Overview of the proof}

The first step of our proof is to give accurate control of quadratic models. In particular, using the Laplace transform and saddle-point calculations, we show both moment bounds and arbitrarily high-precision expansions of moments of overlaps in models with only linear and quadratic terms in the Hamiltonian; see Lemmas \ref{lem:laplace}, \ref{lem:laplace-moments}, \ref{lem:laplace-moments-2}.

The next important step is a delicate high-moment calculation showing that the contribution from terms of degree at least $3$ in the Hamiltonian to the partition function can be replaced by its annealed version; see Lemma \ref{lem:high-acc-anneal}. For this moment calculation to work in the replica-symmetric regime (\ref{eq:replica-symmetry}), we need to employ some additional spin glass ideas. In particular, we first show that we can truncate to the ``typical'' set of $\bsig$ where the partition function restricted to $\bsig'$ far from orthogonal to $\bsig$ is exponentially small; see (\ref{eq:TypicalDef}) and Lemma \ref{lem:truncate}. We then show that, in computing moments of the partition function, we can truncate the integral to collections of replicas that are all typical and nearly orthogonal to each other; see Lemmas \ref{lem:key-error}, \ref{lem:OrthogonalTuples}, \ref{lemma:A-T-comparison}. After averaging over the high-degree terms in the Hamiltonian, within this region we can apply Taylor's expansion to the exponential terms involving the overlaps, and appeal to the accurate moment estimates in suitable quadratic models.

Finally, to obtain the magnetization estimate, we work in the eigenbasis of the quadratic part of the Hamiltonian, and estimate the average under the Gibbs measure of each spin, say $\sigma_1$. We use the inputs above to anneal over the contribution of terms of degree larger than $3$ in the Hamiltonian as well as cubic terms $g_{i,j,k}\sigma_i\sigma_j\sigma_k$ with $i,j,k\ne 1$; see Lemma \ref{lem:anneal}. The leading remaining contribution can be represented as an integral over $\sigma_1$ of the partition function of a suitable quadratic Hamiltonian on $\bsig_{-1}=(\sigma_2,\dots,\sigma_N)$. We again appeal to the accurate quadratic calculation, together with local expansions around the saddle point, to obtain the desired estimate of $\langle\sigma_1\rangle$ in Lemma \ref{lem:spin}. Combining this estimate with the high-probability bound in Lemma \ref{lem:high-prob} gives the high-moment estimate in Lemma \ref{lem:correction-band}.

\section{Quadratic Hamiltonians}
We begin by proving
several supporting lemmas about quadratic models. The Laplace transform allows us to compute accurately various statistics of quadratic models. We note that the use of Laplace transforms in studying spherical quadratic models has been utilized before, for example for analyzing the fluctuation of the free energy in \cite{baik2016fluctuations}. We will, however, need accurate control over a number of statistics beyond the free energy.

\begin{lem}
\label{lem:laplace}
For $\bA\in\bbR^{N\times N}$ a GOE matrix scaled by $\alpha < 1/2$, and $\bu \in \bbR^N$ such that $\|\bu\|^2 \le \eps N$ for $\eps>0$ depending only on $1/2-\alpha$,
there exists a constant $c>0$ such that, defining $G(\gamma) = G(\gamma;\bA)$ and
$\gamma_*= \gamma_*(\bA)$, we have that the following claim holds with probability at least $1-\exp(-cN)$:
\begin{align}
\int e^{\langle\bsig,\bA\bsig\rangle+\langle \bu,\bsig\rangle} \mu_0(\de\bsig)
=(1+O(N^{-c}))\sqrt{\frac{2}{G''(\gamma_*)}} \, (2e)^{-N/2}\, e^{NG(\gamma_*)},\label{eq:laplace-1}
\end{align}
and, for $\bv_k$ any eigenvector of $\bA$ (uniformly over $k$)
\begin{equation}
\frac{\int\<\bv_k,\bsig\> e^{\<\bsig,\bA\bsig\>+\< \bu,\bsig\>}\,\mu_0(\de\bsig)}{\int e^{\<\bsig,\bA\bsig\>+\< \bu,\bsig\>}\,\mu_0(\de\bsig)}=(1+O(N^{-c}))
\frac{\<\bv_k,\bu\>}{2(\gamma_{*}-\lambda_k(\bA))}\, .\label{eq:laplace-mean}
\end{equation}
\end{lem}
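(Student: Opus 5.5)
Diagonalize $\bA$ and write $\bsig$ in its eigenbasis, so $\<\bsig,\bA\bsig\>=\sum_k\lambda_k\sigma_k^2$ and $\<\bu,\bsig\>=\sum_k u_k\sigma_k$ with $u_k=\<\bv_k,\bu\>$. The plan is to convert the spherical integral into a Gaussian integral via the Laplace transform in the radius, and then evaluate the resulting one-dimensional contour integral by a saddle point at $\gamma_*$.

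\textbf{Contour representation.} Let $Z(r)=\int_{\|\bx\|^2=r}e^{\<\bx,\bA\bx\>+\<\bu,\bx\>}\,\de S(\bx)$ be the unnormalized surface integral on the sphere of radius $\sqrt r$. For $\Re\gamma>\lambda_{\max}$, the Laplace transform in $r$ is an explicit Gaussian integral:
\[
\int_0^\infty e^{-\gamma r}Z(r)\,\de r=\int_{\bbR^N}e^{-\<\bx,(\gamma\bI-\bA)\bx\>+\<\bu,\bx\>}\de\bx=\pi^{N/2}\det(\gamma\bI-\bA)^{-1/2}e^{\frac14\<\bu,(\gamma\bI-\bA)^{-1}\bu\>}.
\]
Inverting (Bromwich) at $r=N$, after the rescaling $\gamma\mapsto$ per-coordinate normalization that matches the definition of $G$, gives
\[
Z(N)=\frac{\pi^{N/2}}{2\pi i}\int_{\gamma_*-i\infty}^{\gamma_*+i\infty}\exp\Big(N\Big[\gamma-\tfrac1{2N}\log\det(\gamma\bI-\bA)+\tfrac1{4N}\<\bu,(\gamma\bI-\bA)^{-1}\bu\>\Big]\Big)\de\gamma=\frac{\pi^{N/2}}{2\pi i}\int e^{NG(\gamma)}\de\gamma .
\]
Dividing by the surface area $2\pi^{N/2}N^{(N-1)/2}/\Gamma(N/2)$ and applying Stirling's formula produces the prefactor $(2e)^{-N/2}$ together with the polynomial factors that will combine into $\sqrt{2/G''(\gamma_*)}$. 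The numerator in \eqref{eq:laplace-mean} is handled the same way. Differentiating the Gaussian integral in $u_k$ brings down $u_k/(2(\gamma-\lambda_k))$, so the numerator is the same contour integral with this extra factor inside.

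\textbf{Saddle point.} On the vertical line $\gamma=\gamma_*+it$, we have $\Re G(\gamma_*+it)-G(\gamma_*)=-\frac1{4N}\sum_k\log\big(1+t^2/(\gamma_*-\lambda_k)^2\big)+(\text{$\bu$ term})$. The $\bu$ term is $\frac1{4N}\sum_k u_k^2\,\Re\big[(\gamma_*-\lambda_k+it)^{-1}-(\gamma_*-\lambda_k)^{-1}\big]\le0$. Hence the modulus of the integrand is maximized at $t=0$ and decays like $\prod_k(1+t^2/(\gamma_*-\lambda_k)^2)^{-1/4}$, which is integrable for large $N$.

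Near $t=0$ we Taylor expand to get $NG(\gamma_*)-\frac N2G''(\gamma_*)t^2+O(N|t|^3)$. Integrating over $|t|\le N^{-1/2+\delta}$ yields the Gaussian factor $\sqrt{2\pi/(NG''(\gamma_*))}$ with relative error $O(N^{-c})$, and the remaining range contributes $e^{-cN^{2\delta}}$ relatively. For the mean, the factor $u_k/(2(\gamma-\lambda_k))$ varies by a relative amount $O(|t|/(\gamma_*-\lambda_k))$ over the window, which gives \eqref{eq:laplace-mean}. Uniformity in $k$ follows because the error bound depends only on $\gamma_*-\lambda_1$.

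\textbf{Main obstacle: the spectral gap $\gamma_*-\lambda_{\max}\ge c$.} All of the above needs $G'',G'''$ to be of order one, which requires $\gamma_*$ to stay a constant distance above the spectral edge. Without $\bu$, $G'(\gamma)=1-\frac1{2N}\sum_k(\gamma-\lambda_k)^{-1}$, and the semicircle Stieltjes transform at the edge $2\alpha$ equals $1/\alpha>2$. Hence the root of $G'$ sits at a constant distance beyond the edge precisely because $\alpha<1/2$. A field with $\|\bu\|^2\le\eps N$ adds $-\frac1{4N}\sum_k u_k^2(\gamma-\lambda_k)^{-2}=O(\eps)$ to $G'$ on that region, so for small $\eps$ the root moves only slightly.

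With probability $1-e^{-cN}$, large deviations for the GOE spectrum give $\lambda_{\max}\le2\alpha+\eta$ and closeness of the empirical Stieltjes transform to its limit on this region. On that event, $G''(\gamma_*)\asymp1$ and all higher derivatives are bounded near $\gamma_*$. I would also need to check the tail integrability of the integrand for large $|t|$ uniformly on this event; that check is routine.
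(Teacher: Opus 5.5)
Your proposal follows the paper's proof essentially step for step. Both arguments use the Laplace transform in the squared radius to reach a Gaussian integral, invert along the vertical line through $\gamma_*$, use the same real-part bound (including the sign of the field term) to localize to $|t|\lesssim N^{-1/2+\delta}$, then apply Taylor expansion and Stirling, with the extra factor $u_k/(2(\gamma-\lambda_k))$ for the mean; your Stieltjes-transform argument ($m_\alpha(2\alpha)=1/\alpha>2$) usefully justifies the spectral gap that the paper only asserts.

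One bookkeeping fix is needed. In the variable $r=\|\bx\|^2$ one has $\de\bx=\de S\,\de r/(2\sqrt r)$, so it is $Z(r)/(2\sqrt r)$, not $Z(r)$, whose Laplace transform equals the Gaussian integral; this is what the paper's $\ell^{N/2-1}$ weight encodes. Without this factor your prefactor comes out off by $2\sqrt N$ instead of combining into $\sqrt{2/G''(\gamma_*)}$.
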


\begin{proof}
By a change of basis, we can assume that $\bA=\bLambda$ is diagonal (and its entries ordered).
Let
\begin{align}
E(\ell):=\ell^{N/2-1}
\int\exp\Big(\langle \bu,\bsig\rangle\sqrt{\ell}+\langle\bLambda,\bsig^{\otimes 2}\rangle\ell\Big)\,
\mu_0(\de\bsig)\, .
\end{align}
Then the Laplace transform of $E$ is given by
\[
F(t)=\int_{0}^{\infty} e^{-t\ell}\, E(\ell)\,\de\ell,
\]
and one has (for $\Re(\gamma)>\Lambda_1=\max_{i\le N}\Lambda_i$)
\[
E(\ell)=\frac{1}{2\pi i}\int_{N\gamma-i\infty}^{N\gamma+i\infty} e^{t\ell}\, F(t)\, \de t.
\]
We evaluate, for $\Re(t)>\Lambda_1$,
\begin{align*}
F(Nt) & =\int_{0}^{\infty}\int e^{-Nt\ell}\exp\Big(\langle \bu,\bsig\rangle\sqrt{\ell}+\langle\bLambda,\bsig^{\otimes 2}\rangle\ell\Big)\ell^{N/2-1}\mu_0(\de\bsig)\,  \de\ell\\
 & =\frac{\Gamma(N/2)}{(N\pi)^{N/2}}\int_{\bbR^{N}}\exp\Big(-t\|\by\|^{2}+\langle \bu,\by\rangle+\langle\bLambda,\by^{\otimes 2}\rangle\Big)\, \de\by\\
 & =\frac{\Gamma(N/2)}{N^{N/2}}\exp\left\{-\frac{1}{2}\sum_{i=1}^{N}\log(t-\Lambda_{i})+\sum_{i=1}^{N}\frac{u_{i}^{2}}{4(t-\Lambda_{i})}\right\}
 \, .
\end{align*}
Hence, by the inverse Laplace transform, for all $\gamma \in \bbR$, $\gamma > \max_{i\le N} \Lambda_i$,
\begin{align}
E(1) & =\frac{N}{2\pi i}\int_{\gamma-i\infty}^{\gamma+i\infty} e^{Nt}\, F(Nt)\,\de t\nonumber\\
 & =\frac{\Gamma(N/2)}{2\pi N^{N/2-1}}\int_{-\infty}^{\infty}\exp\big(NG(\gamma+iz)\big)\de z,\label{eq:E1Formula}
\end{align}
where $G(x) = G(x;\bLambda)$ is defined as per Eq.~\eqref{eq:GDef}.

Let $\gamma_{*}$ be defined as per Eq.~\eqref{eq:GammaDef}.
Per the discussion below \eqref{eq:GammaDef}, $\gamma_*$ is the unique solution to $G'(\gamma_*) = 0$.
Explicitly,
\[
N-\frac{1}{2}\sum_{i=1}^{N}\frac{1}{\gamma_{*}-\Lambda_{i}}-\sum_{i=1}^{N}\frac{u_{i}^{2}}{4(\gamma_{*}-\Lambda_{i})^{2}}=0.
\]
Our assumption on $\alpha$ and $\|\bu\|$ implies that $\gamma_* - \max_i \bLambda_i > \delta$ for an appropriate $\delta$ depending only on $1/2-\alpha$.
We will set $\gamma=\gamma_*$ in Eq.~\eqref{eq:E1Formula}. Note
that
\[
\Re(G(\gamma_{*}+iz)-G(\gamma_{*}))=-\frac{1}{4N}\sum_{i=1}^{N}\log(1+z^{2}/(\gamma_{*}-\Lambda_{i})^{2})-\frac{1}{4N}\sum_{i=1}^{N}\frac{u_{i}^{2}z^{2}}{(\gamma_{*}-\Lambda_{i})(z^{2}+(\gamma_{*}-\Lambda_{i})^{2})}.
\]
For $|z|\in((\log N)/\sqrt{N},1)$, we have $\Re(G(\gamma_{*}+iz)-G(\gamma_{*}))<-cz^{2}$,
and for $|z|\ge1$, we have $\Re(G(\gamma_{*}+iz)-G(\gamma_{*}))<-c\log(1+cz^{2})$.
This implies that
\[
\left|\int_{|z|>\frac{\log N}{\sqrt{N}}}\exp\Big(NG(\gamma_{*}+iz)-NG(\gamma_{*})\Big)\, \de z\right|< e^{-c(\log N)^{2}}\, .
\]
On the other hand, for $|z|\le(\log N)/\sqrt{N}$ we use the
Taylor expansion:
\begin{align*}
G(\gamma_{*}+iz)&=G(\gamma_{*})+\sum_{j=1}^{k}\frac{(iz)^{j}}{j!}G^{(j)}(\gamma_{*})+\Err_{N,k+1}\, ,\\
\Err_{N,k+1} & \le \frac{C}{k!}
\left(\frac{\log N}{\sqrt{N}}\right)^{k}\sup_{|z|\le(\log N)/\sqrt{N}}\big|G^{(k)}(\gamma_{*}+iz)\big|\, .
\end{align*}
We have that
\[
G^{(1)}(z)=1-\frac{1}{2N}\sum_{i=1}^{N}\frac{1}{z-\Lambda_{i}}-\frac{1}{4N}\sum_{i=1}^{N}\frac{u_{i}^{2}}{(z-\Lambda_{i})^{2}},
\]
\[
G^{(j)}(z)=\frac{(-1)^{j}(j-1)!}{2N}\sum_{i=1}^{N}\frac{1}{(z-\Lambda_{i})^{j}}+\frac{(-1)^{j}j!}{4N}\sum_{i=1}^{N}\frac{u_{i}^{2}}{(z-\Lambda_{i})^{j+1}}.
\]
In particular, with probability $1-\exp(-cN)$ over $\bA$, $\sup_{|z|\le(\log N)/\sqrt{N}}|G^{(j)}(\gamma_{*}+iz)|\le j!C^{j}$
for a finite constant $C>0$ as long as $\|\bu\|^{2}\le N$. Hence,
we have (for $\ell_N:=(\log N)/\sqrt{N}$ and $J_N:=[-\ell_N,\ell_N]$)
\begin{align*}
 \int_{J_N}\exp\Big(N G(\gamma_{*}+iz)-N G(\gamma_{*})\Big)\, \de z
 & =\int_{J_N} e^{- N G^{(2)}(\gamma_{*}) z^{2}/2}\,
 \exp\left(O(N\ell_N^3)\right)\, \de z\\
 & =\sqrt{\frac{2\pi}{NG^{(2)}(\gamma_{*})}}\lt(1+O(N^{-1/2+\eps})\rt).
\end{align*}
Together with Eq.~\eqref{eq:E1Formula}, we get
\begin{align*}
E(1) = \frac{\Gamma(N/2)}{2\pi N^{N/2-1}} e^{NG(\gamma_*)}
\left\{\sqrt{\frac{2\pi}{NG^{(2)}(\gamma_{*})}}+O(N^{-1+\eps})\right\}\, ,
\end{align*}
which yields \eqref{eq:laplace-1} by Stirling's formula.

By a similar argument, we obtain the integral of the spin
$\sigma_k = \langle \bv_k,\bsig\rangle$ (recall we are working in the basis in which $\bA$ is diagonal). Let
\[
E_{k}(\ell):=\ell^{N/2-1}\int \sqrt{\ell}\sigma_{k}
\exp\Big(\langle \bu,\bsig\rangle\sqrt{\ell}+\langle\bLambda,\bsig^{\otimes 2}\rangle\ell\Big)\,
\mu_0(\de\bsig).
\]
Then the Laplace transform can be evaluated as
\begin{align*}
F_{k}(Nt) & =
\int_{0}^{\infty}\int e^{-Nt\ell} \sqrt{\ell}\sigma_{k} \exp\Big(\langle \bu,\bsig\rangle\sqrt{\ell}+\langle\bLambda,\bsig^{\otimes 2}\rangle\ell\Big)\ell^{N/2-1}\mu_0(\de\bsig)\,  \de\ell \\
&=\frac{\Gamma(N/2)}{(N\pi)^{N/2}}\int_{\bbR^{N}}y_{k}\exp\left(-t\|\by\|^{2}+\langle \bu,\by\rangle+\langle\bLambda,\by^{\otimes 2}\rangle\right) \de \by\\
 & = \frac{\Gamma(N/2)}{N^{N/2}} \frac{u_{k}}{2(t-\Lambda_{k})}
 \exp\left\{-\frac{1}{2}\sum_{i=1}^{N}\log(t-\Lambda_{i})+\sum_{i=1}^{N}\frac{u_{i}^{2}}{4(t-\Lambda_{i})}\right\} .
\end{align*}
Then we apply the same strategy as for computing $E(1)$. By inverse Laplace transform:
\begin{align}
E_{k}(1)=\frac{\Gamma(N/2)}{2\pi N^{N/2-1}}u_k\, \int_{-\infty}^{\infty}
\frac{1}{2(\gamma_*+iz-\Lambda_k)}\exp\big(NG(\gamma_*+iz)\big)\de z,\label{eq:E1_kFormula}
\end{align}
We make a negligible error in restricting to $J_N:=[-\ell_N,\ell_N]$
(for $\ell_N:=(\log N)/\sqrt{N}$)
\begin{align*}
E_{k}(1)&=\frac{\Gamma(N/2)}{2\pi N^{N/2-1}}u_k\, \left\{\int_{J_N}
\frac{1}{2(\gamma_*+iz-\Lambda_k)}\exp\big(NG(\gamma_*+iz)\big)\de z +O(e^{NG(\gamma_*)-c(\log N)^2})\right\}\\
&=\frac{\Gamma(N/2)}{2\pi N^{N/2-1}}u_k\,e^{NG(\gamma_*)} \left\{
\sqrt{\frac{2\pi}{NG^{(2)}(\gamma_{*})}}\frac{1}{2(\gamma_*-\Lambda_k)} +O(N^{-1+\eps})\right\}\, .
\end{align*}
Comparing with the above, we get
\[
E_{k}(1)=(1+O(N^{-c}))\frac{u_{k}}{2(\gamma_{*}-\Lambda_{k})}\int e^{\langle\bsig,\bA\bsig\rangle+\< \bu,\bsig\>}
\mu_0(\de\bsig).
\]
This gives (\ref{eq:laplace-mean}).
\end{proof}

\begin{lem}\label{lem:laplace-moments}
    Let $\bA\in\bbR^{N\times N}$ be a GOE matrix scaled by $\alpha < 1/2$. Assume that $\bu\in \bbR^N$ is such that $\|\bu\|\le N^{c_0}$. For $\ell\in [L]$, consider a collection of pairs of indices $(i_\ell,j_\ell)$ with $i_\ell\ne j_\ell\in [2k]$ and integers $r_\ell \ge 3$. Let $R = \sum_{\ell=1}^{L} r_\ell$. We have that the following claim holds with probability at least $1-\exp(-cN)$:

    Uniformly in $h\in [N]$,
    \begin{align}
        &\frac{\int \prod_{i=1}^{2k}\bsig_h^i\prod_{\ell=1}^{L}\<\bsig^{i_\ell},\bsig^{j_{\ell}}\>^{r_\ell}\exp\lt(\sum_{i=1}^{2k}\<\bu,\bsig^i\>+\<\bsig^i,\bLambda\bsig^i\>\rt)\mu_0^{\otimes 2k}(\de\bsig)}{\int \exp\lt(\sum_{i=1}^{2k}\<\bu,\bsig^i\>+\<\bsig^i,\bLambda\bsig^i\>\rt)\mu_0^{\otimes 2k}(\de\bsig)} \nonumber\\
        &=O_{k,L,R}\lt(\max_{a\le \min(k,L)}|u_h|^{2k-2a} N^{(R-a)/2} (1+\|\bu\|)^{2R}\rt). \label{eq:laplace-moments}
    \end{align}
\end{lem}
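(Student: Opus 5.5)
The plan is to reduce the statement to Gaussian (Wick) moment computations through the Laplace-transform representation from the proof of Lemma~\ref{lem:laplace}, and then bound the result by a combinatorial count of index coincidences.

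\textbf{Step 1: factorize over replicas.} Numerator and denominator both factor over the $2k$ replicas, since the measure $\mu_0^{\otimes 2k}$ tilted by $\sum_i(\<\bu,\bsig^i\>+\<\bsig^i,\bLambda\bsig^i\>)$ is a product of $2k$ i.i.d.\ copies of the quadratic Gibbs measure $\nu$. I expand each overlap power as
\[
\<\bsig^{i_\ell},\bsig^{j_\ell}\>^{r_\ell}=\sum_{m_1,\dots,m_{r_\ell}\in[N]}\prod_{s\le r_\ell}\sigma^{i_\ell}_{m_s}\sigma^{j_\ell}_{m_s}.
\]
The left-hand side of \eqref{eq:laplace-moments} then becomes a sum over $R$ summation indices $\bm{m}\in[N]^R$. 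Each summand is a product over $i\le 2k$ of single-replica monomial moments $\nu\big(\sigma_h\prod_{s\in S_i}\sigma_{m_s}\big)$, where $S_i$ records which overlap indices touch replica $i$.

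\textbf{Step 2: single-replica monomial moments.} I control these exactly as in the proof of \eqref{eq:laplace-mean}. For a monomial of degree $d$, I insert $\ell^{d/2}\prod\sigma_{m}$ into $E(\ell)$. After the Laplace transform, the Gaussian integral produces $F(Nt)$ multiplied by a Wick polynomial: a sum over partial pairings of the $d$ factors, where unpaired factors contribute means $u_m/(2(t-\Lambda_m))$ and pairs contribute $\delta_{mm'}/(2(t-\Lambda_m))$. I then invert along $\Re t=\gamma_*$, cut to $|z|\le(\log N)/\sqrt N$, and Taylor expand as before. Since $\gamma_*-\Lambda_1>\delta$ with probability $1-e^{-cN}$, every rational factor has modulus at most its value at $z=0$. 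This gives, uniformly in the indices,
\[
\Big|\nu\Big(\prod_{r\le d}\sigma_{m_r}\Big)\Big|\le C_d\sum_{\pi}\ \prod_{\{r,r'\}\in\pi}\delta_{m_rm_{r'}}\prod_{r\text{ unpaired}}\frac{|u_{m_r}|}{\delta},
\]
where $\pi$ ranges over partial pairings. The $O(N^{-1/2+\eps})$ corrections only add terms of the same shape with extra powers of $N^{-1/2}$, so they are dominated.

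\textbf{Step 3: combinatorial count (main obstacle).} I plug the Step 2 bound into the sum over $\bm{m}$ and exchange sums. Each term is indexed by a choice of pairing in every replica, and I bound it by a graph count. The vertices are the $R$ overlap indices plus the fixed index $h$. Within-replica pairings identify vertices, and each unpaired occurrence contributes a factor $|u_m|$. Each connected class of free indices then contributes either $N$, if it carries no unpaired factors, or $\sum_m|u_m|^{p}\le\|\bu\|^{p}$ otherwise. Classes containing $h$ contribute $|u_h|^{(\#\text{unpaired})}$.

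An overlap index $m_s$ occurs exactly twice, once in each of two distinct replicas. A class with no $u$-factors is therefore a cycle of identifications, and absorbing $e$ occurrences costs a factor $N$ for $e/2$ indices, i.e.\ at most $N^{1/2}$ per index. Identifying $\sigma_h^{i}$ with an overlap index instead of using its mean replaces a factor $|u_h|$. Each pair of such replicas $i,j$ linked through an overlap $\ell$ removes two $|u_h|$ factors, leaving a factor $|u_h|^{2k-2a}$, and forces at least one index equal to $h$, losing a factor $N^{1/2}$. The number $a$ of such pairs is at most $k$ (there are $2k$ spins) and at most $L$ (each pair uses a distinct overlap edge, or can be charged to one). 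All remaining $u$-factors are bounded by $(1+\|\bu\|)^{2R}$.

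The delicate point is to check that no configuration beats $N^{(R-a)/2}$. I expect it to come down to showing that every class of size one, i.e.\ an index appearing unpaired in both replicas, carries two $u$-factors and hence no factor $N$. Summing over the finitely many pairing patterns, whose number is bounded in terms of $k,L,R$, then yields \eqref{eq:laplace-moments}. The condition $r_\ell\ge3$ is not needed for the upper bound, only for how the lemma is later applied.
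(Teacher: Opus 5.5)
Your proposal is correct and follows essentially the paper's route: the Laplace transform turns each replica into a Gaussian with mean $\frac12(t-\bLambda)^{-1}\bu$ and covariance $\frac12(t-\bLambda)^{-1}$, Wick's formula expands the moment, and counting index coincidences yields $|u_h|^{2k-2a}N^{(R-a)/2}$. The difference is only bookkeeping: you factor over replicas and bound single-replica monomial moments termwise (using $|t-\Lambda_m|^{-1}\le(\gamma_*-\Lambda_m)^{-1}$ on the contour) before summing over coordinates, whereas the paper uses the joint $2k$-variable transform and expands the overlaps into mean and fluctuation parts. The point you flag as delicate follows from parity, so you can close it outright: each free class has an even number of unpaired occurrences, hence it is either a cycle with at least two indices and one factor $N$, or a path with exactly two $u$-factors and no $N$. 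Likewise, the bound $a\le L$ holds because the $2a$ endpoint replicas of the $h$-to-$h$ chains are distinct and each must touch one of the $L$ overlaps.
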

\begin{proof}
    As before, we perform a change of basis and assume that $\bA=\bLambda$ is diagonal.
    Consider
    \begin{align*}
        &E(z_1,\dots,z_{2k}) \\
        &:= (\prod_{i=1}^{2k}z_i)^{N/2-1}\int \prod_{i=1}^{2k}(\bsig_h^i\sqrt{z_i})\prod_{\ell=1}^{L}\<\bsig^{i_\ell}\sqrt{z_{i_\ell}},\bsig^{j_{\ell}}\sqrt{z_{j_\ell}}\>^{r_\ell}\exp\lt(\sum_{i=1}^{2k}\<\bu,\bsig^i\>\sqrt{z_i}+\<\bsig^i,\bLambda\bsig^i\>z_i\rt)\mu_0^{\otimes 2k}(\de\bsig).
    \end{align*}
    Then the multivariate Laplace transform of $E$ is
    \begin{align*}
        F(N(t_1,\dots,t_{2k})) &= \frac{\Gamma(N/2)^{2k}}{(N\pi)^{kN}} \int_{\bbR^N} \prod_{i=1}^{2k}y^i_h \prod_{\ell=1}^{L}\<\by^{i_\ell},\by^{j_\ell}\>^{r_\ell} \exp\lt(-\sum_{i=1}^{2k}\lt(t_i\|\by^i\|^2 - \<\bu,\by^i\> - \<\bLambda, (\by^i)^{\otimes2}\>\rt)\rt) \de\by\\
        &= \frac{\Gamma(N/2)^{2k}}{N^{kN}}\exp\lt\{-\sum_{i=1}^{2k}\lt(\frac{1}{2}\sum_{h'=1}^{N}\log(t_i-\Lambda_{h'})+\sum_{h'=1}^{N}\frac{u_{h'}^2}{4(t_i - \Lambda_{h'})}\rt)\rt\} \cdot \frak{G},
    \end{align*}
    where, for
    $\by^i = \frac{1}{2}(t_i-\bLambda)^{-1}\bu + \bw^i$ with $\bw^i$ independently distributed according to $\cN\lt(0,\frac{1}{2}(t_i-\bLambda)^{-1}\rt)$,
    \begin{align}
        \frak{G} &:= \bbE \lt[\prod_{i=1}^{2k}y^i_h \prod_{\ell=1}^{L}\<\by^{i_\ell},\by^{j_\ell}\>^{r_\ell}\rt] \nonumber\\
        &= \bbE \bigg[\prod_{i=1}^{2k}\lt(\frac{u_h}{2(t_i-\Lambda_h)}+w^i_h\rt) \nonumber\\
        &\,\, \prod_{\ell=1}^{L}\lt(\<\bw^{i_\ell},\bw^{j_\ell}\>+\frac{1}{2}\<\bw^{i_\ell},(t_{j_\ell}-\bLambda)^{-1}\bu\>+\frac{1}{2}\<\bw^{j_\ell},(t_{i_\ell}-\bLambda)^{-1}\bu\>+\frac{1}{4}\<(t_{j_\ell}-\bLambda)^{-1}\bu,(t_{i_\ell}-\bLambda)^{-1}\bu\>\rt)^{r_\ell}\bigg]. \label{eq:moment-G}
    \end{align}
    Assume that $\min_{i \in [2k]}\Re(t_i - \max_{h'} \Lambda_{h'}) > c > 0$, and recall that $\|\bu\|\le N^{c_0}$. Define $\cR$ to be a tuple of sets of length $2k+R$, where, for $1\le a\le 2k$, $\cR_a$ is a subset of $\{a\}$, and for $a>2k$, $\cR_a$ is a subset of the pair of indices $\{i_\ell,j_\ell\}$ in the corresponding term. As such, each tuple $\cR$ represents a term in the expansion of (\ref{eq:moment-G}). If there exists an index in $[2k]$ that appears an odd number of times among the sets in $\cR$, then the contribution of the corresponding term to (\ref{eq:moment-G}) is $0$. Consider the tuples $\cR$ where each index appears an even number of times. Let $B(\cR)$ be the collection of indices $a\le 2k$ where $\cR_a = \{a\}$, and let $b(\cR)=|B(\cR)|$. The indices $B(\cR)$ must appear an odd number of times among the remaining sets $(\cR_j)_{j=2k+1}^{2k+R}$. In each possible way to pick out terms among $(\cR_j)_{j=2k+1}^{2k+R}$ so that each index in $B(\cR)$ appears at least once, let $d(\cR)$ denote the number of sets with $|\cR_j|=1$ among these terms.
    Among the remaining terms, each of the indices in $B(\cR)$ not covered by the $d(\cR)$ sets can be matched to terms among $(\cR_j)_{j=2k+1}^{2k+R}$. Consider an arbitrary way to pair up all remaining indices appearing in the terms into pairs; let $f(\cR) \le R - d(\cR) - (b(\cR)-d(\cR))/2$ be the number of such pairs. For each such term and fixed pairing, we can upper bound its contribution to (\ref{eq:moment-G}) by
    \[
        O\lt(|u_h|^{2k-b(\cR)+d(\cR)}N^{f(\cR)/2} (1+\|\bu\|^2)^{R}\rt) \le O\lt(\max_{a\le \min(k,L)}|u_h|^{2k-2a} N^{(R-a)/2} (1+\|\bu\|)^{2R}\rt),
    \]
    noting the constraints $0\le d(\cR) \le b(\cR)$ and $f(\cR) \le R - d(\cR) - (b(\cR)-d(\cR))/2$.
    Thus, we have
    \begin{align*}
        \frak{G} = O_{k,L,R}\lt(\max_{a\le \min(k,L)}|u_h|^{2k-2a} N^{(R-a)/2} (1+\|\bu\|)^{2R}\rt).
    \end{align*}
    Hence, for $R=\sum_\ell r_\ell \ge 3L$,
    \begin{align*}
        \frak{G} = O_{k,L,R}\lt(\max_{a\le \min(k,L)}|u_h|^{2k-2a} N^{(R-a)/2} (1+\|\bu\|)^{2R}\rt).
    \end{align*}
    Taking the inverse Laplace transform and integrating on $t_i = \gamma_* + ix_i$, for $\gamma_*$ defined in Eq.~(\ref{eq:GammaDef}), noting that, as in Lemma \ref{lem:laplace}, we can restrict the integration to the range $x_i \in [-\ell_N,\ell_N]$ for $\ell_N = (\log N)/\sqrt{N}$, we obtain that
    \begin{align*}
        &\frac{\int \prod_{i=1}^{2k}\bsig_h^i\prod_{\ell=1}^{L}\<\bsig^{i_\ell},\bsig^{j_{\ell}}\>^{r_\ell}\exp\lt(\sum_{i=1}^{2k}\<\bu,\bsig^i\>+\<\bsig^i,\bLambda\bsig^i\>\rt)\mu_0^{\otimes 2k}(\de\bsig)}{\int \exp\lt(\sum_{i=1}^{2k}\<\bu,\bsig^i\>+\<\bsig^i,\bLambda\bsig^i\>\rt)\mu_0^{\otimes 2k}(\de\bsig)} \\
        &=O_{k,L,R}\lt(\max_{a\le \min(k,L)}|u_h|^{2k-2a} N^{(R-a)/2} (1+\|\bu\|)^{2R}\rt).
    \end{align*}
\end{proof}

The next lemma states that, under a purely quadratic Hamiltonian, and
for small field, the overlap concentrates near zero.
\begin{lem}[Overlap concentration in quadratic models]\label{lem:overlap-2}
Define
\begin{align*}
A_2(t)^c:=\{(\bsig^1,\bsig^2)\in S_N\times S_N: |\<\bsig^1,\bsig^2\>_N|\ge t\}\, .
\end{align*}
Assuming that $\|\bu\|^2 \le \delta N$ for $\delta$ sufficiently small, we have for some constant $c,C>0$ that, with probability $1-e^{-cN}$,
\begin{align}\label{eq:tail}
\frac{\int_{A_2(t)^c}\exp(H_{\le2}(\bsig^{1})+H_{\le2}(\bsig^{2}))
\mu_0^{\otimes 2}(\de\bsig)}{\int \exp(H_{\le2}(\bsig^{1})+H_{\le2}(\bsig^{2}))\mu^{\otimes 2}_0(\de\bsig)}\le
\exp\Big\{-cN\big(t-C\|\bu\|_N\big)_+^2\Big\}.
\end{align}
The same bound holds for two replicas with different fields and quadratic matrices in a sufficiently small operator-norm neighborhood of $\bA$, replacing $\|\bu\|_N^2$ by $\max_i\|\bu_i\|_N^2$.
\end{lem}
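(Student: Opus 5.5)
The plan is to reduce the overlap tail to a Gaussian computation via the same radial lifting used in Lemma~\ref{lem:laplace}, and to pay only a polynomial factor for undoing that lifting. Here $H_{\le2}(\bsig)=\<\bu,\bsig\>+\<\bsig,\bA\bsig\>$ with $\bA=\bW^{(2)}$. We work on the event, of probability $1-e^{-cN}$, that $\|\bA\|_{\op}\le 2\alpha+\eta<1$ for a small $\eta>0$, where $\alpha<1/2$ is the GOE scale. On this event $\gamma_*-\lambda_{\max}(\bA)\ge\delta>0$, as shown in the proof of Lemma~\ref{lem:laplace}. By symmetry ($\bsig^2\mapsto-\bsig^2$ together with $\bu\mapsto-\bu$) it suffices to bound the event $\<\bsig^1,\bsig^2\>_N\ge t$.

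\textbf{Step 1: exponential tilting in the overlap.} For $\theta\ge0$ we use Markov's inequality,
\[
\frac{\int_{A_2(t)^c}\cdots}{\int\cdots}\le e^{-\theta N t}\,\frac{\int e^{\theta\<\bsig^1,\bsig^2\>}e^{H_{\le2}(\bsig^1)+H_{\le2}(\bsig^2)}\mu_0^{\otimes2}(\de\bsig)}{\int e^{H_{\le2}(\bsig^1)+H_{\le2}(\bsig^2)}\mu_0^{\otimes2}(\de\bsig)}.
\]
The numerator is a single quadratic spherical integral on $S_N\times S_N$ with the $2N\times2N$ block matrix $\tilde\bA=\begin{pmatrix}\bA&\theta\bI/2\\ \theta\bI/2&\bA\end{pmatrix}$. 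Note, however, that it is constrained to the product of two spheres and not to one sphere.

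\textbf{Step 2: two-radius Laplace representation.} Using a two-variable inverse Laplace transform, as in the proof of Lemma~\ref{lem:laplace-moments}, we lift $\bsig^i\sqrt{\ell_i}$ to Gaussians $\by^i$. We then evaluate on the contour $t_1=t_2=\gamma_*+ix_i$. On the real saddle point, the numerator's transform equals the denominator's transform times the ratio
\[
\frac{\int \exp\big(-\gamma_*(\|\by^1\|^2+\|\by^2\|^2)+\sum_i(\<\bu,\by^i\>+\<\by^i,\bA\by^i\>)+\theta\<\by^1,\by^2\>\big)\de\by}{\int(\text{same with }\theta=0)\,\de\by}.
\]
This ratio is explicit. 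It equals $\det(\bI-\theta^2 M^2/4)^{-1/2}\exp(\text{quadratic in }\bu)$, where $M=(\gamma_*-\bA)^{-1}$ has $\|M\|_{\op}\le1/\delta$. Hence, for $\theta\le\delta$, its logarithm is $N\theta^2 C_1+N\theta\,C_2\|\bu\|_N^2+O(N\theta^2\|\bu\|_N^2)$.

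To pass from the Gaussian to the sphere, I would bound the contour integral of the numerator by the absolute value of the integrand. Since $|e^{NG(\gamma_*+ix)}|\le e^{NG(\gamma_*)}$, and since the $\theta$-tilt only perturbs $G$ by $O(\theta^2)$ uniformly along the contour, the numerator is at most $\mathrm{poly}(N)$ times the Gaussian ratio times the denominator. The denominator is computed sharply by Lemma~\ref{lem:laplace}. The $\mathrm{poly}(N)$ loss is harmless because it can be absorbed into the exponent whenever $N(t-C\|\bu\|_N)_+^2\gg\log N$; otherwise the claimed bound is trivial after adjusting $c$.

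\textbf{Step 3: optimize.} The tilted bound is
\[
\exp\{-N\theta t+C_1N\theta^2+C_2N\theta\|\bu\|_N\}.
\]
Here the linear-in-$\theta$ term from the field is $\theta\<M\bu,M\bu\>/4N\le C\theta\|\bu\|_N^2\le C\theta\|\bu\|_N$, using $\|\bu\|_N^2\le\delta$. We choose $\theta=\min(\delta,(t-C\|\bu\|_N)_+/2C_1)$. Then the exponent becomes $-cN(t-C\|\bu\|_N)_+^2$, using $t\le1$ in the regime where $\theta=\delta$.

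\textbf{Extensions.} For two replicas with different fields $\bu_i$ and matrices $\bA_i$ that are close to $\bA$ in operator norm, the same computation goes through with separate saddles $\gamma_*^{(i)}$. Both saddles still satisfy $\gamma_*^{(i)}-\lambda_{\max}(\bA_i)\ge\delta/2$, and the block Gaussian determinant and field terms obey the same bounds with $\max_i\|\bu_i\|_N$.

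\textbf{Main obstacle.} The delicate step is Step 2: controlling the oscillatory contour integral of the tilted numerator uniformly in $\theta$, so that only a polynomial factor is lost. I would first try bounding it by the modulus of the integrand and checking that $\Re$ of the tilted exponent is still maximized at $x=0$. This reduces to showing that $\log\det(\bI-\theta^2 M(t_1)M(t_2)/4)$ has a real part that stays within $O(\theta^2)$ of its value at $x=0$, which follows from $\|M(t)\|_{\op}\le1/\delta$ on the contour.
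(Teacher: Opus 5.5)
Your argument is correct and shares the paper's skeleton: a Chernoff tilt by $e^{\theta\langle\bsig^1,\bsig^2\rangle}$, a two-variable Laplace/contour representation, a bound of order $N\theta^2+\theta\|\bu\|^2$ on the log of the tilted-to-untilted ratio, and optimization in $\theta$. The difference is in how you control the tilted contour integral.

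The paper moves to the tilted saddle $\gamma_*(\theta)$ and proves the decay estimates \eqref{eq:log-est}--\eqref{eq:u-est} to localize. It then does a full saddle-point expansion and compares $G_\theta(\gamma_*(\theta))$ with $G(\gamma_*)$.

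You instead stay on the untilted contour $\Re t_i=\gamma_*$. This is admissible since $\gamma_*-\lambda_{\max}\ge\delta>\theta/2$. You write the integrand as the untilted one times $\det(\bI-\tfrac{\theta^2}{4}M(t_1)M(t_2))^{-1/2}$ times a field factor, and bound the modulus of this factor uniformly via $\|M(t)\|_{\op}\le1/\delta$. Note that the field factor contributes $O(\theta\|\bu\|^2)$, not $O(N\theta^2)$, because the $2\times2$ block inverse differs from $\mathrm{diag}(M(t_1),M(t_2))$ by $O(\theta)$.

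For an upper bound your route is simpler. Through the Schur complement it extends verbatim to distinct non-commuting $\bA_i$ and distinct fields $\bu_i$. The paper's route additionally gives sharp asymptotics, which the lemma does not need.

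One correction: your loss is not $\mathrm{poly}(N)$. The bound $|e^{N(G(\gamma_*+ix)-G(\gamma_*))}|\le(1+x^2/C^2)^{-N/4}$ integrates to $O(N^{-1/2})$, which matches the $\asymp N^{-1/2}$ size of each denominator integral in Lemma~\ref{lem:laplace}. So the ratio is at most a constant times $e^{CN\theta^2+C\theta\|\bu\|^2}$, exactly as in the paper's final display. You need this sharper accounting, because your fallback is wrong. An $N^{C'}$ prefactor cannot be absorbed when $N(t-C\|\bu\|_N)_+^2\asymp\log N$; there the target bound is a nontrivial polynomial decay, not something trivial.

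Finally, your reflection $\bsig^2\mapsto-\bsig^2$ flips only the second replica's field, so it actually relies on the two-field version. Taking $\theta<0$ directly, as the paper does, is cleaner.
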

\begin{proof}
Consider the Hamiltonian $H(\bsig^{1},\bsig^{2})=H_{\le2}(\bsig^{1})+H_{\le2}(\bsig^{2})+2\theta\langle\bsig^{1},\bsig^{2}\rangle$. Let $\Lambda_{i}$ be the eigenvalues of the
quadratic component $\bA$ of $H$. Using the Laplace transform as in Lemma
\ref{lem:laplace},
\begin{align*}
 & \int \exp\Big(\langle \bu,\bsig^{1}+\bsig^{2}\rangle+\langle \bA,(\bsig^1)^{\otimes 2}+(\bsig^2)^{\otimes 2}\rangle\Big)\exp\left(2\theta\langle\bsig^{1},\bsig^{2}\rangle\right)\mu_0^{\otimes 2}(\de\bsig)\\
 & = \left(\frac{\Gamma(N/2)}{(2\pi)N^{N/2-1}}\right)^2 \int_{-\infty}^{\infty}\exp\Big(2N\tilde{G}_{\theta}(\gamma_1+iz_1, \gamma_2+iz_2)\Big)\de z_1\de z_2,
\end{align*}
where
\[
\tilde{G}_{\theta}(z_1,z_2)=\frac{z_1+z_2}{2}-\frac{1}{4N}\sum_{i=1}^{N}\log((z_1-\Lambda_{i})(z_2-\Lambda_i)-\theta^2)+\frac{1}{8N}\sum_{i=1}^{N}\frac{u_{i}^{2}(z_1+z_2-2\Lambda_i + 2\theta)}{(z_1-\Lambda_i)(z_2-\Lambda_i)-\theta^2}.
\]

We also denote
\[
G_\theta(z) = \tilde G_\theta(z,z) = z-\frac{1}{4N}\sum_{i=1}^{N}\log((z-\Lambda_{i})^2-\theta^2)+\frac{1}{4N}\sum_{i=1}^{N}\frac{u_{i}^{2}(z-\Lambda_i + \theta)}{(z-\Lambda_i)^2-\theta^2}.
\]
Let $\gamma_{*}(\theta)$ be a stationary point of $G_\theta$ on $\mathbb{R}$
so that $\gamma_{*}(\theta)>\max\Lambda_{i}+|\theta|$ (there exists
a unique such point), and $\gamma_* = \gamma_*(0)$.

As in Lemma \ref{lem:laplace}, note that
\begin{align}
&\exp\left(2\Re\left(\log((\gamma_*(\theta)-\Lambda_i + iz_1)(\gamma_*(\theta)-\Lambda_i + iz_2)-\theta^2) - \log((\gamma_*(\theta)-\Lambda_i)^2-\theta^2)\right)\right) \nonumber\\
&= \left(\frac{(\gamma_*(\theta)-\Lambda_i)^2-\theta^2-z_1z_2}{(\gamma_*(\theta)-\Lambda_i)^2-\theta^2}\right)^2 + \left(\frac{(\gamma_*(\theta)-\Lambda_i)(z_1+z_2)}{(\gamma_*(\theta)-\Lambda_i)^2-\theta^2}\right)^2 \nonumber\\
&= \frac{((\gamma_*(\theta)-\Lambda_i)^2-\theta^2)^2 + (z_1z_2)^2 + 2\theta^2 z_1z_2 + (z_1^2+z_2^2)(\gamma_* (\theta)-\Lambda_i)^2}{((\gamma_*(\theta)-\Lambda_i)^2-\theta^2)^2} \nonumber\\
&\ge 1 + \frac{z_1^2+z_2^2}{(\gamma_*(\theta)-\Lambda_i)^2-\theta^2}. \label{eq:log-est}
\end{align}
Furthermore,
\begin{align}
    &\Re\left(\frac{(\gamma_*(\theta)-\Lambda_i + \theta) + i(z_1+z_2)/2}{N((\gamma_*(\theta)-\Lambda_i)^2-\theta^2-z_1z_2 + i (\gamma_*(\theta)-\Lambda_i)(z_1+z_2))}-\frac{1}{N(\gamma_*(\theta)-\Lambda_i-\theta)}\right) \nonumber\\
    &= \frac{-(z_1z_2)^2-\theta (\gamma_*(\theta)-\Lambda_i+\theta)(z_1+z_2)^2/2-((\gamma_*(\theta)-\Lambda_i)^2-\theta^2)(z_1^2 + z_2^2)/2}{N(\gamma_*(\theta)-\Lambda_i-\theta)(((\gamma_*(\theta)-\Lambda_i)^2 - \theta^2 - z_1z_2)^2 + ((\gamma_*(\theta)-\Lambda_i)(z_1+z_2))^2)} \nonumber \\
    &\le 0. \label{eq:u-est}
\end{align}
Given (\ref{eq:log-est}) and (\ref{eq:u-est}), we can proceed as in Lemma \ref{lem:laplace} to restrict the integral over $z_1$ and $z_2$ to the range $|z_1|,|z_2|<(\log N)/\sqrt{N}$, incurring an error $e^{-c(\log N)^2}$. Then by similarly expanding around $(\gamma_*(\theta),\gamma_*(\theta))$, we obtain
\begin{align*}
 & \int \exp\Big(\langle \bu,\bsig^{1}+\bsig^{2}\rangle+\langle \bA,(\bsig^1)^{\otimes 2}+(\bsig^2)^{\otimes 2}\rangle\Big)\exp\left(2\theta\langle\bsig^{1},\bsig^{2}\rangle\right)\mu_0^{\otimes 2}(\de\bsig)\\
 & = \left(\frac{\Gamma(N/2)}{(2\pi)N^{N/2-1}}\right)^2  e^{2NG_{\theta}(\gamma_*(\theta))} \lt\{\frac{\pi}{N \det(\nabla^2 \tilde{G}_\theta(\gamma_*(\theta),\gamma_*(\theta)))^{1/2}} + O(N^{-3/2+\epsilon})\rt\}.
\end{align*}

When $\|\bu\|^{2} \le \delta N$, we have for $G$ as in (\ref{eq:GDef}) that
\[
G_{\theta}(\gamma_{*}(\theta))-G(\gamma_{*})=O(\theta^{2})+O(\theta \|\bu\|^2/N).
\]
On the other hand, by Lemma \ref{lem:laplace},
\begin{align*}
&\int \exp(\langle \bu,\bsig^{1}+\bsig^{2}\rangle+\langle \bA,(\bsig^1)^{\otimes 2}+(\bsig^2)^{\otimes 2}\rangle)\mu_0^{\otimes 2}(\de\bsig)\\
&=(1+O(N^{-c})) \left(\frac{\Gamma(N/2)}{(2\pi)N^{N/2-1}}\right)^2 e^{2NG(\gamma_*)}\left(\sqrt{\frac{2\pi}{NG''(\gamma_*)}} + O(N^{-1+\eps})\right)^2.
\end{align*}
In particular,
\begin{align*}
&\frac{\int_{t\le|\langle\bsig^{1},\bsig^{2}\rangle_N|}\exp(H_{\le2}(\bsig^{1})+H_{\le2}(\bsig^{2}))\mu_0^{\otimes 2}(\de\bsig)}{\int_{\bsig^{1},\bsig^{2}}\exp(H_{\le2}(\bsig^{1})+H_{\le2}(\bsig^{2}))\mu_0^{\otimes 2}(\de\bsig)} \\
&\le 2\exp(-2N\theta t) \max_{\epsilon \in \{\pm 1\}}\frac{\int_{\bsig^{1},\bsig^{2}}\exp(\langle \bu,\bsig^{1}+\bsig^{2}\rangle + N\langle \bA,(\bsig^{1})^{\otimes 2}+(\bsig^{2})^{\otimes 2}\rangle)\exp\left(2\epsilon \theta\langle\bsig^{1},\bsig^{2}\rangle\right)\mu_0^{\otimes 2}(\de\bsig)}{\int_{\bsig^{1},\bsig^{2}}\exp(\langle \bu,\bsig^{1}+\bsig^{2}\rangle + N\langle \bA,(\bsig^{1})^{\otimes 2}+(\bsig^{2})^{\otimes 2}\rangle)\mu_0^{\otimes 2}(\de\bsig)} \\
&\le C\exp\Big(-2N\theta t + O(N\theta^2) + O(\theta \|\bu\|^2)\Big).
\end{align*}
Optimizing over $\theta$, we obtain Eq.~\eqref{eq:tail}. For different matrices $\bA_1,\bA_2$ and fields $\bu_1,\bu_2$, the same calculation uses the block matrix $\bigl(\begin{smallmatrix}z_1\bI-\bA_1&-\theta\bI\\-\theta\bI&z_2\bI-\bA_2\end{smallmatrix}\bigr)$. The saddle-point gap gives the same bound $\log\bbE\exp(2\theta\langle\bsig^1,\bsig^2\rangle)\le O(1)+O(N\theta^2)+O(|\theta|\max_i\|\bu_i\|^2)$ uniformly under these perturbations and rescalings.
\end{proof}

We will also need the following lemma, giving an accurate expansion of moments of overlaps in perturbations of quadratic Hamiltonians.

\begin{lem}\label{lem:laplace-moments-2}
    Let $\bA\in\bbR^{N\times N}$ be a GOE matrix scaled by $0\le\alpha<1/2$ with eigenvalues given by $\bLambda$. Let $\bDel\in\bbR^{N\times N}$ be an independent GOE matrix scaled by $0\le\beta\le B$, and assume that $\zeta_1,\zeta_2\in\bbR$ satisfy $|\zeta_1|,|\zeta_2|\le C(\log N)/\sqrt N$. For $i=1,2$, let $\tilde{\bLambda}_i=\bLambda+\zeta_i\bDel$. Assume that $\bu\in\bbR^N$ is such that $\|\bu\|\le N^{c_0}$. Let $r\ge0$ and $L\ge1$ be integers, and set $J=L+r+2$. For $c_0>0$ sufficiently small, we have that the following claim holds with probability at least $1-\exp(-cN)$: There exist $C_{i,j}=O_{r,L}\lt((1+\|\bu\|)^{2r}N^{\lfloor r/2\rfloor}\rt)$ for $i+j\le J$ such that
    \begin{align}
        F_r(\zeta_1,\zeta_2)&:=\frac{\int\<\bsig^1,\bsig^2\>^r\exp\lt(\sum_{i=1}^2\big(\<\bu,\bsig^i\>+\<\bsig^i,\tilde{\bLambda}_i\bsig^i\>\big)\rt)\mu_0^{\otimes2}(\de\bsig)}{\int\exp\lt(\sum_{i=1}^2\big(\<\bu,\bsig^i\>+\<\bsig^i,\tilde{\bLambda}_i\bsig^i\>\big)\rt)\mu_0^{\otimes2}(\de\bsig)}\nonumber\\
        &=C_{0,0}+\sum_{1\le i+j\le J}C_{i,j}\zeta_1^i\zeta_2^j+O_{r,L}(N^{-L/2}).\label{eq:laplace-moments-expanded}
    \end{align}
    Further, for each fixed $i,j\ge0$, uniformly on a sufficiently small fixed neighborhood of $(0,0)$,
    \begin{align}\label{eq:quadratic-moment-derivatives}
        |\partial_{\zeta_1}^i\partial_{\zeta_2}^jF_r(\zeta_1,\zeta_2)|
        \le O_{r,i,j}\lt((1+\|\bu\|)^{2r}N^{\lfloor r/2\rfloor}\rt).
    \end{align}
    The constants are uniform for bounded $B$ and $\alpha$ bounded away from $1/2$. The derivative bound also holds with different fields and symmetric perturbations of bounded operator norm in the two replicas, and on spheres of radii $\sqrt N(1+o(1))$, replacing $\|\bu\|$ by the maximum of the field norms, provided the saddle points remain a fixed distance from the spectra.
\end{lem}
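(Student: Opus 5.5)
Both parts will rest on the two-replica Laplace-transform representation already used in Lemmas~\ref{lem:laplace} and \ref{lem:overlap-2}. We write $\<\bsig^1,\bsig^2\>^r$ as the $r$-th derivative in $\theta$ at $\theta=0$ of $\exp(\theta\<\bsig^1,\bsig^2\>)$. Equivalently, we work directly with the Gaussian lift $\by^i=\sqrt{\ell_i}\bsig^i$ as in Lemma~\ref{lem:laplace-moments}. For $\Re t_i$ to the right of the spectrum of $\tilde\bLambda_i$, the Laplace transform of the numerator becomes
\[
\frac{\Gamma(N/2)^2}{N^N}\exp\Big\{-\sum_{i=1}^2\Big(\tfrac12\log\det(t_i-\tilde\bLambda_i)-\tfrac14\<\bu,(t_i-\tilde\bLambda_i)^{-1}\bu\>\Big)\Big\}\,\bbE\big[\<\by^1,\by^2\>^r\big],
\]
with $\by^i=\tfrac12(t_i-\tilde\bLambda_i)^{-1}\bu+\bw^i$ and independent Gaussians $\bw^i\sim\cN(0,\tfrac12(t_i-\tilde\bLambda_i)^{-1})$. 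Expanding $\<\by^1,\by^2\>^r$ and applying Wick's theorem, the pure-noise part is nonzero only when $\<\bw^1,\bw^2\>$ appears an even number of times, and then it contributes $O(N^{\lfloor r/2\rfloor})$ via traces $\Tr\big((t_1-\tilde\bLambda_1)^{-1}(t_2-\tilde\bLambda_2)^{-1}\big)^{m}$. Cross terms carry factors of $\bu$, each bounded by $(1+\|\bu\|)^{2}$ per factor. So the Gaussian factor is a rational function $P_r(t_1,t_2,\zeta_1,\zeta_2)$, analytic in a neighborhood of $\{\Re t_i\ge\gamma_*-\delta\}$ times a small polydisc in $\zeta$. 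Its size is $O((1+\|\bu\|)^{2r}N^{\lfloor r/2\rfloor})$, and its $\zeta$-derivatives obey the same bound because $\|\bDel\|_{\op}\le C$ with probability $1-e^{-cN}$.

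Next we invert the transform as in Lemma~\ref{lem:laplace}. We integrate over $t_i=\gamma_*^{(i)}(\zeta_i)+ix_i$, where $\gamma^{(i)}_*(\zeta_i)=\gamma_*(\tilde\bLambda_i)$ is the saddle of $G(\cdot;\tilde\bLambda_i)$, which stays at distance $\ge\delta$ from the spectrum for small $\zeta_i$. The tails $|x_i|>(\log N)/\sqrt N$ are negligible, and the denominator factorizes into two copies of Lemma~\ref{lem:laplace}. The ratio becomes
\[
F_r(\zeta_1,\zeta_2)=\frac{\int\!\!\int P_r(t_1,t_2,\zeta)\,e^{N\sum_i G(t_i;\tilde\bLambda_i)}\,dx_1dx_2}{\prod_i\int e^{NG(t_i;\tilde\bLambda_i)}dx_i}.
\]
We then apply the standard Laplace (steepest-descent) asymptotic expansion around $x=0$. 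This gives $P_r$ evaluated at the saddle plus a series in $N^{-1}$ whose coefficients are derivatives of $P_r$ and of $G$ at the saddle. Truncating after $O(L)$ orders leaves an error of order $N^{-L/2}$ relative to $(1+\|\bu\|)^{2r}N^{\lfloor r/2\rfloor}$. The $N^{c_0}$ losses from $\|\bu\|$ must be absorbed here, which is why $c_0$ is taken small and the truncation order is slightly raised. This yields a function $\Phi(\zeta_1,\zeta_2)$, analytic on a fixed polydisc, with $|\Phi|\lesssim(1+\|\bu\|)^{2r}N^{\lfloor r/2\rfloor}$ and $F_r=\Phi+O(N^{-L/2})$.

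Both claims then follow from this analyticity. For \eqref{eq:quadratic-moment-derivatives}, note that $F_r$ itself is real-analytic in $\zeta$ on a complex neighborhood where the same contour argument applies: the saddle points move analytically, and the decay estimates \eqref{eq:log-est}-type persist under small complex perturbations. Cauchy estimates on a fixed polydisc therefore bound all derivatives by the sup bound. For \eqref{eq:laplace-moments-expanded}, we Taylor-expand $F_r$ to order $J=L+r+2$ at $(0,0)$. The coefficients $C_{i,j}=\partial^i_{\zeta_1}\partial^j_{\zeta_2}F_r(0,0)/(i!j!)$ satisfy the claimed bound. Since $|\zeta_i|\le C\log N/\sqrt N$, the remainder is $O\big((1+\|\bu\|)^{2r}N^{\lfloor r/2\rfloor}(\log N/\sqrt N)^{J+1}\big)$. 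Because $J+1-2\lfloor r/2\rfloor\ge L+3$, this is at most $O(N^{-L/2})$ once $c_0$ is small. The variants (different fields and perturbations, radii $\sqrt N(1+o(1))$) follow from the same computation, since only the distance of the saddles from the spectra entered.

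The main obstacle is making the analyticity in complex $\zeta$ rigorous: we must show the contour deformation and the tail bounds of Lemma~\ref{lem:laplace} survive when $\tilde\bLambda_i$ is a non-Hermitian perturbation $\bLambda+\zeta_i\bDel$ with complex $\zeta_i$. We would handle this by controlling $\Re\log\det$ via $\|(t-\bLambda)^{-1}\zeta\bDel\|_{\op}\le 1/2$, which treats the perturbation as a small relative correction of the real case.
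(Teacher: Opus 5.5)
Your proposal is correct, but for the derivative bound \eqref{eq:quadratic-moment-derivatives} it takes a genuinely different route from the paper.

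\textbf{The paper's route.} The paper never leaves real $\zeta$. It differentiates $F_r=I_r/I_0$ directly under the integral sign. The $\zeta$-derivatives of the Wick factor are controlled by $\partial_{\zeta_i}^m\bZ^i=m!\,\bZ^i(\bDel\bZ^i)^m$. The contour is pinned at the moving saddle $\gamma_i(\zeta_i)$, so stationarity $\tilde G_i'(\gamma_i)=0$ forces $\partial_{\zeta_i}^m\phi_i(x_i)=O(x_i^2)$. Each $\zeta$-derivative of $e^{N\phi_i}$ therefore costs only a polynomial in $Nx_i^2$, which the Gaussian absorbs. Taylor's theorem with remainder then gives \eqref{eq:laplace-moments-expanded} exactly as in your last step, including your exponent count $J+1-2\lfloor r/2\rfloor\ge L+3$.

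\textbf{Your route.} You instead extend $F_r$ holomorphically to a fixed complex polydisc, prove a sup bound there, and apply Cauchy estimates.
\begin{itemize}
\item \emph{What it buys:} all derivative orders are handled at once, with Cauchy-type constants, so the Taylor series even converges on a fixed polydisc. You also avoid the per-order bookkeeping of the paper.
\item \emph{What it costs:} you must redo the contour and tail analysis of Lemma~\ref{lem:laplace} for the complex symmetric, non-Hermitian matrices $\bLambda+\zeta_i\bDel$ and for complex saddles. This is exactly the obstacle you flag, and the paper's real-variable argument sidesteps it entirely.
\end{itemize}

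\textbf{What to make explicit.} A priori $F_r$ is only a ratio of entire functions of $(\zeta_1,\zeta_2)$. The crux of your route is therefore the lower bound $|I_0(\zeta_1,\zeta_2)|\gtrsim N^{-1}$, i.e.\ nonvanishing of the denominator, on the whole complex polydisc. Without it you have neither holomorphy nor the sup bound. It follows from the leading-order complex saddle-point asymptotics, using three facts:
\begin{itemize}
\item the contour $t=\gamma_*(\zeta)+iy$ passes through the saddle;
\item $\Re G''(\gamma_*(\zeta);\zeta)\ge c$ for $|\zeta|$ small;
\item under $\|\zeta(t-\bLambda)^{-1}\bDel\|_{\op}\le 1/2$, the perturbation of $\Re G$ along the contour is uniformly $O(|\zeta|)$. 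Comparing with the $\zeta=0$ decay then gives $\Re\phi\le -c\min(y^2,\log(1+y^2))$ away from a fixed window.
\end{itemize}
With that in hand, $|I_r|\le\sup_{\text{contour}}|P_r|\cdot O(N^{-1})$ gives the sup bound $|F_r|\lesssim(1+\|\bu\|)^{2r}N^{\lfloor r/2\rfloor}$.

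\textbf{A redundancy.} Your intermediate steepest-descent expansion producing $\Phi$ is not needed. Neither claim uses it once you have analyticity and the sup bound for $F_r$ itself.
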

\begin{proof}
    Consider
    \begin{align*}
        E(z_1,z_2)&:=(z_1z_2)^{N/2-1}\int\<\bsig^1\sqrt{z_1},\bsig^2\sqrt{z_2}\>^r\\
        &\qquad\qquad\cdot\exp\lt(\sum_{i=1}^2\big(\<\bu,\bsig^i\>\sqrt{z_i}+\<\bsig^i,\tilde{\bLambda}_i\bsig^i\>z_i\big)\rt)\mu_0^{\otimes2}(\de\bsig).
    \end{align*}
    Then the multivariate Laplace transform of $E$ is
    \begin{align*}
        \widehat E(Nt_1,Nt_2)
        &=\frac{\Gamma(N/2)^2}{(N\pi)^N}\int_{(\bbR^N)^2}\<\by^1,\by^2\>^r
        \exp\lt(-\sum_{i=1}^2\big(t_i\|\by^i\|^2-\<\bu,\by^i\>-\<\by^i,\tilde{\bLambda}_i\by^i\>\big)\rt)\de\by\\
        &=\frac{\Gamma(N/2)^2}{N^N}\exp\lt\{-\frac12\sum_{i=1}^2\log\det(t_i\bI_N-\tilde{\bLambda}_i)
        +\frac14\sum_{i=1}^2\<\bu,(t_i\bI_N-\tilde{\bLambda}_i)^{-1}\bu\>\rt\}\cdot\frak G.
    \end{align*}
    Here, for $\bZ^i=(t_i\bI_N-\tilde{\bLambda}_i)^{-1}$, $\by^i=\frac12\bZ^i\bu+\bw^i$ with $\bw^i$ independently distributed according to $\cN(0,\frac12\bZ^i)$, and
    \begin{align*}
        \frak G(t_1,t_2;\zeta_1,\zeta_2)
        &:=\bbE\lt[\<\by^1,\by^2\>^r\rt]\\
        &=\bbE\bigg[\bigg(\<\bw^1,\bw^2\>+\frac12\<\bw^1,\bZ^2\bu\>
        +\frac12\<\bw^2,\bZ^1\bu\>+\frac14\<\bZ^1\bu,\bZ^2\bu\>\bigg)^r\bigg].
    \end{align*}
    By Wick's formula, $\frak G$ is a rational function of $t_1,t_2$, and hence extends to complex values of $t_i$.

    We work on the event in Lemma \ref{lem:laplace}, intersected with $\{\|\bDel\|_{\op}\le C_B\}$, which has probability at least $1-\exp(-cN)$. Whenever $\Re t_i-\lambda_{\max}(\tilde{\bLambda}_i)\ge d>0$, we have
    \begin{align*}
        \partial_{\zeta_i}^m\bZ^i=m!\bZ^i(\bDel\bZ^i)^m,
        \qquad \|\partial_{\zeta_i}^m\bZ^i\|_{\op}=O_{m,B,d}(1).
    \end{align*}
    Expanding $\frak G$ by Wick's formula, each term is a product of traces and quadratic forms in $\bu$. For odd $r$, every term contains at least one quadratic form in $\bu$. We can thus bound the derivatives by
    \begin{align*}
        |\partial_{\zeta_1}^i\partial_{\zeta_2}^j\frak G|
        &\le O_{r,i+j}\lt(\|\bu\|^{2r}+N^{r/2}\rt),\qquad r\text{ even},\\
        |\partial_{\zeta_1}^i\partial_{\zeta_2}^j\frak G|
        &\le O_{r,i+j}\lt(\|\bu\|^{2r}+\|\bu\|^2N^{(r-1)/2}\rt),\qquad r\text{ odd}.
    \end{align*}
    The same bounds hold after any fixed number of derivatives in $t_1,t_2$.

    Next, for $\tilde G_i(\gamma)=G(\gamma;\tilde{\bLambda}_i,\bu)$, let $\gamma_i=\gamma_*(\tilde{\bLambda}_i,\bu)$. By the resolvent identity and the stationarity equation,
    \begin{align*}
        \tilde G_i'(\gamma)&=1-\frac1{2N}\Tr(\gamma\bI_N-\tilde{\bLambda}_i)^{-1}
        -\frac1{4N}\<\bu,(\gamma\bI_N-\tilde{\bLambda}_i)^{-2}\bu\>,\\
        \tilde G_i''(\gamma_i)&\asymp1,\qquad
        \gamma_i-\gamma_*=O(|\zeta_i|).
    \end{align*}
    Thus, for $|\zeta_i|$ sufficiently small, $\gamma_i$ remains a fixed distance from the spectrum, and its derivatives of every fixed order are bounded.

    We take the inverse Laplace transforms on $t_i=\gamma_i+ix_i$. Set
    \[
        \phi_i(x_i)=\tilde G_i(\gamma_i+ix_i)-\tilde G_i(\gamma_i),
    \]
    and write
    \begin{align*}
        I_r(\zeta_1,\zeta_2)
        &:=\int_{\bbR^2}e^{N(\phi_1(x_1)+\phi_2(x_2))}
        \frak G(\gamma_1+ix_1,\gamma_2+ix_2;\zeta_1,\zeta_2)\de x_1\de x_2.
    \end{align*}
    The numerator and denominator of $F_r$ have the same prefactor and exponential saddle-point factors. Consequently,
    \[
        F_r(\zeta_1,\zeta_2)=\frac{I_r(\zeta_1,\zeta_2)}{I_0(\zeta_1,\zeta_2)},
        \qquad
        I_0=\frac{2\pi}{N\sqrt{\tilde G_1''(\gamma_1)\tilde G_2''(\gamma_2)}}(1+o(1))\asymp N^{-1}.
    \]
    As in Lemma \ref{lem:laplace}, the integrals may be restricted to $|x_i|\le\ell_N=(\log N)/\sqrt N$, with an error bounded by
    $O_{r,i,j}\lt((1+\|\bu\|)^{2r}N^{\lfloor r/2\rfloor}e^{-c(\log N)^2}\rt)$, also after any fixed $i,j$ derivatives in $\zeta_1,\zeta_2$. Indeed, the resolvents remain bounded on these contours, and fixed derivatives of the exponential contribute only polynomial factors in $N$ outside this range.

    On $|x_i|\le\ell_N$, the identity $\tilde G_i'(\gamma_i)=0$ gives, for each fixed $m\ge0$,
    \[
        \partial_{\zeta_i}^m\phi_i(x_i)=O_m(x_i^2),
        \qquad \Re\phi_i(x_i)\le-cx_i^2.
    \]
    Hence differentiating $e^{N\phi_i(x_i)}$ introduces only a polynomial in $Nx_i^2$, which is absorbed by the Gaussian integral. Combining this with the derivative bounds for $\frak G$, we obtain
    \begin{align*}
        |\partial_{\zeta_1}^i\partial_{\zeta_2}^jI_r|
        &\le O_{r,i,j}\lt((1+\|\bu\|)^{2r}N^{\lfloor r/2\rfloor-1}\rt),\\
        |\partial_{\zeta_1}^i\partial_{\zeta_2}^jI_0|&\le O_{i,j}(N^{-1}).
    \end{align*}
    Differentiating $F_r=I_r/I_0$ proves \eqref{eq:quadratic-moment-derivatives}.

    Finally, Taylor's expansion to order $J=L+r+2$ yields
    \begin{align*}
        F_r(\zeta_1,\zeta_2)
        &=F_r(0,0)+\sum_{1\le i+j\le J}C_{i,j}\zeta_1^i\zeta_2^j\\
        &\qquad+O_{r,L}\lt((1+\|\bu\|)^{2r}N^{\lfloor r/2\rfloor}
        (|\zeta_1|+|\zeta_2|)^{J+1}\rt),
    \end{align*}
    where
    \[
        C_{i,j}=\frac{1}{i!j!}\partial_{\zeta_1}^i\partial_{\zeta_2}^jF_r(0,0).
    \]
    Under the assumption $|\zeta_i|\le C(\log N)/\sqrt N$, the remainder is $O_{r,L}(N^{-L/2})$ for $c_0$ sufficiently small, proving \eqref{eq:laplace-moments-expanded}. The same argument applies to different fields, bounded symmetric perturbations, and rescaled radii as in the statement.
\end{proof}

\section{Estimates of restricted partition functions}

In this section we estimate modified partition functions that are obtained
by suitable restrictions of the integral over $\bsig$, always under the assumption
\eqref{eq:replica-symmetry}.
Namely, for any Borel set $U\subseteq (S_N)^{\otimes m}$,
\begin{align}
    Z_m(U):=\int_U e^{\sum_{i=1}^mH(\bsig^i)}\mu_0^{\otimes m}(\de\bsig)\, ,
\end{align}
with subscript omitted if $m=1$. If $U=S_N$, we write simply $Z=Z(S_N)$.
We also denote by $Z_{\le 2,m}(U)$ the same integral in which $H(\bsig)$ is
replaced by $H_{\le 2}(\bsig)$:

\begin{align}
    Z_{\le 2, m}(U):=\int_U e^{\sum_{i=1}^mH_{\le 2}(\bsig^i)}\mu_0^{\otimes m}(\de\bsig)\, .
\end{align}
We will occasionally omit the subscript $m$ when the dimension of $U$ is clear from the context.

Throughout this section, we follow the notations
$\<\bx,\by\>_N = \langle \bx,\by\rangle/N$, so $\langle \bx,\bx\rangle_N = \|\bx\|_N^2$.

As for the restrictions, an important role is played by the typical set:
\begin{align}
        T(\delta)=\left\{\bsig\in S_N:\int_{\bsig':|\<\bsig',\bsig\>_N|>\delta}
        e^{H(\bsig')}\mu_0(\de\bsig')<e^{-c_1(\delta)N}\min\lt(\int e^{H(\bsig)}\mu_0(\de\bsig); e^{N\xi(1)/2}\rt)\right\}\, .\label{eq:TypicalDef}
    \end{align}
We further define $A_{m}(\delta)\subseteq (S_N)^m$ to be the set of $m$-tuples
of vectors which are nearly orthogonal. Namely:
\begin{align}\label{eq:Am_def}
A_{m}(\delta):=\Big\{(\bsig^i)_{i\le m}:\, \bsig^i\in S_N, |\<\bsig^i,\bsig^j\>_N|\le \delta\,\,\forall i\neq j\Big\}\, .
\end{align}
Finally, we consider the set of $m$-tuples in $T=T(\delta)$ that are nearly orthogonal:
\begin{align}\label{eq:AmT_def}
A_{m}(T,\delta):=\Big\{(\bsig^i)_{i\le m}:\, \bsig^i\in T, |\<\bsig^i,\bsig^j\>_N|\le \delta\,\,\forall i\neq j\Big\}\, .
\end{align}
In particular $A_{m}(T,\delta) = T^{m}\cap A_m(\delta)$.

Our first lemma establishes that, under the Gibbs measure, non-typical points are exponentially rare.
\begin{lem}[Most points are typical]\label{lem:truncate}
    For any $\delta > 0$, there exist $u(\delta), c_1(\delta),c_2(\delta) > 0$ such that the following holds.
    Let $H(\bsig)$ be defined as per Eq.~\eqref{eq:FirstLocal} and suppose $\norm{\bu} \le u(\delta) \sqrt{N}$.
    Let $T(\delta)$ be defined as per Eq.~\eqref{eq:TypicalDef}.

    Then, with probability at least $1-\exp(-c_2(\delta)N)$,
    \begin{align}\label{eq:truncate}
        Z(T(\delta))\ge(1-e^{-Nc_2(\delta)})\cdot Z\,  .
    \end{align}
    Furthermore, there is $c_3(\delta)>0$ such that, with probability at least $1-\exp(-c_3(\delta)N)$,
    \begin{align}\label{eq:truncate-quad}
        Z_{\le 2}(T(\delta)^c) \le e^{-c_3(\delta)N} Z_{\le 2}.
    \end{align}
   Finally
   \beq
        \label{eq:atypical-small-in-expectation}
        \EE \lt[\int_{T(\delta)^c} e^{H_{\ge 2}(\bsig)} ~\mu_0(\de \bsig)\rt]
        \le e^{-c_1(\delta) N} \EE Z_{\ge 2}.
    \eeq
\end{lem}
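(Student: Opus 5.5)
The plan is to derive all three claims from a second-moment (two-replica) bound on the overlap under the Gibbs measure, combined with Markov's inequality. The starting point is the identity
\[
\int_{T(\delta)^c} e^{H(\bsig)}\mu_0(\de\bsig)\le \frac{1}{\min(Z;e^{N\xi(1)/2})e^{-c_1N}}\int\!\!\int_{|\<\bsig,\bsig'\>_N|>\delta} e^{H(\bsig)+H(\bsig')}\mu_0^{\otimes 2}(\de\bsig,\de\bsig').
\]
It follows because each $\bsig\notin T(\delta)$ has, by definition, restricted mass $\int_{|\<\bsig',\bsig\>_N|>\delta}e^{H(\bsig')}$ at least $e^{-c_1N}\min(Z,e^{N\xi(1)/2})$. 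So it suffices to show that the two-replica partition function restricted to overlap $>\delta$ is at most $e^{-c'N}Z^2$ for some $c'>2c_1$, with the $e^{N\xi(1)/2}$ term handled separately when $Z$ is large.

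\textbf{Proof of \eqref{eq:atypical-small-in-expectation}.} I would prove this first, since it is purely annealed; here $\bu$ plays no role. By the same identity with the min bounded by $e^{N\xi(1)/2}$, it is enough to bound
\[
\EE\int\!\!\int_{|q|>\delta}e^{H_{\ge2}(\bsig)+H_{\ge2}(\bsig')}=\int_{|q|>\delta} e^{N\xi(1)+N\xi(q)}\,\mu_0^{\otimes2}.
\]
The overlap $q$ under $\mu_0^{\otimes 2}$ has density $\propto (1-q^2)^{(N-3)/2}$, so this is at most $e^{N\xi(1)}\sup_{|q|>\delta}e^{N(\xi(q)+\frac12\log(1-q^2))}$.

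The strict condition \eqref{eq:replica-symmetry} controls this exponent. Using $\xi(q)+q+\log(1-q)<0$ together with the same inequality at $-q$ (for odd $p$ terms, use $\xi(-q)\le\xi(|q|)$, valid when $\xi$ has nonnegative coefficients), we get
\[
2\xi(q)+\log(1-q^2)<0 ,
\]
hence $\xi(q)+\tfrac12\log(1-q^2)\le -\kappa(\delta)<0$ on $|q|>\delta$ by compactness, with care as $|q|\to1$ where the log diverges. Dividing by $e^{N\xi(1)/2}$ and comparing with $\EE Z_{\ge2}=e^{N\xi(1)/2}$ gives the claim, choosing $c_1(\delta)<\kappa(\delta)/2$.

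\textbf{Proof of \eqref{eq:truncate}.} Here I need $Z$ to be close to its annealed value, which is where replica symmetry enters in its quenched form. By \cite{talagrand2006spherical} and Gaussian concentration of $\log Z$, with probability $1-e^{-cN}$ we have
\[
\tfrac1N\log Z\ge \tfrac1N\log \EE Z-\eta
\]
for any fixed small $\eta$, and $\EE Z$ is comparable to $e^{N\xi(1)/2}Z_{\le1}$-type quantities when $\bu$ is small. Then:
\begin{itemize}
\item By Markov on the two-replica restricted integral, whose expectation is computed as above with the field adding at most $O(\|\bu\|^2)$ to the exponent, that integral is at most $e^{-N\kappa/2}(\EE Z)^2$ with high probability.
\item Dividing by $Z\,e^{-c_1N}\min(Z,e^{N\xi(1)/2})\ge e^{-(c_1+2\eta)N}(\EE Z)^2/\mathrm{poly}$, we get $Z(T^c)\le e^{-c_2N}Z$ once $u(\delta)$, $\eta$ and $c_1$ are small relative to $\kappa$.
\end{itemize}

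\textbf{Proof of \eqref{eq:truncate-quad}.} I would use the same identity with $H_{\le2}$ in the numerator, with one mismatch to handle: $T$ is defined with the full $H$ in the inner integral. I plan to bound the inner integral by a Markov estimate in which I average over $H_{\ge3}$ conditionally on $H_{\le2}$. The cross term $\EE[e^{H_{\ge3}(\bsig')}]$ is explicit, and the overlap restriction on the $H_{\le 2}$ Gibbs measure is controlled by Lemma~\ref{lem:overlap-2}, which gives decay $e^{-cN\delta^2}$ once $\|\bu\|_N\ll\delta$.

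\textbf{Main obstacle.} The hardest step is the lower bound on $Z$ (quenched equals annealed up to $e^{o(N)}$ with exponential probability) uniformly in small $\bu$. I would try to get it from the Talagrand free energy formula plus concentration, with uniformity coming from Remark~\ref{rmk:uniform-mixtures}.
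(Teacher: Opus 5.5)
There is a genuine gap, in the step you treat as routine: the annealed two-replica bound. Your reduction is fine: every $\bsig\in T(\delta)^c$ carries band mass at least $e^{-c_1N}\min(Z,e^{N\xi(1)/2})$. But to make Markov's inequality useful you then need $\xi(q)+\frac12\log(1-q^2)\le-\kappa(\delta)$ for $|q|\ge\delta$, and this does not follow from \eqref{eq:replica-symmetry}.

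Your derivation invokes the hypothesis ``at $-q$''. However, \eqref{eq:replica-symmetry} is imposed only on $(0,1)$, and what you would actually need, $\xi(q)-q+\log(1+q)<0$, is strictly stronger than it. More to the point, $\frac12\log(1-q^2)-q-\log(1-q)=\frac12\log\frac{1+q}{1-q}-q>0$ on $(0,1)$. So the annealed second-moment condition is strictly stronger than strict replica symmetry.

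A concrete counterexample is $\xi(s)=\frac65 s^3$. It satisfies \eqref{eq:replica-symmetry}, since for the pure $3$-spin in this normalization the threshold is $\inf_{q}(-q-\log(1-q))/q^3\approx 1.456$. Yet $\xi(0.73)+\frac12\log(1-0.73^2)\approx 0.086>0$. Hence $\EE\iint_{|\langle\bsig,\bsig'\rangle_N|>\delta}e^{H_{\ge2}(\bsig)+H_{\ge2}(\bsig')}\,\mu_0^{\otimes2}(\de\bsig,\de\bsig')$ exceeds $(\EE Z_{\ge2})^2$ by a factor $e^{cN}$. It is dominated by rare disorder, and Markov's inequality yields nothing, both in your proof of \eqref{eq:atypical-small-in-expectation} and in the first bullet of your proof of \eqref{eq:truncate}. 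This failure of the naive second moment in part of the replica-symmetric phase is exactly why $T(\delta)$ is introduced.

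The missing idea is a quenched bound on the band under the planted model, which the paper imports from Proposition~3.1 of \cite{huang2023constructive}.
\begin{itemize}
\item One writes $\EE\int_{T(\delta)^c}e^{H_{\ge2}}\,\de\mu_0=\EE Z_{\ge2}\cdot\bbP_{\mathrm{pl}}(\bsig\in T(\delta)^c)$. Under the planted law, $\bsig\sim\mu_0$ and $H_{\ge2}$ acquires the spike $N\xi(\langle\bsig,\cdot\rangle_N)$.
\item One then shows that the band mass is at most $e^{N(\xi(1)/2-\eta)}$ with planted probability $1-e^{-cN}$. This uses Gaussian concentration together with Talagrand's replica-symmetric upper bound for the residual spin glass on the band.
\item That bound gives the exponent $\frac12\xi(1)+\frac12[\xi(q)+q+\log(1-q)]$ (compare \eqref{eq:replica-sym}), rather than the planted-annealed $\frac12\xi(1)+\xi(q)+\frac12\log(1-q^2)$. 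Averaging the band mass instead of concentrating it would return you exactly to the failing computation above.
\end{itemize}

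With \eqref{eq:atypical-small-in-expectation} in hand, \eqref{eq:truncate} follows from three ingredients, and no two-replica bound on $Z$ is needed:
\begin{itemize}
\item Markov's inequality on this first moment;
\item the quenched lower bound $Z_{\ge2}\ge e^{-\eta N}\EE Z_{\ge2}$, from Talagrand's theorem plus concentration (this is routine, not the main obstacle you identified);
\item the field bound $|\langle\bu,\bsig\rangle|\le u(\delta)N$.
\end{itemize}

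Two smaller points. First, in \eqref{eq:atypical-small-in-expectation} the $\min$ sits in a denominator. So you need a lower bound on $Z$ on a high-probability event, not the upper bound $e^{N\xi(1)/2}$, and the argument is not purely annealed. Second, your plan for \eqref{eq:truncate-quad} is sound. There the two-replica computation involves only the quadratic part (Lemma~\ref{lem:overlap-2}, valid since $\xi''(0)<1$), while $H_{\ge3}$ is averaged in a single replica.
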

\begin{proof}
    The second inequality in \eqref{eq:replica-symmetry} is
    termed ``strictly RS'' in \cite{huang2023constructive}; see Eq. (2.7) therein.
    By Proposition 3.1 of that paper,
    \[
        \EE \int_{T(\delta)} e^{H_{\ge 2}(\bsig)} ~\mu_0(\de \bsig)
        \ge (1-e^{-c_1(\delta) N}) e^{N\xi(1)/2}.
    \]
    (While this proposition states a bound of $(1-o(1)) \exp(N\xi(1)/2)$, its proof shows the $1-o(1)$ is in fact $1-e^{-c_1(\delta) N}$.)
    As $\EE Z_{\ge 2} = \exp(N\xi(1)/2)$, for $Z_{\ge 2} := \int_{S_N} \exp H_{\ge 2}(\bsig) ~\mu_0(\de \bsig)$,
    this implies Eq.~\eqref{eq:atypical-small-in-expectation}.

    By Markov's inequality, with probability $1-e^{-c_1(\delta) N/5}$,
    \[
        \int_{T(\delta)^c} e^{H_{\ge 2}(\bsig)} ~\mu_0(\de \bsig)
        \le e^{-4c_1(\delta) N/5} \EE Z_{\ge 2}.
    \]
    By \cite[Proposition 2.3]{talagrand2006spherical}, \eqref{eq:replica-symmetry} implies that $\fr1N \log Z_{\ge 2} \to_p \xi(1)/2$.
    By standard concentration properties of $\fr1N \log Z_{\ge 2}$, with probability $1-e^{-c_2(\delta) N}$,
    \[
        Z_{\ge 2} \ge e^{-c_1(\delta) N/5} \EE Z_{\ge 2}.
    \]
    On the intersection of these events,
    \[
        \int_{T(\delta)^c} e^{H_{\ge 2}(\bsig)} ~\mu_0(\de \bsig)
        \le e^{-3c_1(\delta) N/5} Z_{\ge 2}.
    \]
    Finally, set $u(\delta) = c_1(\delta) / 5$, so that for all $\bsig \in S_N$,
    \[
        |H(\bsig) - H_{\ge 2}(\bsig)| = |\la \bu, \bsig \ra| \le c_1(\delta) N/5.
    \]
    Thus
    \[
        \int_{T(\delta)^c} e^{H(\bsig)} ~\mu_0(\de \bsig)
        \le e^{-c_1(\delta) N/5}
        \int_{S_N} e^{H(\bsig)} ~\mu_0(\de \bsig).
    \]
    The conclusion (\ref{eq:truncate}) follows with $c(\delta) = \min(c_1(\delta)/6, c_2(\delta)/2)$.

    The same planted estimate, tilting only by $e^{H_2(\bsig)}$, gives $\EE Z_{\le2}(T(\delta)^c)\le e^{-cN+N\xi_{\le2}(1)/2}$ after decreasing $u(\delta)$. Finally, from Markov's inequality, we have with probability $1-e^{-c_3(\delta)N}$ that $$Z_{\le 2}(T(\delta)^c) \le e^{-c_3(\delta)N} e^{N\xi_{\le2}(1)/2}.$$ Then (\ref{eq:truncate-quad}) follows from standard concentration properties.
\end{proof}

The next lemma states that we can anneal over terms
of degree higher than $2$ in the Hamiltonian. This will be the most important
technical result of the section.
\begin{lem}
\label{lem:high-acc-anneal}
Let $H(\bsig)$ be defined as per Eq.~\eqref{eq:FirstLocal} and define
 $T=T(\delta)$ as in Eq.~\eqref{eq:TypicalDef}. Assume that $\|\bu\| \le N^{c_0}$ for $c_0$ sufficiently small given $\xi, L, k$.
Under assumption \eqref{eq:replica-symmetry},
for all $L,k>0$
and $\eps>0$, there exists $C=C(L,k)>0$ such that
the following holds with probability at least $1-\exp(-N/C)$
\begin{align}
\bbE_{\ge3}\Big\{\big(Z(T)-\bbE_{\ge3}Z(T)\big)^{2k}\Big\}
\le C\, N^{-L/2}\, \big(\bbE_{\ge3}Z(T)\big)^{2k}\, ,
\label{eq:ZT_MomentBound}
\end{align}
and further
\begin{align}\label{eq:HighAcc-PartitionFun}
\PP\Big\{\big| Z-\bbE_{\ge3}Z(T) \big|>\eps\, \bbE_{\ge3}Z(T)\Big\} \le C \eps^{-2k}N^{-L/2}  + e^{-N/C}.
\end{align}
We also have, with probability at least $1-\exp(-N/C)$,
\begin{align}\label{eq:Z3T}
    \bbE_{\ge3} Z(T) = (1+O(e^{-N/C})) \bbE_{\ge3} Z.
\end{align}
Further, letting $(\bv_k)_{k\le N}$ be the basis of
eigenvectors of $\bW_2$, for each $i\in [N]$,
we have
\begin{align}\label{eq:Magn-No-Field}
&\PP\left\{\frac{\left|\int_T\langle\bv_i,\bsig\rangle e^{H(\bsig)}\mu_0(\de\bsig)\right|}{\bbE_{\ge3}Z}
\ge N^\eps(1+\|\bu\|)^{Ck}(|\langle\bv_i,\bu\rangle|+CN^{-1/2})\right\}\nonumber\\
&\hspace{3cm}\le C(N^{-2\eps k}+e^{-N/C}).
\end{align}
\end{lem}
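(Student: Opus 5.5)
We condition on $\bW^{(2)}$ and $\bu$ and treat $H_{\ge3}$ as the only remaining randomness, with $\bbE_{\ge3}$ denoting expectation over it. The plan is to compute the $2k$-th central moment of $Z(T)$ by expanding it over $2k$ replicas. Since $T$ depends on the full disorder, we first replace it by a deterministic surrogate. Using \cref{lem:truncate} (in particular \eqref{eq:truncate-quad} and \eqref{eq:atypical-small-in-expectation}), we show that the contribution of atypical configurations is exponentially small both in $Z$ and in $\bbE_{\ge3}Z$; this already gives \eqref{eq:Z3T}. The point is that $\bbE_{\ge3}$ of the integral over $T^c$ is $e^{-cN}$ relative to $\bbE_{\ge3}Z$, by the Gaussian computation $\bbE_{\ge3}e^{H_{\ge3}(\bsig)}=e^{N\xi_{\ge3}(1)/2}$.

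Next we write
\[
\bbE_{\ge3}(Z(T)-\bbE_{\ge3}Z(T))^{2k}=\sum_{S\subseteq[2k]}(-1)^{2k-|S|}\,(\bbE_{\ge3}Z(T))^{2k-|S|}\,\bbE_{\ge3}Z_{|S|}(T^{|S|}).
\]
Each term is expanded as an integral over replicas. By Lemmas \ref{lem:key-error}, \ref{lem:OrthogonalTuples} and \ref{lemma:A-T-comparison} (as announced in the overview), we restrict to $A_m(T,\delta)$: tuples with a pair overlap larger than $\delta$ contribute $e^{-cN}$, using the definition of $T$ and \cref{lem:overlap-2}. On $A_m(\delta)$, Gaussian averaging of $H_{\ge3}$ gives
\[
\exp\Big(\tfrac{mN}{2}\xi_{\ge3}(1)+N\sum_{i<j}\xi_{\ge3}(\<\bsig^i,\bsig^j\>_N)\Big).
\]
We Taylor expand $e^{N\xi_{\ge3}(q)}=\sum_r$ of polynomial terms in $N^{1-r/2}\<\bsig^i,\bsig^j\>^r$ with $r\ge3$, up to an order depending on $L$, with an error controlled by $|q|\le\delta$ small. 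The resulting overlap moments under the quadratic Gibbs measure are bounded by \cref{lem:laplace-moments} (with $k=0$ field coordinates): a product over edges of $\<\cdot,\cdot\>^{r_\ell}$ has size $N^{R/2}(1+\|\bu\|)^{2R}$. Multiplying by $N^{L'-R/2}\cdot N^{-?}$ requires care, because each edge carries a factor $N\cdot N^{-r}$, giving $N^{1-r_\ell/2}$ per edge with $r_\ell\ge3$, i.e.\ at most $N^{-1/2}$ per edge.

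In the alternating sum over $S$, every term in which some replica is not incident to any edge cancels exactly, by inclusion–exclusion. Hence only edge-multigraphs covering all $2k$ vertices survive. Such graphs have at least $k$ edges, giving $N^{-k/2}$. To reach $N^{-L/2}$ we instead iterate with the moment $k$ chosen large: we take the $2k'$-th moment with $k'\ge L$ and then use the Jensen/Hölder monotonicity of moments. The removal of $T$-indicator corrections, which are not product-form, is handled by Lemma \ref{lemma:A-T-comparison} at cost $e^{-cN}$. This yields \eqref{eq:ZT_MomentBound}.

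Then \eqref{eq:HighAcc-PartitionFun} follows by Markov's inequality applied to \eqref{eq:ZT_MomentBound}, together with $Z-Z(T)\le e^{-cN}Z$ from \eqref{eq:truncate} and \eqref{eq:Z3T}. For \eqref{eq:Magn-No-Field}, we repeat the same computation with the integrand $\<\bv_i,\bsig\>$ inserted. We bound $\bbE_{\ge3}$ of its $2k$-th power, which expands into $2k$ replicas each carrying $\sigma^j_i$ together with overlap edges. \cref{lem:laplace-moments}, with $h=i$, bounds this by $\max_a|u_i|^{2k-2a}N^{-a/2}(1+\|\bu\|)^{Ck}$ relative to $(\bbE_{\ge3}Z)^{2k}$. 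This is $\le C(1+\|\bu\|)^{Ck}(|u_i|+N^{-1/2})^{2k}$, and Markov's inequality at threshold $N^{\eps}$ gives the claim.

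The main obstacle is the cancellation step: making sure that the restriction to $T$ and to near-orthogonal tuples, which is not multiplicative across replicas, does not destroy the inclusion–exclusion cancellation of non-covering graphs beyond $e^{-cN}$ errors. A second difficulty is that the Taylor remainder must be controlled uniformly on $A_m(\delta)$ using the strict replica-symmetry margin.
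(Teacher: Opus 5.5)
Your architecture is the paper's: binomial expansion of the central moment, reduction to typical near-orthogonal tuples, Gaussian averaging over $H_{\ge3}$, Taylor expansion of $\exp(N\sum_{i<j}\xi_{\ge3}(\langle\bsig^i,\bsig^j\rangle_N))$, inclusion--exclusion, and \cref{lem:laplace-moments} plus Markov for the magnetization. However, the step you use to reach $N^{-L/2}$ is invalid. Write $X=Z(T)-\bbE_{\ge3}Z(T)$. Monotonicity of moments gives $\bbE_{\ge3}|X|^{2k}\le(\bbE_{\ge3}|X|^{2k'})^{k/k'}$. So a bound $CN^{-k'/2}(\bbE_{\ge3}Z(T))^{2k'}$ with $k'\ge L$ returns only $C'N^{-k/2}(\bbE_{\ge3}Z(T))^{2k}$: the rate per unit of moment order never improves. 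No other argument can do better either. Every power of the overlap of two i.i.d.\ replicas has nonnegative mean, and up to exponentially small errors the normalized variance is the quadratic-model average of $e^{N\xi_{\ge3}(\langle\bsig^1,\bsig^2\rangle_N)}-1$. Hence that variance is at least of order $\beta_3^4N^2\cdot N^{-3}=\beta_3^4N^{-1}$, so \eqref{eq:ZT_MomentBound} cannot hold for all $L$ at fixed $k$. The paper's proof accordingly fixes $0<L<k$. With that restriction your covering count already suffices. The paper is even more economical: it truncates the Taylor series at degree $k-1$, so every monomial touches at most $2k-2$ replicas and \emph{all} terms cancel. Only the Taylor remainder and exponentially small restriction errors are left, and \cref{lem:laplace-moments} is not needed for the partition function.

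Two further points. First, the Taylor remainder cannot be controlled uniformly on $A_m(\delta)$, where $N\xi_{\ge3}(q)$ can be as large as $N\xi_{\ge3}(\delta)$. The replica-symmetry margin is not what tames it; that margin enters only through typicality (\cref{lem:truncate,lem:key-error}). The paper first discards $A_{2k}(\delta)\setminus A_{2k}(N^{-1/2+c})$ by part 2 of \cref{lem:OrthogonalTuples}, which rests on the overlap tails $e^{-cNt^2}$ of the quadratic model (\cref{lem:overlap-2}) beating $e^{CNt^3}$. It then expands only where $N\xi_{\ge3}=O(N^{-1/2+3c})$, which is why the remainder is $O(N^{-k/2+3ck})$ and $c$ must satisfy $3ck<(k-L)/2$. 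With this ordering, the obstacle you flag disappears. The restrictions are made term by term with relative errors $e^{-cN}$: \cref{lemma:A-T-comparison} compares $Z(T)^r$ with $Z_r(A_r(T,\delta))$, and \cref{lem:key-error}, with Markov in $\bW^{(2)}$ and part 3 of \cref{lem:OrthogonalTuples}, removes the $T$-indicators. After that the main terms are product-form and inclusion--exclusion is exact.

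Second, your magnetization count is off. A term with $\ell$ overlap edges is bounded by $|u_i|^{2k-2a}N^{-\ell/2-a/2}$ with $a\le\min(k,\ell)$, hence by $|u_i|^{2k-2a}N^{-a}$. With your $N^{-a/2}$, the case $a=k$ gives only $N^{-k/2}$, i.e.\ the $N^{-1/4}$ scale, rather than $(|u_i|+N^{-1/2})^{2k}$.
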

Before proving this lemma, we state and prove a number of key estimates.

Our first lemma establishes that (in expectation) the partition function
in $A_{2k}(\delta)$ is dominated by the subset $A_{2k}(\delta,T)$.
\begin{lem}[Orthogonal frames are mostly typical]\label{lem:key-error}
    Define
 $T=T(\delta)$ as in Eq.~\eqref{eq:TypicalDef}. We have for $\delta>0$ sufficiently small and appropriate $c, c'>0$ that, if $\|\bu\| \le c'\sqrt{N}$,
    \begin{align}\label{eq:key-error}
    \bbE\, Z_{2k}\big(\{(\bsig^i)_{i\le 2k}\in A_{2k}(\delta): \bsig^1\in T^c\big\} \big)
    \le e^{-cN}  \bbE\, Z_{2k}\big(A_{2k}(\delta)\big).
\end{align}
As a consequence,
 \begin{align}\label{eq:key-error-2}
    \bbE\, Z_{2k}\big( A_{2k}(\delta,T) \big)
    \ge (1-e^{-cN})  \bbE\, Z_{2k}\big(A_{2k}(\delta)\big).
\end{align}
\end{lem}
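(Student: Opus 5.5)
The plan is to reduce \eqref{eq:key-error} to a single-replica statement by conditioning on $\bsig^1$ and using that the other $2k-1$ replicas interact with $\bsig^1$ only weakly on $A_{2k}(\delta)$.

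\textbf{Step 1 (expected partition function as an overlap integral).} Since $H_{\ge 2}$ is a centered Gaussian field with covariance $N\xi(\la\bsig,\bsig'\ra_N)$, we have
\[
\bbE\, e^{\sum_i H(\bsig^i)}=\exp\Big(\sum_i\la\bu,\bsig^i\ra+\tfrac N2\sum_{i,j}\xi(\la\bsig^i,\bsig^j\ra_N)\Big).
\]
On $A_{2k}(\delta)$ the cross terms are at most $\binom{2k}{2}N\xi(\delta)=O(k^2N\delta^2)$. Hence, up to a factor $e^{\pm Ck^2\delta^2 N}$, the expectation factorizes into a product of one-replica weights restricted to the near-orthogonal set. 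We use this factorization \emph{only} to reduce to $\bsig^1$. The event $\{\bsig^1\in T^c\}$ is not Gaussian-measurable in a way that lets us take the expectation inside, so we cannot simply compute $\bbE$ pointwise in $\bsig$.

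\textbf{Step 2 (tilting by the other replicas).} Instead, write
\[
\bbE\, Z_{2k}(\{\bsig^1\in T^c\}\cap A_{2k}(\delta))\le \bbE\Big[\int_{T^c}e^{H(\bsig^1)}\,\Phi(\bsig^1)\,\mu_0(\de\bsig^1)\Big],
\]
where $\Phi(\bsig^1)=\int_{\{\bsig^j\}:\,(\bsig^j)\in A_{2k}(\delta)}\prod_{j\ge2}e^{H(\bsig^j)}$. Then use Cauchy--Schwarz/Hölder:
\[
\bbE\Big[\int_{T^c}e^{H}\,\Phi\Big]\le \Big(\bbE\Big[\Big(\int_{T^c}e^{H}\Big)^{p}\Big]\Big)^{1/p}\Big(\bbE\,\Phi_{\max}^{q}\Big)^{1/q},
\]
or more simply bound $\Phi\le Z^{2k-1}$. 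This uses \eqref{eq:atypical-small-in-expectation} of Lemma~\ref{lem:truncate} together with its high-probability version: with probability $1-e^{-c_2N}$ we have $Z(T^c)\le e^{-c_2N}Z$, and on that event
\[
\int_{T^c}e^{H}\,\Phi\le Z(T^c)\,Z^{2k-1}\le e^{-c_2N}Z^{2k}.
\]
The complementary event contributes at most $\big(\bbP(\text{bad})\big)^{1/2}\big(\bbE Z^{4k}\big)^{1/2}$. Annealed moments satisfy $\bbE Z^{4k}\le e^{C_kN\xi(1)+O(\|\bu\|\sqrt N)}$, which is too large unless the bad event has probability smaller than $e^{-C_kN}$. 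I will circumvent this by taking $\delta$ small and working with the truncated quantity directly.

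\textbf{Step 3 (truncated second-moment/planted argument; main obstacle).} The cleaner route is a planted-model argument mirroring the proof of Lemma~\ref{lem:truncate}. Change measure by $\prod_i e^{H(\bsig^i)}/\bbE[\cdot]$. Under the tilted (planted) law, conditional on $\bsig^1$, the field $H$ has mean $N\xi(\la\bsig^1,\cdot\ra_N)+\sum_{j\ge2}N\xi(\la\bsig^j,\cdot\ra_N)$ and the same covariance. The event $\bsig^1\in T^c$ concerns the mass near overlap $>\delta$ with $\bsig^1$. The extra shifts from $\bsig^j$, $j\ge2$, change $H(\bsig')$ on $\{|\la\bsig',\bsig^1\ra_N|>\delta\}$ by at most $\sum_j N\xi(\la\bsig^j,\bsig'\ra_N)$, which is not small uniformly. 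This is the main obstacle. I would handle it by noting that the planted field with $2k$ near-orthogonal spikes is, by Proposition~3.1 of \cite{huang2023constructive} applied with the multi-spike tilt, still strictly RS for $\delta$ small: the function $\xi(q)+q+\log(1-q)$ has a strict negative margin away from $0$, and perturbing by $O(k\xi(\delta))$ keeps it negative. Consequently the planted probability that $\bsig^1$ is atypical is $e^{-cN}$. Finally, comparing $\bbE Z_{2k}(A_{2k}(\delta))$ with the planted normalization costs only $e^{O(k^2\delta^2N)}$, which is absorbed by choosing $\delta$ small relative to $c$.

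\textbf{Step 4.} \eqref{eq:key-error-2} follows by a union bound over the $2k$ replicas using symmetry, since $A_{2k}(\delta)\setminus A_{2k}(\delta,T)\subseteq\bigcup_i\{\bsig^i\in T^c\}$.
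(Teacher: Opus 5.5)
Your framework matches the paper's. The exact change of measure by $\prod_i e^{H(\bsig^i)}$ reduces \eqref{eq:key-error} to a uniform bound over $(\bsig^i)\in A_{2k}(\delta)$: the set $\{\brho:|\langle\brho,\bsig^1\rangle_N|>\delta\}$ must have free energy strictly below $\xi(1)/2$ under the $2k$-spike planted field. The paper implements this change of measure by conditioning on $\sum_i H_{\ge2}(\bsig^i)$ and integrating against its Gaussian law. Because it is exact, no factor $e^{O(k^2\delta^2N)}$ arises. Had one arisen, it could not be ``absorbed by choosing $\delta$ small relative to $c$'', since the gain $c=c(\delta)$ is itself of order $\delta^2$.

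The genuine gap is in Step 3, at exactly the obstacle you identified. You resolve it by saying the other spikes perturb by $O(k\xi(\delta))$, so the single-spike margin survives. This contradicts your own observation that $N\xi(\langle\brho,\bsig^j\rangle_N)$ is of order $N$ when $\brho$ correlates with $\bsig^j$. Even where all overlaps are of order $\delta$, the extra gain $(2k-1)\xi(\delta)$ is of the same order $\delta^2$ as the margin available at $q_1=\delta$, with a constant growing in $k$. So it is not a small perturbation. There is also no multi-spike version of the single-spike proposition to cite; that is precisely what has to be proved.

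The missing idea is to charge the entropy to the projection of $\brho$ onto $\mathrm{span}(\bsig^1,\dots,\bsig^{2k})$ rather than to $q_1$. The paper fixes all overlaps $q_i=\langle\brho,\bsig^i\rangle_N$ and writes $\brho=\bx+\sqrt{1-\tilde q^2}\,\btau$, with $\bx$ in the span and $\tilde q=\|\bx\|_N$. It then uses \eqref{eq:replica-sym}: at the typical value of the conditioned energy, the band free energy is at most $\sum_i\xi(q_i)+\frac12\xi(1)-\frac12\xi(\tilde q)+\frac12\tilde q+\frac12\log(1-\tilde q)+o_N(1)$. Thus the spikes contribute $\sum_i\xi(q_i)$, while the cost depends only on $\tilde q$.

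Three further ingredients close the argument.
\begin{itemize}
\item The Gram-matrix (Bessel) bound $\sum_iq_i^2\le(1+(2k-1)\delta)\tilde q^2$, together with $|q_i|\le\tilde q$ and $p\ge2$, gives $\sum_i\xi(q_i)\le\xi(\tilde q)+C_k\delta\tilde q^2$. So the $2k$ near-orthogonal spikes act as a single spike of overlap $\tilde q$. In your example, the extra gain is paid for by the extra entropy of constraining $2k$ overlaps.
\item The quadratic margin $\xi(u)+u+\log(1-u)\le-\kappa u^2$ is needed, not merely strict negativity away from $0$. This uses $\xi''(0)<1$.
\item That margin absorbs the $C_k\delta\tilde q^2$ error uniformly over $\tilde q\ge|q_1|>\delta$, once $C_k\delta\le\kappa/4$.
\end{itemize}
Without this reduction, Step 3 does not show that the planted probability of $\{\bsig^1\in T^c\}$ is $e^{-cN}$. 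Step 4 is fine.
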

\begin{proof}
    We have
    \begin{align*}
        \bbE\Big\{H_{\ge2}(\brho) \Big| \sum_{i=1}^{2k}H_{\ge2}(\bsig^i)\Big\} &= \frac{\bbE H_{\ge2}(\brho)\sum_{i=1}^{2k}H_{\ge2}(\bsig^i)}{\bbE (\sum_{i=1}^{2k}H_{\ge2}(\bsig^i))^2} \sum_{i=1}^{2k} H_{\ge2}(\bsig^i)\\
        &= \frac{\sum_{i=1}^{2k}\xi(\langle \brho,\bsig^i\rangle_N)}{\sum_{i,j\in [2k]}\xi(\langle \bsig^i,\bsig^j\rangle_N)} \sum_{i=1}^{2k} H_{\ge2}(\bsig^i),
    \end{align*}
    and for $\widehat{H}(\brho) = H_{\ge2}(\rho)-\bbE[H_{\ge2}(\brho) | \sum_{i=1}^{2k}H_{\ge2}(\bsig^i)]$,
    \begin{align*}
        \frac{1}{N}\bbE\big[\widehat{H}(\brho^1)\widehat{H}(\brho^2)\big] = \xi(\langle \brho^1,\brho^2\rangle_N) - \frac{(\sum_{i=1}^{2k} \xi(\langle\brho^1,\bsig^i\rangle_N)(\sum_{i=1}^{2k} \xi(\langle\brho^2,\bsig^i\rangle_N)}{\sum_{i,j\in [2k]}\xi(\langle \bsig^i,\bsig^j\rangle_N)}.
    \end{align*}

    For each $|q_1| \ge \delta$ and $q_2,\dots,q_{2k}\in [0,1]$, consider the band $\Band_*(\{\bsig^i\})$
    of vectors $\brho$ with $\langle \brho,\bsig^i\rangle_N = q_i$ for all $i\in [2k]$. Write $\brho = \bx + \sqrt{1-\tilde{q}^2} \btau$ where $\bx \in \mathrm{span}(\bsig^1,\dots,\bsig^{2k})$ and $\|\btau\|^2=N$, $\btau\perp \mathrm{span}(\bsig^1,\dots,\bsig^{2k})$. Define the process $\overline{H}(\btau) = \widehat{H}(\brho)$, which is a $p$-spin model with corresponding mixture $\txi(t) = \txi(t;\bq,(\bsig^i)_{i=1}^{2k})$
    given by
     \begin{align*}
\txi(t;\bq,(\bsig^i)_{i=1}^{2k}) = \xi(\tilde{q}^2+(1-\tilde{q}^2)t\big) -\frac{\big(\sum_{i=1}^{2k}
\xi(q_i)\big)^2}{\sum_{i,j\in [2k]}\xi(\langle \bsig^i,\bsig^j\rangle_N)}\, .
     \end{align*}

We define the free energy
  \begin{align*}
  \Phi(\bq; (\bsig^i)_{i=1}^{2k}) := \frac{1}{N}\log\int_{\Band_*(\{\bsig^i\})} e^{H_{\ge2}(\brho)}\mu_0(\de\brho)\, .
  \end{align*}
 Following the proof of Lemma 3.3 of \cite{huang2023constructive}, we see that the replica-symmetric bound implies
 that the following holds with high probability:
    \begin{align}\label{eq:replica-sym}
        \Phi(\bq; (\bsig^i)_{i=1}^{2k}) \le \frac{\sum_{i=1}^{2k}\xi(q_i)}{\sum_{i,j\in [2k]}\xi(\langle \bsig^i,\bsig^j\rangle_N)} \frac{1}{N}\sum_{i=1}^{2k} H_{\ge2}(\bsig^i) + \frac{1}{2}\xi(1) - \frac{1}{2}\xi(\tilde{q}) +\frac{1}{2}\tilde{q} + \frac{1}{2} \log(1-\tilde{q}) + o_N(1).
    \end{align}
    Let $\bG=(\langle\bsig^i,\bsig^j\rangle_N)_{i,j\le2k}$.
The generalized Bessel inequality, applied to the operator with columns
$\bsig^i/\sqrt N$, gives
\begin{align*}
 \sum_{i=1}^{2k}\langle\bx,\bsig^i\rangle_N^2
 &\le \lambda_{\max}(\bG)\|\bx\|_N^2
 \le \bigl(1+(2k-1)\delta\bigr)\|\bx\|_N^2.
\end{align*}
Here the last inequality follows from the unit diagonal and the bound
$|G_{ij}|\le\delta$ for $i\ne j$. Thus, with $D=1+(2k-1)\delta$,
\[
 \tilde q^2=\|\bx\|_N^2\ge D^{-1}\sum_{i=1}^{2k}q_i^2.
\]
Since $q_i=\langle\bx,\bsig^i\rangle_N$, we also have
$|q_i|\le\|\bx\|_N=\tilde q$ for each $i$. Hence, for every $p\ge2$,
\[
 \sum_{i=1}^{2k}|q_i|^p
 \le\tilde q^{p-2}\sum_iq_i^2\le D\tilde q^p.
\]
Summing against the nonnegative mixture coefficients gives
\[
 \sum_{i=1}^{2k}\xi(q_i)
 \le\sum_i\xi(|q_i|)\le D\xi(\tilde q)
 \le\xi(\tilde q)+(2k-1)\delta\,\xi(1)\tilde q^2.
\]
Thus the error is $C_k\delta\tilde q^2$, uniformly under
\eqref{eq:uniform-mixtures}, without evaluating $\xi$ beyond $1$.
Moreover, strict
replica symmetry and $\xi''(0)<1$ imply, for some $\kappa>0$,
\[
 \xi(u)+u+\log(1-u)\le-\kappa u^2\qquad(0<u<1).
\]
Let $U=\sum_{i,j\le2k}\xi(\langle\bsig^i,\bsig^j\rangle_N)$.
At $E=U$, substituting the preceding Bessel estimate into
\eqref{eq:replica-sym} therefore gives
\[
 \Phi(\bq;(\bsig^i)_{i\le2k})
 \le\frac12\xi(1)-\frac\kappa2\tilde q^2
       +C_k\delta\tilde q^2+o_N(1).
\]
Choose $\delta$ so small that $C_k\delta\le\kappa/4$.
If $|q_1|\ge\delta$, then $\tilde q\ge\delta/\sqrt D$.
The perturbation $|E-U|\le\eta\sqrt U$ costs at most $C_k\eta$:
indeed, $|\xi(r)|\le r^2\xi(1)$ for $|r|\le1$, so for small
$\delta$ we have $U\ge k\xi(1)$ and
$\sum_i|\xi(q_i)|/\sqrt U\le2\sqrt k\sqrt{\xi(1)}$.
This bound is uniform under \eqref{eq:uniform-mixtures}; the zero
mixture is immediate without Gaussian conditioning.
Taking $\eta$ sufficiently small compared with $\kappa\delta^2/D$
thus preserves the strictly negative gap used below.
With $U$, $\delta$, and $\eta$ as above, given $\sum_{i=1}^{2k}H_{\ge2}(\bsig^i) = EN$
    where $|E - U|\le \eta\sqrt{U}$, whenever $|q_1| \ge \delta$, we have by assumption \eqref{eq:replica-symmetry}, with high probability
    \begin{align*}
\Phi(\bq; (\bsig^i)_{i=1}^{2k}) \le \frac{1}{2}\xi(1)-10\eta.
    \end{align*}
    Integrating over the $(q_i)_{i\le 2k}$ and using Gaussian concentration,
    we deduce that for $|E-U|\le  \eta\sqrt{U}$,
     we have
    \begin{align*}
    \bbP\left\{\left.\int_{\brho:|\langle\brho,\bsig_1\rangle_N| > \delta} e^{H_{\ge2}(\brho)}\mu_0(\de\brho) \le e^{N(\xi(1)/2-9\eta)}\right| \sum_{i=1}^{2k}H_{\ge2}(\bsig^i) = EN\right\} \ge 1-e^{-c(\eta)N}.
    \end{align*}

    Up until now we worked with the Hamiltonian $H_{\ge 2}(\bsig)$, which does not include the term linear in $\bsig$.
    Recall that $H(\bsig) = \<\bu,\bsig\>+H_{\ge 2}(\bsig)$ and $\|\bu\|\le c'\sqrt{N}$, so $
|H(\bsig) - H_{\ge2}(\bsig)| \le |\langle \bu,\bsig\rangle|\le c'N$. Assuming that $c'<\eta$, we have
    \begin{align*}
    \bbP\left\{\left.\int_{\brho:|\langle\brho,\bsig_1\rangle_N| > \delta} e^{H(\brho)}\mu_0(\de\brho) \le e^{N(\xi(1)/2-8\eta)}\right| \sum_{i=1}^{2k}H_{\ge2}(\bsig^i) = EN\right\} \ge 1-e^{-c(\eta)N}.
    \end{align*}
    Hence, under the same conditions
    \begin{align*}
    \bbP\left\{\bsig^1\in T^c\left|\sum_{i=1}^{2k}H_{\ge2}(\bsig^i) = EN\right.\right\} \le
    e^{-c(\eta)N}\, .
    \end{align*}

Define the event
\begin{align*}
\cE(\{\bsig^i\}) :=\left\{\left|\sum_{i=1}^{2k}H_{\ge2}(\bsig^i) -NU\right| \ge N\eta\sqrt{U}\right\}\, .
\end{align*}
    Thus, since $|H(\bsig) - H_{\ge2}(\bsig)|\le c'N$, we can then conclude that
    \begin{align*}
    &\bbE\left\{ \int_{A_{2k}(\delta):\bsig^1\in T^c} e^{\sum_{i=1}^{2k}H(\bsig^i)} \mu_0^{\otimes 2k} (\de \bsig)
    \right\} =\bbE\left\{ \int_{A_{2k}(\delta)} \bfone_{\bsig^1\in T^c}e^{\sum_{i=1}^{2k}H(\bsig^i)} \mu_0^{\otimes 2k} (\de \bsig)
    \right\}\\
    &=\bbE\left\{ \int_{A_{2k}(\delta)} \bbP\Big\{\bsig^1\in T^c\Big|\sum_{i=1}^{2k}H_{\ge2}(\bsig^i) \Big\}e^{\sum_{i=1}^{2k}H(\bsig^i)} \mu_0^{\otimes 2k} (\de \bsig)
    \right\}\\
        &\le e^{-c(\eta)N+2kc'N} \bbE \int_{A_{2k}(\delta)}  e^{\sum_{i=1}^{2k}H_{\ge2}(\bsig^i)} \mu_0^{\otimes 2k} (\de \bsig)  + e^{2kc'N}\bbE \int_{A_{2k}(\delta)}  e^{\sum_{i=1}^{2k}H_{\ge2}(\bsig^i)}
        \bfone_{\cE(\{\bsig^i\})}\mu_0^{\otimes 2k} (\de \bsig)\\
        &\le e^{-c N} \bbE \int_{A_{2k}(\delta)}  e^{\sum_{i=1}^{2k}H(\bsig^i)} \mu_0^{\otimes 2k} (\de \bsig).
    \end{align*}
    Here we assume $c'<c(\eta)/(8k)$ and $c = c(\eta)/4$, and in the last step we used
    \begin{align*}
    \bbE \int_{A_{2k}(\delta)}  e^{\sum_{i=1}^{2k}H_{\ge2}(\bsig^i)}
        \bfone_{\cE(\{\bsig^i\})}\mu_0^{\otimes 2k} (\de \bsig)\le
        2e^{-\eta^2N/2} \int_{A_{2k}(\delta)} e^{NU(\{\bsig^i\})/2} \mu_0^{\otimes 2k}\de\bsig\, ,
    \end{align*}
    and chose $\delta$, $s$ suitably small.
\end{proof}

The next lemma shows that integrals over $S_N^{2k}$ with respect to the product Gibbs measure
are very precisely approximated by an integral over tuples that are very close to orthogonal.
\begin{lem}[Near-orthogonal tuples dominate]\label{lem:OrthogonalTuples}
    For $\delta>0$ sufficiently small and appropriate $c,c_0>0$, if $\|\bu\| \le N^{c_0/2}$, then
    with probability $1-e^{-cN}$ over $\bW^{(2)}$, the following holds:
\begin{enumerate}
\item For quadratic Hamiltonians, the unrestricted partition function of $2k$ replicas is dominated
by its restriction to $A_{2k}(N^{-1/2+c})$:
    \begin{align}\label{eq:quad-error-1}
        Z_{\le 2,2k}\big(A_{2k}(N^{-1/2+c})\big) \ge (1-e^{-N^{c}})\cdot \big(Z_{\le 2}\big)^{2k}.
    \end{align}
\item The contribution of $A_{2k}(\delta)\setminus A_{2k}(N^{-1/2+c}) =\{(\bsig^i)_{i\le 2k}:\max_{i\ne j} |\langle \bsig^i,\bsig^j\rangle_N| \in [N^{-1/2+c}, \delta]\}$ is small:
    \begin{align}
&\bbE_{\ge 3} Z_{2k}\big(A_{2k}(\delta)\setminus A_{2k}(N^{-1/2+c})\big) \le e^{-N^{c}+Nk\xi_{\ge 3}(1)}
\big(Z_{\le 2}\big)^{2k}
.\label{eq:quadratic-tail}
\end{align}
\item Annealing the restricted partition function over $H_{\ge 3}$ is roughly equivalent
to complete annealing:
    \begin{align}\label{eq:quad-error-2}
    \bbE_{\ge3}Z_{2k}\big(A_{2k}(\delta)\big)
     \ge e^{-4k(\|\bu\|+1)\sqrt{N}} \bbE Z_{2k}\big(A_{2k}(\delta)\big) .
    \end{align}
\end{enumerate}
\end{lem}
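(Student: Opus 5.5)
We prove the three items in order, in each case reducing to the quadratic estimates of the previous section, chiefly \cref{lem:overlap-2}.

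\textbf{Item 1.} Write $A_{2k}(N^{-1/2+c})^c$ as the union over pairs $i\ne j$ of the events $\{|\<\bsig^i,\bsig^j\>_N|>N^{-1/2+c}\}$. For each pair, integrate out the other $2k-2$ replicas; under the product of the quadratic Gibbs measures they factor exactly. We are then left with the two-replica ratio controlled by \cref{lem:overlap-2}, with $t=N^{-1/2+c}$. Since $\|\bu\|\le N^{c_0/2}$, we have $C\|\bu\|_N\le CN^{-1/2+c_0/2}\ll t$ once $c_0<2c$. Hence each pair contributes at most $\exp(-cN\cdot N^{-1+2c}/4)=\exp(-c'N^{2c})$, and a union bound over $\binom{2k}{2}$ pairs gives \eqref{eq:quad-error-1} after renaming $c$.

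\textbf{Item 2.} Conditionally on $\bW^{(2)}$, the higher-degree terms are independent Gaussians, so
\[
\bbE_{\ge3}e^{\sum_i H_{\ge3}(\bsig^i)}=\exp\Big(\tfrac N2\sum_{i,j}\xi_{\ge3}(\<\bsig^i,\bsig^j\>_N)\Big)=e^{Nk\xi_{\ge3}(1)}\exp\Big(N\sum_{i<j}\xi_{\ge3}(\<\bsig^i,\bsig^j\>_N)\Big).
\]
On $A_{2k}(\delta)$ we have $\xi_{\ge3}(q)\le C|q|^3\le C\delta q^2$. The annealed integrand is therefore bounded by $e^{Nk\xi_{\ge3}(1)}\exp(C\delta N\sum_{i<j}q_{ij}^2)$ times the quadratic Gibbs weights. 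We then split the region according to dyadic shells $\max_{i<j}|q_{ij}|\in[t,2t]$, with $t$ ranging from $N^{-1/2+c}$ up to $\delta$.

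On a shell, the quadratic mass is at most $\exp(-c_*Nt^2)$, obtained from \cref{lem:overlap-2} (plus a union bound and factorization, as in Item 1). The annealing gain is at most $\exp(C k^2\delta N(2t)^2)$. Choosing $\delta$ small relative to the constant $c_*$ of \cref{lem:overlap-2}, the net exponent is at most $-c_*Nt^2/2\le -N^{2c}c_*/2$. Summing the $O(\log N)$ shells gives \eqref{eq:quadratic-tail}.

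The main subtlety in this item is that \cref{lem:overlap-2} bounds only a pairwise overlap. A sub-Gaussian tail in $\sum_{i<j}q_{ij}^2$ is nonetheless needed, since the annealing factor involves all pairs. I would handle this by bounding $\sum_{i<j}q_{ij}^2\le\binom{2k}{2}\max q_{ij}^2$, which reduces everything to the max, exactly as in the shell decomposition.

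\textbf{Item 3.} Item 3 is the step I expect to be hardest, because it requires a lower bound on $\bbE_{\ge3}Z_{2k}(A_{2k}(\delta))$ relative to the full expectation, which also averages over $\bW^{(2)}$. Write
\[
\bbE_{\ge3}Z_{2k}(A_{2k}(\delta))\ge e^{Nk\xi_{\ge3}(1)}Z_{\le2,2k}(A_{2k}(\delta))\,e^{-Ck^2\delta^3N}\,.
\]
This is too lossy, so instead restrict to $A_{2k}(N^{-1/2+c})$, where the correction $N\sum_{i<j}\xi_{\ge3}(q_{ij})=O(N^{-1/2+3c})$ is negligible. By Item 1 we then get
\[
\bbE_{\ge3}Z_{2k}(A_{2k}(\delta))\ge \tfrac12 e^{Nk\xi_{\ge3}(1)}(Z_{\le2})^{2k}\,.
\]
For the other side we compare with $\bbE Z_{2k}(A_{2k}(\delta))$. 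Bound this by $e^{Nk\xi_{\ge3}(1)}\,\bbE[e^{C\delta N\sum q_{ij}^2}\cdots]$, i.e.\ the full expectation of the quadratic $2k$-replica partition function times the same overlap factor. Using the shell argument of Item 2, this is at most $2e^{Nk\xi_{\ge3}(1)}\bbE(Z_{\le2})^{2k}$.

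It remains to show $(Z_{\le2})^{2k}\ge e^{-4k(\|\bu\|+1)\sqrt N}\,\bbE(Z_{\le2})^{2k}$ with probability $1-e^{-cN}$. First, the field costs at most $e^{\pm\|\bu\|\sqrt N}$ per replica, since $|\<\bu,\bsig\>|\le\|\bu\|\sqrt N$; this reduces the claim to the field-free model. In the field-free case, \cref{lem:laplace} gives $\log Z_{\le2}=NG(\gamma_*)-\tfrac N2\log(2e)+O(1)$. The quantity $NG(\gamma_*)$ is a Lipschitz linear spectral statistic of the GOE, so it has $O(1)$ fluctuations with exponential tails at scale $N^{1/2}$ (large deviations of eigenvalues exiting the bulk cost $e^{-cN}$). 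Consequently $\log\bbE(Z_{\le2})^{2k}$ exceeds $2k\log Z_{\le2}$ by at most $O(k\sqrt N)$ on the good event, and the tail contributions from bad spectra are controlled by $\beta_2^2<1/2$ (via Gaussian concentration with a large-deviation bound on $\lambda_{\max}$). Checking this exponential-moment bound for $Z_{\le2}^{2k}$, uniformly in the bad-spectrum tail, is the main technical obstacle of the proof.
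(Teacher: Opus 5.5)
\textbf{Gap in Item~3: the upper bound on $\bbE Z_{2k}(A_{2k}(\delta))$.}

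Items 1 and 2 are essentially the paper's argument: a union bound over pairs plus \cref{lem:overlap-2}, then the annealing identity with a decomposition in $\max_{i\ne j}|\langle\bsig^i,\bsig^j\rangle_N|$, where $N\xi_{\ge3}(q)=O(N|q|^3)$ is beaten by the quadratic overlap tail. Your lower bound $\bbE_{\ge3}Z_{2k}(A_{2k}(\delta))\ge\frac12e^{Nk\xi_{\ge3}(1)}(Z_{\le2})^{2k}$ in Item 3 is also what the paper does.

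The problem is the upper bound on $\bbE Z_{2k}(A_{2k}(\delta))$. You drop the orthogonality restriction and pass through the unrestricted moment $\bbE(Z_{\le2})^{2k}$. The inequality you say remains to be shown, $(Z_{\le2})^{2k}\ge e^{-4k(\|\bu\|+1)\sqrt N}\,\bbE(Z_{\le2})^{2k}$ with probability $1-e^{-cN}$, is false for large $k$ (when $\beta_2>0$). Take $\bu=0$. Then
\[
\bbE(Z_{\le2})^{2k}=e^{Nk\beta_2^2}\int\exp\Big(N\beta_2^2\sum_{i<j}\langle\bsig^i,\bsig^j\rangle_N^2\Big)\mu_0^{\otimes2k}(\de\bsig),
\]
and the overlap matrix $Q$ has density proportional to $\det(Q)^{(N-2k-1)/2}$. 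Testing $Q=(1-q)\bI+q\bfone\bfone^{\top}$ gives the exponent
\[
\beta_2^2k(2k-1)q^2+\frac{2k-1}{2}\log(1-q)+\frac12\log(1+(2k-1)q).
\]
At $q=1/2$ this is strictly positive as soon as $k\ge 2\log 2/\beta_2^2$. Hence $\bbE(Z_{\le2})^{2k}\ge e^{c_kN}e^{Nk\beta_2^2}$, whereas $(Z_{\le2})^{2k}=e^{Nk\beta_2^2+O(k\sqrt N)}$ with probability $1-e^{-cN}$.

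The high moment is a large-deviation quantity, carried by atypical $\bW^{(2)}$ (an outlying top eigenvalue, equivalently replicas with macroscopic overlap). The $e^{-cN}$ cost of a bad spectrum does not pay for the gain in $(Z_{\le2})^{2k}$, so the step ``$\beta_2^2<1/2$ controls the tail'' goes the wrong way. Your preceding step has the same defect: the shell bound of Item 2 holds only on a high-probability $\bW^{(2)}$-event, so it cannot be used inside the $\bW^{(2)}$-average without controlling exactly this event. Since the lemma is needed for arbitrarily large $k$, this is not a technicality.

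\textbf{The missing idea: keep the restriction inside the full average.} Average over $\bW^{(2)}$ only over near-orthogonal tuples. Then
\[
\bbE Z_{2k}(U)=e^{Nk\xi(1)}\int_U e^{\sum_i\langle\bu,\bsig^i\rangle}\exp\Big(N\sum_{i<j}\xi(\langle\bsig^i,\bsig^j\rangle_N)\Big)\mu_0^{\otimes2k}(\de\bsig)
\]
is an explicit deterministic integral.

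On $A_{2k}(N^{-1/2+c})$ the overlap factor is $e^{O(k^2N^{2c})}$, which gives the bound in \eqref{eq:bbE-up}. You then compare with the deterministic quantity $e^{Nk\beta_2^2}$, not with $\bbE(Z_{\le2})^{2k}$, via the high-probability lower bound \eqref{eq:bbE-low} from \cref{lem:laplace}. Your Lipschitz linear-statistic reasoning for $NG(\gamma_*)$ is the right tool at this point, because it concerns the typical value of $\log Z_{\le2}$.

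For the annulus $A_{2k}(\delta)\setminus A_{2k}(N^{-1/2+c})$, use
\[
\det(Q)^{(N-2k-1)/2}\le\exp\Big\{-\frac{N-2k-1}{2}(1-C_k\delta)\sum_{i<j}q_{ij}^2\Big\}
\]
on $A_{2k}(\delta)$, together with $\xi(q)\le(\beta_2^2+C\delta)q^2$. Since $\beta_2^2<1/2$, the integrand there is suppressed by $e^{-cN^{2c}}$. It is exactly the restriction $|q_{ij}|\le\delta$ that excludes the macroscopic-overlap configurations dominating $\bbE(Z_{\le2})^{2k}$.
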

\begin{proof}
    \noindent{\bf Proof of 1.} By Lemma \ref{lem:overlap-2}, there are constants $c_1,C_1>0$ such that, with probability $1-e^{-cN}$ over $\bW^{(2)}$,
    \begin{align*}
    Z_{\le 2, 2k}\big(S_N^{2k}\setminus A_{2k}(N^{-1/2+c})\big)
        &\le \sum_{i\neq j}\int_{S_N^{2k}}\bfone_{|\<\bsig^i,\bsig^j\>_N|>N^{-1/2+c}}e^{\sum_{i=1}^{2k}H_{\le2}(\bsig^i)} \mu_0^{\otimes 2k}(\de\bsig)\\
        &\le e^{-N^c} \int_{S_N^{2k}}e^{\sum_{i=1}^{2k}H_{\le2}(\bsig^i)} \mu_0^{\otimes 2k}(\de\bsig),
    \end{align*}
    yielding (\ref{eq:quad-error-1}).

    \noindent{\bf Proof of 2.} By a direct calculation, for any set $U\subseteq (S_N)^{2k}$:
    \begin{align*}
        \bbE_{\ge3} Z_{2k}\big(U\big)
        =e^{Nk\xi_{\ge 3}(1)}\int_{U} e^{\sum_{i=1}^{2k}H_{\le2}(\bsig^i)} \exp\lt(N\sum_{i<j\le2k} \xi_{\ge3}(\langle \bsig^i,\bsig^j\rangle_N)\rt)\mu_0^{\otimes 2k} (\de \bsig)\,.
    \end{align*}

    Applying Lemma \ref{lem:overlap-2}, we have for $t>0$ and $\eps_N = N^{-1/2+c}$ that, with probability $1-e^{-cN}$ over $\bW^{(2)}$,
\begin{align}
&\frac{1}{(Z_{\le 2})^{2k}} e^{-Nk\xi_{\ge 3}(1)}\bbE_{\ge 3}Z_{2k}
\Big(A_{2k}(t+\eps_N)\setminus A_{2k}(t)\Big)\le\\
&\frac{1}{(Z_{\le 2})^{2k}}\int_{\max_{i\ne j} |\langle \bsig^i,\bsig^j\rangle_N| \in [t,t+\eps_N]}\exp\lt(\sum_{i=1}^{2k}H_{\le2}(\bsig^{i})+N\sum_{i< j\le 2k}\xi_{\ge3}(\langle\bsig^{i},\bsig^{j}\rangle_N)\rt)\mu_0^{\otimes 2k}(\de\bsig) \le \nonumber \\
&\phantom{AAAAAAAAA AAAAAAAA} \le \exp\lt\{-cN\big(t-\|\bu\|_N^2\big)_+^2+N(2k)^2\xi_{\ge3}(t+\eps_N)\rt\}\, .
\end{align}
Under the assumption $\|\bu\|_N^2\le N^{c_0-1}$, $c_0<c+1/2$, summing over the range $N^{-1/2+c} < |t|\le\delta$, we obtain the following
with probability $1-e^{-cN}$ over $\bW^{(2)}$,
\begin{align*}
&\int_{\max_{i\ne j} |\langle \bsig^i,\bsig^j\rangle_N| \in [N^{-1/2+c}, \delta]}\exp\lt(\sum_{i=1}^{2k}H_{\le2}(\bsig^{i})+N\sum_{i< j\le 2k}\xi_{\ge3}(\langle\bsig^{i},\bsig^{j}\rangle_N)\rt)\mu_0^{\otimes 2k}(\de\bsig)\nonumber\\
&\le \exp(-N^{c})\int_{S_N^{2k}}\exp\lt(\sum_{i=1}^{2k}H_{\le2}(\bsig^{i})\rt)
\mu^{\otimes 2k}_0(\de\bsig).
\end{align*}
This gives (\ref{eq:quadratic-tail}).

 \noindent{\bf Proof of 3.} Note that
\begin{align}
e^{-Nk\xi_{\ge 3}(1)}\bbE_{\ge 3} &Z_{2k}\big( A_{2k}(N^{-1/2+c})\big) =
\nonumber\\
 &=\int_{A_{2k} (N^{-1/2+c})}\exp\lt(\sum_{i=1}^{2k}H_{\le2}(\bsig^{i})+N\sum_{i< j\le 2k}\xi_{\ge3}(\langle\bsig^{i},\bsig^{j}\rangle_N)\rt)\mu_0^{\otimes 2k}(\de\bsig)\nonumber\\
&=(1+O(N^{-1/2+3c}))\cdot Z_{\le 2, 2k}\Big(A_{2k} (N^{-1/2+c})\Big).\label{eq:quad-last}
\end{align}
Therefore, using Eq.~\eqref{eq:quad-error-1}, we get
\begin{align}
\bbE_{\ge 3} &Z_{2k}\big( A_{2k}(N^{-1/2+c})\big) =(1+O(N^{-1/2+3c}))e^{Nk\xi_{\ge 3}(1)}\big(Z_{\le2}\big)^{2k}
\, .\label{eq:Z2kZ2k}
\end{align}

Also,
\begin{align}
    &\bbE Z_{\le 2, 2k}\Big(A_{2k} (N^{-1/2+c})\Big)  \le e^{2k\|\bu\|\sqrt{N}} \exp\big(k\beta_2^2N+(2k)^2\beta_2^2N^{2c}\big). \label{eq:bbE-up}
\end{align}
On the other hand, Lemma \ref{lem:laplace} readily implies that with probability at least $1-e^{-cN}$,
\begin{align}
   (Z_{\le 2})^{2k} &\ge e^{-k\sqrt{N}} \exp(k\beta_2^2N). \label{eq:bbE-low}
\end{align}
Combining Eqs.~\eqref{eq:bbE-up} and \eqref{eq:bbE-low}, we get
\begin{align}
    &\bbE Z_{\le 2, 2k}\big(A_{2k} (N^{-1/2+c})\big)
    \le e^{2k(1+\|\bu\|)\sqrt{N}}   (Z_{\le 2})^{2k} \, .
    \end{align}
Finally, using Eq.~\eqref{eq:Z2kZ2k} together with the last display, we get
 \begin{align}\label{eq:quad-error-2-ALMOST}
    \bbE_{\ge3}Z_{2k}\big(A_{2k}(N^{-1/2+c})\big)
     \ge e^{-3k(\|\bu\|+1)\sqrt{N}}
     \bbE Z_{2k}\big(A_{2k}(N^{-1/2+c})\big) .
    \end{align}
Combining this with Eq.~\eqref{eq:quadratic-tail} yields the claim.

\end{proof}

\begin{lem}\label{lemma:A-T-comparison}
For any $m\ge 2$, there exists a constant $c>0$ such that, for $T=T(\delta)$,
\begin{align}
Z_m\big(A_m(T,\delta)\big) \le Z(T)^m
\le (1+e^{-cN})\cdot Z_m\big(A_m(T,\delta)\big)
+e^{-cN+Nm\xi(1)/2}\, .
\end{align}
\end{lem}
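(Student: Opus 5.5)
The left inequality is immediate: $A_m(T,\delta)\subseteq T^m$ and the integrand $e^{\sum_i H(\bsig^i)}$ is nonnegative, so $Z_m(A_m(T,\delta))\le Z_m(T^m)=Z(T)^m$. All the work is in the right inequality.

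For the right inequality, write
\[
Z(T)^m = Z_m\big(A_m(T,\delta)\big) + Z_m\big(T^m\setminus A_m(\delta)\big),
\]
and bound the second term by a union over pairs $i\ne j$:
\[
Z_m\big(T^m\setminus A_m(\delta)\big)\le \binom m2\int_{\bsig^1\in T}\int_{\bsig^2:|\<\bsig^1,\bsig^2\>_N|>\delta} e^{H(\bsig^1)+H(\bsig^2)}\,\mu_0^{\otimes2}(\de\bsig)\cdot Z(T)^{m-2}.
\]
Here the remaining replicas are simply integrated over $T$.

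The key step is to use the definition \eqref{eq:TypicalDef} of $T(\delta)$. For each fixed $\bsig^1\in T$, the inner integral over $\bsig^2$ is at most
\[
e^{-c_1(\delta)N}\min\big(Z,\,e^{N\xi(1)/2}\big).
\]
This bound holds deterministically, since $T$ is defined pathwise. Hence
\[
Z_m\big(T^m\setminus A_m(\delta)\big)\le \binom m2 e^{-c_1N}\min\big(Z,e^{N\xi(1)/2}\big)\,Z(T)^{m-1}.
\]

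Two cases finish the argument. If $Z(T)\ge Z/2$, use $\min(\cdot)\le Z\le 2Z(T)$. Then the error is at most $m^2e^{-c_1N}Z(T)^m$, and rearranging gives
\[
Z(T)^m\le (1-m^2e^{-c_1N})^{-1}Z_m\big(A_m(T,\delta)\big)\le(1+e^{-cN})Z_m\big(A_m(T,\delta)\big).
\]
If instead $Z(T)<Z/2$, I would need a crude bound that does not use $Z$. The natural choice is to use $\min(\cdot)\le e^{N\xi(1)/2}$ together with some a priori bound $Z(T)\lesssim e^{N\xi(1)/2+o(N)}$. This gives an error of at most $e^{-cN+Nm\xi(1)/2}$, which is exactly the additive term in the statement.

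I expect this a priori bound to be the main obstacle, since the lemma is stated without a probability qualifier. One option is to absorb it into the case split. Set $a=Z(T)$ and $b=e^{N\xi(1)/2}$, so that the error is $\le m^2e^{-c_1N}\min(Z,b)\,a^{m-1}$. If $a\le b$, this is $\le m^2e^{-c_1N}b^m$, which is of the additive form. If $a>b$, then $\min(Z,b)\le b<a$, so the error is $\le m^2e^{-c_1N}a^m$, and the multiplicative rearrangement applies. This reformulation removes the need for the case $Z(T)<Z/2$ and any probabilistic input. The result then holds deterministically, for any $c<c_1(\delta)$ after absorbing $m^2$ for $N$ large.
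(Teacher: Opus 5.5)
Your proof is correct and follows essentially the paper's route. Both arguments take a union bound over pairs and use the pathwise definition of $T(\delta)$ to bound the inner integral by $e^{-c_1N}e^{N\xi(1)/2}$. Both then split $e^{N\xi(1)/2}Z(T)^{m-1}$ into a multiplicative piece and an additive piece. The paper does this last step with AM--GM, i.e.\ Young's inequality $a^{m-1}b\le a^m+b^m$, and your case split $a\le b$ versus $a>b$ is the same inequality written out by cases. The detour through $Z(T)\ge Z/2$ is unnecessary, as you noticed yourself.
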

\begin{proof}
The left-hand inequality is obvious since $A_m(T,\delta)\subseteq T^{\otimes m}$. For the right-hand inequality, consider
first the case $m=2$. Then we have
\begin{align}
Z(T)^2
&\le Z_2\big(A_2(T,\delta)\big)
+\int_{T\times T}\bfone_{|\<\bsig^1,\bsig^2\>_N|\ge\delta}
e^{H(\bsig^1)+H(\bsig^2)}\mu_0^{\otimes2}(\de\bsig)\\
&\le Z_2\big(A_2(T,\delta)\big)
+\int_T e^{H(\bsig^1)}
\left[\int_T\bfone_{|\<\bsig^1,\bsig^2\>_N|\ge\delta}
e^{H(\bsig^2)}\mu_0(\de\bsig^2)\right]\mu_0(\de\bsig^1)\\
&\le Z_2\big(A_2(T,\delta)\big)
+\int_T e^{H(\bsig^1)}e^{-c_1(\delta)N+N\xi(1)/2}
\mu_0(\de\bsig^1)\\
&\le Z_2\big(A_2(T,\delta)\big)
+e^{-Nc_1(\delta)+N\xi(1)/2}Z(T)
\label{eq:Z2-beforeLast}\\
&\le Z_2\big(A_2(T,\delta)\big)
+e^{-Nc_1(\delta)+N\xi(1)}
+e^{-Nc_1(\delta)}Z(T)^2\, .
\end{align}
Here, in the last step we used the AM-GM inequality.
Solving this inequality for $Z(T)^2$, we get:
\begin{align}
Z(T)^2\le (1+e^{-cN})Z_2\big(A_2(T,\delta)\big)
+2e^{-c_1(\delta)N+N\xi(1)}\, .
\label{eq:ZTT}
\end{align}
This proves the claim for $m=2$.

Consider now $m\ge2$. Note that
\begin{align*}
&\int_{T(\delta)^m}e^{\sum_{i=1}^mH(\bsig^i)}
\mu_0^{\otimes m}(\de\bsig)
-\int_{A_m(T(\delta),\delta)}e^{\sum_{i=1}^mH(\bsig^i)}
\mu_0^{\otimes m}(\de\bsig)\\
&\le\sum_{i\ne j}
\left(\int_{T(\delta)}e^{H(\bsig)}\mu_0(\de\bsig)\right)^{m-2}
\int_{\bsig^i,\bsig^j\in T(\delta):\,|\<\bsig^i,\bsig^j\>_N|>\delta}
e^{H(\bsig^i)+H(\bsig^j)}
\mu_0(\de\bsig^i)\mu_0(\de\bsig^j).
\end{align*}
Whence
\begin{align*}
Z(T)^m-Z_m\big(A_m(T,\delta)\big)
&\le m^2Z_2\big(T^{\otimes2}\setminus A_2(T,\delta)\big)
\cdot Z(T)^{m-2}\\
&\le m^2\cdot Z(T)^{m-1}\cdot
e^{-Nc_1(\delta)+N\xi(1)/2}\, ,
\end{align*}
where in the last inequality we used Eq.~\eqref{eq:Z2-beforeLast}.
Using again the AM-GM inequality, we get
\begin{align*}
Z(T)^m-Z_m\big(A_m(T,\delta)\big)
\le m^2e^{-Nc_1(\delta)}Z(T)^m
+m^2e^{-Nc_1(\delta)+Nm\xi(1)/2}\, ,
\end{align*}
which yields the claim.
\end{proof}

\section{Proof of Lemma \ref{lem:high-acc-anneal}}
We next prove Lemma \ref{lem:high-acc-anneal}. For the partition-function estimates, fix an integer $k\ge1$ and
$0<L<k$. Choose the overlap-window exponent $c>0$ so small that
$3ck<(k-L)/2$, in addition to the other smallness requirements below.
We expand through degree $k-1$; the Taylor remainder is then
$O(N^{-k/2+3ck})=O(N^{-L/2})$. The magnetization estimate
\eqref{eq:Magn-No-Field} holds for every integer $k\ge1$; when using the
partition-function estimates with this $k$, we may choose any $L\in(0,k)$.
In the proof, we let $c$ denote small absolute constants that can change from line to line. We will first prove the partition
function estimate, Eq.~\eqref{eq:HighAcc-PartitionFun},
and then the magnetization estimate,
Eq.~\eqref{eq:Magn-No-Field}.

\noindent{\bf Estimating the partition function, Eq.~\eqref{eq:HighAcc-PartitionFun}.}  By Eq.~\eqref{eq:quadratic-tail} in Lemma~\ref{lem:OrthogonalTuples}, with probability $1-e^{-cN}$
over $\bW^{(2)}$,
\begin{align}
&\int_{\max_{i\ne j} |\langle \bsig^i,\bsig^j\rangle_N| \in [N^{-1/2+c}, \delta]}\exp\lt(\sum_{i=1}^{2k}H_{\le2}(\bsig^{i})+N\sum_{i< j \le 2k}\xi_{\ge3}(\langle\bsig^{i},\bsig^{j}\rangle_N)\rt)\mu_0^{\otimes 2k}(\de\bsig)\nonumber\\
&\le \exp(-N^{c})\int_{S_N^{2k}}\exp\lt(\sum_{i=1}^{2k}H_{\le2}(\bsig^{i})\rt)
\mu^{\otimes 2k}_0(\de\bsig).
\end{align}
On $A_{2k}(N^{-1/2+c}) = \{|\langle\bsig^{i},\bsig^{j}\rangle_N|\le N^{-1/2+c}\;\; \forall
i\ne j\}$, we can expand
\begin{align*}
\exp\left(N\sum_{i < j}\xi_{\ge3}(\langle\bsig^{i},\bsig^{j}\rangle_N)\right)
=\sum_{\ell=0}^{k-1}\frac{1}{\ell!}\Big(N\sum_{i<j}\xi_{\ge3}(\langle\bsig^{i},\bsig^{j}\rangle_N)
\Big)^{\ell}+
O(N^{-k/2+3ck}).
\end{align*}
Thus, for $T=T(\delta)$, the following holds with probability at least $1-e^{-cN}$ over $\bW_2$,
\begin{align}
& \bbE_{\ge3}\Big\{\big(Z(T)-\bbE_{\ge3}Z(T)\big)^{2k}\Big\}\\
&\stackrel{(a)}{\le} \sum_{r\le2k}\binom{2k}{2k-r}(-1)^{r}\big(\bbE_{\ge3}Z(T)\big)^{2k-r}\cdot\bbE_{\ge3}Z\big(A_r(T,\delta)\big)\nonumber\\
&\phantom{AAA}
+e^{-Nc}\sum_{r\le2k}\binom{2k}{2k-r}\big(\bbE_{\ge3}Z(T)\big)^{2k-r}\cdot\Big(\bbE_{\ge3}Z\big(A_r(T,\delta)\big) + e^{Nr\xi(1)/2}\Big) \nonumber\\
&\stackrel{(b)}{\le} \sum_{r\le2k}\binom{2k}{2k-r}(-1)^{r}\big(\bbE_{\ge3}Z(T)\big)^{2k-r}\cdot\bbE_{\ge3}Z\big(A_r(T,\delta)\big)\\
&\phantom{AAA}
+e^{-Nc}\max_{r\le2k}e^{N(2k-r)\xi(1)/2}\cdot\Big(\bbE_{\ge3}Z\big(A_r(T,\delta)\big) + e^{Nr\xi(1)/2}\Big)
\, ,\label{eq:Zdev2}
\end{align}
where $(a)$ follows from Lemma \ref{lem:OrthogonalTuples} and $(b)$ holds because $\bbE_{\ge3}Z(T)
\le e^{c'N}\bbE Z$ with the claimed probability  by Markov's inequality.

We define the error terms
\begin{align}
    \Err_1 & := e^{-cN+Nk\xi(1)} +
    e^{-cN}\max_{1\le r\le 2k}\Big(\bbE_{\ge3} Z(A_{r}(T,\delta))\Big)^{2k/r}+
    \bbE_{\ge3}Z(A_{2k}(\delta)\cap\{\bsig^1 \in T^c\}\big),\\
       \Err_2 & := N^{-L/2} e^{Nk\xi_{\ge3}(1)}Z_{\le 2, 2k}(A_{2k}(\delta))\, ,
\end{align}
so that the bound \eqref{eq:Zdev2} implies
\begin{align}
& \bbE_{\ge3}\Big\{\big(Z(T)-\bbE_{\ge3}Z(T)\big)^{2k}\Big\}\le \sum_{r\le2k}\binom{2k}{2k-r}(-1)^{r}\big(\bbE_{\ge3}Z(T)\big)^{2k-r}\cdot\bbE_{\ge3}Z\big(A_r(T,\delta)\big)+O_k(\Err_1)\, .
\label{eq:Zdev3}
\end{align}
Next note that
\begin{align*}
&\big(\bbE_{\ge3}Z(T)\big)^{2k-r}\cdot\bbE_{\ge3}Z\big(A_{r}(T,\delta)\big)
\\
&=e^{Nk\xi_{\ge 3}(1)}\lt(\int_{T(\delta)} e^{H_{\le2}(\bsig)}\mu_0(\de\bsig)\rt)^{2k-r}\cdot\\
 & \qquad\qquad
 \cdot\int_{A_{r}(\delta)}
 e^{\sum_{i=1}^{r}H_{\le2}(\bsig^{i})}\exp\lt(N\sum_{i< j}\xi_{\ge3}(\langle\bsig^{i},\bsig^{j}\rangle_N)\rt)\mu^{\otimes r}_0(\de\bsig)  + O_k(\Err_1) \\
 &= e^{Nk\xi_{\ge 3}(1)}\int_{A_{2k}(\delta)}  \exp\left(\sum_{i'=1}^{2k-r} H_{\le 2}((\bsig')^{i'}) + \sum_{i=1}^{r}H_{\le 2}(\bsig^i)\right) \cdot \\
 &\qquad \qquad \cdot \left\{\sum_{\ell=0}^{k-1} \frac{1}{\ell!} \left(N\sum_{i< j\le r}\xi_{\ge3}(\langle \bsig^i,\bsig^j\rangle_N)\right)^{\ell}\right\} \mu^{\otimes r}_0(\de\bsig)\mu^{\otimes (2k-r)}_0(\de\bsig') + O_k(\Err_1+\Err_2),
\end{align*}
where the last inequality holds with probability $1-e^{-cN}$ over $\bW^{(2)}$
 by Eq.~\eqref{eq:quadratic-tail}.

Substituting in Eq.~\eqref{eq:Zdev3}, we get
\begin{align}
&\bbE_{\ge3}\Big\{\big(Z(T)-\bbE_{\ge3}Z(T)\big)^{2k}\Big\}\nonumber\\
 &\le e^{Nk\xi_{\ge3}(1)}\int_{A_{2k}(\delta)}e^{\sum_{i=1}^{2k}H_{\le2}(\bsig^{i})}
 \sum_{\ell\le k-1}\frac{1}{\ell!}\sum_{r\le2k}(-1)^{r}\sum_{S\subseteq [2k]:|S|=r}
 \Big(N\sum_{i< j\in S}\xi_{\ge3}(\langle\bsig^{i},\bsig^{j}\rangle_N)\Big)^{\ell}\mu_0^{\otimes 2k}(\de\bsig)   \nonumber\\
 & \qquad \qquad \qquad + O_k(\Err_1+\Err_2). \label{eq:Taylor}
\end{align}

We can expand  the $\ell$-th power in \eqref{eq:Taylor}, thus getting a sum indexed
by sets of pairs  $S= \{(i_{t},j_{t}):t\le\ell\}\subseteq \binom{[2k]}{2}$. Denoting by
$n(S)$ the number of distinct elements of $[2k]$ appearing in $S$, the coefficient
of such a term is
\[
\sum_{n(S)\le r\le2k}(-1)^{r}\binom{2k-n(S)}{r-n(S)}=0
\]
for $|\{i_{t},j_{t}:t\le\ell\}|<2k$. Hence, we have
\beq
\label{eq:E-ge3-variance-bound}
\bbE_{\ge3}\Big\{\big(Z(T)-\bbE_{\ge3}Z(T)\big)^{2k}\Big\}=O_k(\Err_1+\Err_2)\, .
\eeq

We now estimate the error terms.

\noindent{\em Error term $\Err_2$.}
Using Lemma \ref{lem:truncate}, we have
\begin{align*}
    \big(\bbE_{\ge3}Z(T)\big)^{2k} &= \Big(\bbE_{\ge3}Z - \bbE_{\ge3}Z(T^c)\Big)^{2k} \nonumber\\
    &\ge (1-e^{-c N/8}) \big(\bbE_{\ge3}Z\big)^{2k}  \\
    &\ge c\, e^{Nk\xi_{\ge3}(1)}\big(Z_{\le 2}\big)^{2k}\\
    & \ge c\, e^{Nk\xi_{\ge3}(1)} Z_{\le 2}\big(A_{2k}(\delta)\big).
\end{align*}
From this estimate, we obtain with probability at least $1-\exp(-cN/8)$ over $\bW^{(2)}$ that
\begin{align}
    \Err_2 &\le C\cdot N^{-L/2}\cdot \big(\bbE_{\ge 3 }Z(T)\big)^{2k} .\label{eq:Err2Final}
\end{align}

\noindent{\em Error term $\Err_1$.} Using Lemma \ref{lem:key-error}
and Markov's inequality, with probability $1-\exp(-c N/2)$ over $\bW^{(2)}$,
\begin{align}
    &\bbE_{\ge3}  Z\big(A_{2k}(\delta)\cap\{\bsig^1 \in T^c\}\big) \le e^{-cN/2} \bbE
    Z(A_{2k}(\delta)).
\end{align}
Further using  Eq.~\eqref{eq:quad-error-2} in Lemma \ref{lem:OrthogonalTuples}, and using the assumption on
$\|\bu\|_2$,
 with probability $1-\exp(-c N/4)$ over $\bW^{(2)}$,
we have
\begin{align}
    &\bbE_{\ge3}  Z\big(A_{2k}(\delta)\cap\{\bsig^1 \in T^c\}\big) \le e^{-cN/2} \bbE_{\ge3}
    Z(A_{2k}(\delta)). \label{eq:key-error-2-annulus}
\end{align}
Hence, with probability at least $1-\exp(-c N/8)$ over $\bW^{(2)}$,
\begin{align}\label{eq:Err1-almost}
  \Err_1 & \le e^{-cN+Nk\xi(1)} +
    e^{-cN}\max_{1\le r\le 2k}\Big(\bbE_{\ge3} Z(A_{r}(\delta))\Big)^{2k/r}
\end{align}

Further, with probability at least $1-\exp(-cN/8)$ over $\bW^{(2)}$,
\begin{align}
    \bbE_{\ge3} Z\big(A_{r}(\delta)\big)  &= \bbE_{\ge3} Z\big(A_{r}(N^{-1/2+c})\big)+
    \bbE_{\ge3} Z\big(A_r(\delta)\setminus A_{r}(N^{-1/2+c})\big)\nonumber\\
    & \le 2\, e^{Nr\xi_{\ge 3}(1)/2}  Z_{\le 2}\big(A_{r}(N^{-1/2+c})\big)+ e^{-N^c+Nr\xi_{\ge 3}(1)/2} (Z_{\le 2})^{r}
    \, ,\label{eq:secErrorTerm}
\end{align}
where in the last line we used Eq.~\eqref{eq:quadratic-tail}, and the fact that
\begin{align*}
\bbE_{\ge3} Z\big(A_{r}(N^{-1/2+c})\big)
&= \int_{A_{r}(N^{-1/2+c})} e^{\sum_{i=1}^{r}H_{\le 2}(\bsig^i)}
    \exp\left(\frac{N}{2}\sum_{i,j\le r}\xi_{\ge 3}(\<\bsig^i,\bsig^j\>_N )\right)\mu_0^{\otimes r}(\de \bsig)\\
& \le \big(1+O(N^{-1/2+3c})\big)\, e^{Nk\xi_{\ge 3}(1)}  Z_{\le 2}\big(A_{r}(N^{-1/2+c})\big)   \, .
\end{align*}
Using Eq.~\eqref{eq:laplace-1} in Eq.~\eqref{eq:secErrorTerm}, we get
\begin{align}
  \bbE_{\ge3} Z\big(A_{r}(\delta)\big) \le N^C e^{Nr\xi(1)/2}\, ,
\end{align}
whence Eq.~\eqref{eq:Err1-almost} simplifies to
\begin{align}
\Err_1 &\le e^{-cN+Nr\xi(1)}\, .\label{eq:Err1-almost2}
\end{align}
On the other hand, by Lemma \ref{lem:truncate} and Markov's inequality, with probability $1-\exp(-c N/4)$ over $\bW^{(2)}$,
\begin{align*}
    \bbE_{\ge3}\int_{T(\delta)^c} e^{H(\bsig)} \mu_0(\de\bsig) &\le e^{-cN/4} e^{N\xi(1)/2}.
\end{align*}
Using Lemma \ref{lem:laplace}, we obtain that, with probability at least $1-\exp(-cN/8)$ over $\bW^{(2)}$,
\begin{align*}
    \bbE_{\ge3} Z &= \int e^{N\xi_{\ge3}(1)/2} e^{H_{\le2}(\bsig)}\mu_0(\de\bsig) \\
    &\ge e^{N\xi(1)/2-cN/10}\, ,
\end{align*}
whence Eq. \eqref{eq:Err1-almost2} yields
\begin{align}
\Err_1 &\le e^{-cN/16}\big(\bbE_{\ge3} Z(T)\big)^{2k}.\label{eq:Err1Final}
\end{align}

We also note here the estimate
\begin{align}
    \bbE_{\ge3} Z(T) = \bbE_{\ge3} Z - \bbE_{\ge3} Z(T^c) \ge (1-e^{-cN/10})\bbE_{\ge3} Z,
\end{align}
which holds with probability at least $1-\exp(-cN/8)$ over $\bW^{(2)}$, as claimed in Eq.~(\ref{eq:Z3T}).

Combining the error estimates \eqref{eq:Err1Final} and \eqref{eq:Err2Final}
in the moment bound \eqref{eq:E-ge3-variance-bound},
we get, with probability at least
$1-\exp(-Nc)$ with respect to $\bW_2$,
\begin{align}
\bbE_{\ge3}\Big\{\big(Z(T)-\bbE_{\ge3}Z(T)\big)^{2k}\Big\}
\le C\, N^{-L/2}\, \big(\bbE_{\ge3}Z(T)\big)^{2k}\, .
\label{eq:ZT_MomentBound-}
\end{align}
Adjusting $c$, we have
\[
\bbP\left(\left|Z-\bbE_{\ge3}Z(T(\delta))\right|>\eps\, \bbE_{\ge3}Z(T(\delta))\right)\le
\eps^{-2L}N^{-L/2}+ e^{-cN}.
\]

\noindent{\bf Estimating the magnetization, Eq.~\eqref{eq:Magn-No-Field}.}
We next apply the same argument to the magnetization.
First, we note that
\begin{align}
  \bbE_{\ge3}\left\{\left(\int_{T(\delta)}\sigma_{1}e^{H(\bsig)}\mu_0(\de\bsig)\right)^{2k}\right\}
 & =\bbE_{\ge3}\int_{T(\delta)^{2k}}\prod_{i=1}^{2k}\sigma_{1}^{i}\, e^{\sum_{i=1}^{2k}H(\bsig^{i})} \, \mu_0^{\otimes 2k}(\de\bsig)\\
 & =\bbE_{\ge3}\int_{A_{2k}(\delta)}\prod_{i=1}^{2k}\sigma_{1}^{i}\,
 e^{\sum_{i=1}^{2k}H(\bsig^{i})}\mu^{\otimes 2k}(\de\bsig) + \Err_{3}\, ,\label{eq:MagnFirstDecomp}
 \end{align}
where
\begin{equation}\label{eq:Err3}
    \Err_{3} := \bbE_{\ge3}\int_{T(\delta)^{2k}}\prod_{i=1}^{2k}\sigma_{1}^{i}\,
 e^{\sum_{i=1}^{2k}H(\bsig^{i})}\mu^{\otimes 2k}(\de\bsig)
 - \bbE_{\ge3}\int_{A_{2k}(\delta)}\prod_{i=1}^{2k}\sigma_{1}^{i}\,
 e^{\sum_{i=1}^{2k}H(\bsig^{i})}\mu^{\otimes 2k}(\de\bsig) .
\end{equation}
We have
\begin{align}
&\lt|\int_{T(\delta)^{2k}} \prod_{i=1}^{2k}\sigma_1^ie^{\sum_{i=1}^{2k}H(\bsig^{i})}\mu_0^{\otimes 2k}(\de\bsig) - \int_{A_{2k}(T(\delta),\delta)}\prod_{i=1}^{2k}\sigma_1^ie^{\sum_{i=1}^{2k}H(\bsig^{i})}\mu_0^{\otimes 2k}(\de\bsig)\rt| \nonumber\\
 &= \lt|\int_{T(\delta)^{2k}: \max_{i\ne j}|\<\bsig^i,\bsig^j\>_N|>\delta} \prod_{i=1}^{2k}\sigma_1^ie^{\sum_{i=1}^{2k}H(\bsig^{i})}\mu_0^{\otimes 2k}(\de\bsig)\rt| \nonumber\\
 & \stackrel{(a)}{\le} N^k (2k)^2 e^{-c N + N\xi(1)/2}
 Z\big(T(\delta)\big)^{2k-1} \nonumber \\
 & \stackrel{(b)}{\le}  e^{-cN/(2k)+Nk\xi(1)} + e^{-cN/(2k)}
 Z\big(T(\delta)\big)^{2k}\nonumber \\
 & \stackrel{(c)}{\le}  e^{-cN/(2k)+Nk\xi(1)} + e^{-cN/(2k)}
 \lt(\bbE_{\ge 3} Z\big(T(\delta)\big)\rt)^{2k}\, ,\label{eq:MM-up-magnetization}
\end{align}

Here in $(a)$ we used the definition of $T(\delta)$ and in
$(b)$ the AM-GM inequality.
aking $\bbE_{\ge3}$ in \eqref{eq:MM-up-magnetization},
the centered moment bound \eqref{eq:ZT_MomentBound} gives
$\bbE_{\ge3}Z(T)^{2k}\le C_k(\bbE_{\ge3}Z(T))^{2k}$.
The contribution from $A_{2k}(\delta)\setminus A_{2k}(T,\delta)$
is controlled by Lemma~\ref{lem:key-error} and Markov's inequality,
as in \eqref{eq:key-error-2-annulus}. Using Lemma~\ref{lem:laplace}
to absorb the deterministic exponential term yields
\eqref{eq:Err3-bound}, with probability $1-e^{-cN}$ over the
quadratic disorder:

 \begin{align}
 |\Err_{3}| \le e^{-cN}\lt(\bbE_{\ge3}Z\big(T(\delta)\big)\rt)^{2k}. \label{eq:Err3-bound}
\end{align}

Turning to the main term in Eq.~\eqref{eq:MagnFirstDecomp},
\begin{align*}
    &\bbE_{\ge3}\int_{A_{2k}(\delta)}\prod_{i=1}^{2k}\sigma_{1}^{i}\,
 e^{\sum_{i=1}^{2k}H(\bsig^{i})}\mu^{\otimes 2k}(\de\bsig)\\
    &=e^{Nk\xi_{\ge3}(1)}\int_{A_{2k}(\delta)} \prod_{i=1}^{2k} \sigma_1^i \exp\left\{\sum_{i=1}^{2k}H_{\le2}(\bsig^{i})+\frac{N}{2}\sum_{i\ne j}\xi_{\ge3}(\<\bsig^{i},\bsig^{j}\>_N)
\right\} \mu_0^{\otimes 2k}(\de \bsig)\, .
\end{align*}
By Eqs.~\eqref{eq:quad-error-1} and \eqref{eq:quadratic-tail}
in Lemma \ref{lem:OrthogonalTuples}, we can bound
\begin{align*}
    &\bbE_{\ge3}\int_{A_{2k}(\delta)}\prod_{i=1}^{2k}\sigma_{1}^{i}\,
 e^{\sum_{i=1}^{2k}H(\bsig^{i})}\mu^{\otimes 2k}(\de\bsig)\\
    &=e^{Nk\xi_{\ge3}(1)}\int_{A_{2k}(N^{-1/2+c})} \prod_{i=1}^{2k} \sigma_1^i \exp\left\{\sum_{i=1}^{2k}H_{\le2}(\bsig^{i}) + \frac{N}{2}\sum_{i\ne j}\xi_{\ge3}(\< \bsig^i,\bsig^j\>_N)
\right\} \mu_0^{\otimes 2k}(\de \bsig) \\
&\qquad \qquad + O\lt( N^{k} e^{-N^c+Nk\xi_{\ge3}(1)}
Z_{\le 2,2k}\big(A_{2k}(\delta)\big)\rt).
\end{align*}
To bound the first term, we use Lemma \ref{lem:overlap-2} to obtain
\begin{align*}
    &\int_{A_{2k}(\delta)} \prod_{i=1}^{2k} \sigma_1^i \exp\left\{\sum_{i=1}^{2k}H_{\le2}(\bsig^{i})
\right\} \mu_0^{\otimes 2k}(\de \bsig) \\
&= \int_{S_N^{2k}} \prod_{i=1}^{2k} \sigma_1^i \exp\left\{\sum_{i=1}^{2k}H_{\le2}(\bsig^{i})
\right\} \mu_0^{\otimes 2k}(\de \bsig) + O_k\lt(N^k
e^{-c\delta^2N}\big(Z_{\le 2}\big)^{2k}\rt)\\
&= \lt(\int_{S_N} \sigma_1 \exp\left\{H_{\le2}(\bsig)
\right\} \mu_0^{}(\de \bsig)\rt)^{2k} + O_k\lt(N^k
e^{-c\delta^2N}\big(Z_{\le 2})^{2k}\rt).
\end{align*}
By Lemma \ref{lem:laplace}, we then obtain
\begin{align}
    &\int_{A_{2k}(\delta)} \prod_{i=1}^{2k} \sigma_1^i \exp\left\{\sum_{i=1}^{2k}H_{\le2}(\bsig^{i})
\right\} \mu_0^{\otimes 2k}(\de \bsig)\le C_k \lt({|u_1|}^{2k}+N^k e^{-c\delta^2N}\rt)\lt(Z_{\le 2}\rt)^{2k}. \label{eq:A2k-magnetization}
\end{align}

On the other hand, by taking the Taylor expansion of $\exp\lt\{\frac{N}{2}\sum_{i\ne j}\xi_{\ge3}(\<\bsig^i,\bsig^j\>_N)\rt\}$ up to terms of order $L=Ck$ for $C>2$, we obtain that, for $\xi_{\ge3, \le \ell}(s) = \sum_{3\le p\le \ell} \beta_p^2 s^p$,
\begin{align}
    &\frac{1}{(Z_{\le 2})^{2k}}\int_{A_{2k}(N^{-1/2+c})} \prod_{i=1}^{2k} \sigma_1^i \lt(\exp\lt(N\sum_{i< j}\xi_{\ge3}(\<\bsig^i,\bsig^j\>_N)\rt)-1\rt)
    e^{\sum_{i=1}^{2k}H_{\le2}(\bsig^{i})} \mu_0^{\otimes 2k}(\de \bsig)\nonumber \\
&= O(N^{-k}) + \frac{1}{(Z_{\le 2})^{2k}}\sum_{\ell\le L} \frac{N^{\ell}}{\ell!}\int_{A_{2k}(N^{-1/2+c})} \prod_{i=1}^{2k} \sigma_1^i \lt(\sum_{i< j}\xi_{\ge3}(\<\bsig^i,\bsig^j\>_N)\rt)^{\ell}
e^{\sum_{i=1}^{2k}H_{\le2}(\bsig^{i})}
\mu_0^{\otimes 2k}(\de \bsig)\nonumber\\
&= O(N^{-k}) + \frac{1}{(Z_{\le 2})^{2k}}\sum_{\ell\le L} \frac{N^{\ell}}{\ell!}\int_{A_{2k}(N^{-1/2+c})} \prod_{i=1}^{2k} \sigma_1^i \lt(\sum_{i< j}\xi_{\ge3,\le 4k}(\<\bsig^i,\bsig^j\>_N)\rt)^{\ell}
e^{\sum_{i=1}^{2k}H_{\le2}(\bsig^{i})}
\mu_0^{\otimes 2k}(\de \bsig)\nonumber\\
&\stackrel{(a)}{=} O(N^{-k}+e^{-N^c}) + \frac{1}{(Z_{\le 2})^{2k}}\sum_{\ell\le L} \frac{N^{\ell}}{\ell!}\int_{S_N^{2k}} \prod_{i=1}^{2k} \sigma_1^i \lt(\sum_{i< j}\xi_{\ge3,\le 2k}(\<\bsig^i,\bsig^j\>_N)\rt)^{\ell}e^{\sum_{i=1}^{2k}H_{\le2}(\bsig^{i})} \mu_0^{\otimes 2k}(\de \bsig)\nonumber\\
&\stackrel{(b)}{=} O(N^{-k}+e^{-N^c}) + (1+\|\bu\|)^{O(k^2)} O_{k}\lt(\sum_{\ell \le k}|u_1|^{2k-2\ell}N^{-\ell} + \sum_{k<\ell \le L} N^{-\ell/2-k/2}\rt) \\
&= (1+\|\bu\|)^{O(k^2)}O(N^{-k}+e^{-N^c}+|u_1|^{2k})\, ,
\label{eq:LastEst-Magn}
\end{align}
where in $(a)$ we used again
Lemma \ref{lem:OrthogonalTuples} and in $(b)$
Lemma \ref{lem:laplace-moments}.

We thus have from Eqs.~\eqref{eq:Err3-bound},
\eqref{eq:A2k-magnetization} and \eqref{eq:LastEst-Magn},
\begin{align*}
&\bbE_{\ge3}\left\{\Big(\int_{T(\delta)}\sigma_{1} e^{H(\bsig)}\mu_0(\de\bsig)\Big)^{2k}\right\} \\
&\le C_k(1+\|\bu\|)^{Ck^2}\lt({|u_1|}^{2k}+N^{-k}+N^ke^{-N^c}+e^{-cN}\rt)\lt(\bbE_{\ge3}
Z\big(T(\delta)\big)\rt)^{2k}.
\end{align*}
The desired claim \eqref{eq:Magn-No-Field} follows from Markov's inequality upon adjusting the constant $c$.

\section{Magnetization in a weak field: Proof of Theorem
\ref{lem:correction-band}}

In the remainder of this section, we denote by $\mu$ the Gibbs measure associated to $H(\bsig)$, i.e.
$$\mu(\de\bsig) \propto \exp(H(\bsig))\, \mu_0(\de\bsig).$$

In the following we estimate the components of
$\<\bsig\> = (\<\sigma_1\>,\dots,\<\sigma_N\>)$
in the basis of eigenvectors of the quadratic part of the Hamiltonian $\bW^{(2)}$.
For simplicity of notation, we consider the
component $\<\sigma_1\>$, but we emphasize that
this does not necessarily correspond to the largest
(or smallest) eigenvalue of $\bW^{(2)}$.
Defining $\bsig_{-1}=(\sigma_{2},\dots,\sigma_{N})$, we have
\begin{align}
\int\sigma_{1} e^{H(\bsig)}\mu_0(\de\bsig)
= \frac{1}{Z} \int\sigma_{1}e^{\sigma_{1}u_{1}+\Lambda_{1}\sigma_{1}^{2}}\hat{E}(\sigma_{1})
\de\sigma_1\, .
\end{align}
Here, we define
\[
\hat{E}(\sigma_{1})= C_N
(1-\sigma_{1}^2/N)^{(N-3)/2}\int \exp\Big(\frac{\sigma_1}{N}\sum_{i,j=2}^N\tg_{1ij}\sigma_{i}\sigma_{j}\Big)
e^{H_{\sigma_1}(\bsig_{-1})}
\mu_{0,\sqrt{N-\sigma_{1}^2}}(\de\bsig_{-1})\, .
\]
The measure $\mu_{0,\rho}$ is uniform over the sphere of radius $\rho$. We also define
\[
C_N:=\frac{\Gamma(N-1)}{\Gamma((N-1)/2)^22^{N-2}\sqrt{N}} =
\frac{1}{\sqrt{2\pi}}+O(N^{-1})\, .
\]
Finally, we define
\[
H_{\sigma_1}(\bsig_{-1}) := \sum_{i=2}^{N} (\sigma_iu_i + \Lambda_i\sigma_i^2) + N^{-1}\sum_{i,j,k>1} g^{(3)}_{ijk}\sigma_i\sigma_j\sigma_k + \sum_{p\ge4} H_p(\bsig) + R_1(\bsig).\]

Here
\[
\tg_{1ij}=3N W^{(3)}_{1ij}\quad(i,j>1),\qquad
R_1(\bsig)=\frac{\sigma_1^2}{N}\sum_{i>1}d_i\sigma_i+\frac{g_{111}}{N}\sigma_1^3,
\qquad d_i=3N W^{(3)}_{11i}.
\]
The variables $(d_i)_{i>1}$ are independent $\cN(0,3\beta_3^2)$, and are independent of $(\tg_{1ij})_{i,j>1}$ and $g_{111}$.

In particular $\tg_{1ij} = \tg_{1ji}$ and
\begin{align}
(\tg_{1ij})_{1<i<j} \sim_{iid}\cN(0,3\beta_3^2/2)\,,\;\;\;\;
(\tg_{1ii})_{1<i} \sim_{iid}\cN(0,3\beta_3^2)\, .
\end{align}

We set $\hat{E}(\sigma_{1})=0$ for $|\sigma_1|>\sqrt{N}$.

By Lemma \ref{lem:laplace}  and Lemma \ref{lem:high-acc-anneal}, choosing $L$ sufficiently large for the prescribed $C$, then an
integer $k>L$, and then $c_0$ sufficiently small, with probability $1-e^{-cN} - N^{-C}$,
\begin{align*}
Z &=(1+O(N^{-c}))\sqrt{\frac{2}{G''(\gamma_*)}}
\cdot\exp\Big\{N\Big[\frac{1}{2}\xi_{\ge3}(1) -\frac{1}{2}\log (2e)+G(\gamma_*)\Big]\Big\}\, ,
\end{align*}
where $G(\gamma)$ and $\gamma_{*}$ were defined in Eqs.~\eqref{eq:GDef} and \eqref{eq:GammaDef}.

In estimating $\<\sigma_{1}\>$, we first anneal over $\bg_{\ge4}$
and $\bg_{3-}:=(g_{ijk}: i,j,k>1)$. We have
\begin{align*}
E(\sigma_1):=\bbE_{\bg_{3-},\bg_{\ge4}}[\hat{E}(\sigma_{1})] &= C_N\Big(1-\frac{\sigma_{1}^2}{N}\Big)^{(N-3)/2}
\int\exp\left(\frac{\sigma_{1}}{N}\sum_{i,j=2}^N\tg_{1ij}\sigma_{i}\sigma_{j}\right)\\
&\qquad \qquad
\exp\Big\{H_{\le2}(\bsig_{-1})+N\xi_{\ge4}(1)/2+N\beta_{3}^{2}(1-\sigma_{1}^{2}/N)^{3}/2+R_1(\bsig)\Big\}
\mu_{0,\sqrt{N-\sigma_{1}^2}}(\de\bsig_{-1}).
\end{align*}
The next lemma shows that this expectation is an accurate approximation of
$\hat{E}(\sigma_{1})$.
\begin{lem}
\label{lem:anneal}We have for an appropriate $c \in (0,1/8)$ that, with probability $1-N^{-c}$,
\begin{align*}
 & \int\sigma_{1} e^{u_{1}\sigma_{1}+\Lambda_{1}\sigma_{1}^{2}}\hat{E}(\sigma_{1})\, \de\sigma_{1}=\\
  & = \int\sigma_{1} e^{u_{1}\sigma_{1}+\Lambda_{1}\sigma_{1}^{2}}E(\sigma_{1})\de\sigma_{1}
 + O\lt(N^{-1/2+c}(|u_1|+N^{-1/2}) \int e^{u_{1}\sigma_{1}+\Lambda_{1}\sigma_{1}^{2}}E(\sigma_1)\de\sigma_{1}\rt).
\end{align*}

On the same event, writing $\overline Z=\int e^{u_1s+\Lambda_1s^2}E(s)\,\de s$, we also have
\begin{align}\label{eq:partial-anneal-denominator}
Z=(1+O(N^{-c}))\overline Z.
\end{align}

\end{lem}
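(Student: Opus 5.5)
We reduce the statement to the high-moment machinery of Lemma~\ref{lem:high-acc-anneal}, applied conditionally on $\sigma_1$ and on the disorder that is not annealed. The disorder not annealed consists of $\bW^{(2)}$, the coefficients $\tg_{1ij}$, $d_i$ and $g_{111}$. For fixed $\sigma_1$ with $|\sigma_1|\le N^{c}$, the inner integral defining $\hat E(\sigma_1)$ is the partition function of a spherical model on $\bsig_{-1}$ with radius $\sqrt{N-\sigma_1^2}$. Its quadratic part is $\Lambda_{-1}+\frac{\sigma_1}{N}\tg_{1\cdot\cdot}$, a GOE-type perturbation of size $|\sigma_1|/\sqrt N\le N^{-1/2+c}$, exactly the situation covered by Lemma~\ref{lem:laplace-moments-2} with $\zeta=\sigma_1/\sqrt N$. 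Its linear field is $u_i+\sigma_1^2 d_i/N$, whose norm is $\le N^{c_0}$ when $\sigma_1^2\le N^{2c}$. Its degree-$\ge3$ part is independent of these, with mixture $\xi_{\ge3}$ rescaled by $(1-\sigma_1^2/N)$. I would restrict everything to the typical set $T(\delta)$ of Lemma~\ref{lem:truncate}; this restriction costs $e^{-cN}$ relative error. I would then rerun the proof of Lemma~\ref{lem:high-acc-anneal} for this conditional model. This gives $\hat E(\sigma_1)=(1+\eps(\sigma_1))E(\sigma_1)$, where $\bbE_{\ge3}|\eps(\sigma_1)|^{2k}\le CN^{-L/2}$ on an event of probability $1-e^{-cN}$ in the quadratic disorder. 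The constants are uniform in $\sigma_1$, because the saddle points stay a fixed distance from the spectrum.

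Next I handle the $\sigma_1$ integral. The weight $e^{u_1 s+\Lambda_1 s^2}(1-s^2/N)^{(N-3)/2}$ behaves like a Gaussian of variance $\asymp1/(2(\gamma_*-\Lambda_1))$, since $\gamma_*-\Lambda_1>\delta$. Together with the ratio $E(s)/E(0)=\exp(O(s^2)+O(s\cdot\text{linear in disorder}))$, this confines $|s|\le N^{c}$ up to errors $e^{-N^{2c}}$. For large $|s|$ I would use the crude a priori bound of Lemma~\ref{lem:high-acc-anneal}'s Markov estimates, or simply $\hat E\le e^{O(N)}$ against Gaussian decay $e^{-cs^2}$ in expectation. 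Writing
\[
\int s\,w(s)\big(\hat E(s)-E(s)\big)\de s=\int s\,w(s)E(s)\,\eps(s)\,\de s,
\]
the naive bound would be $N^{-L/(4k)}\int|s|wE$, which is of order $N^{-c}$ and not good enough. The target error is $N^{-1/2+c}(|u_1|+N^{-1/2})\,\overline Z$. I would therefore center: decompose $\eps(s)=\eps(0)+(\eps(s)-\eps(0))$. The $\eps(0)$ term multiplies $\int s\,wE$, which is itself $O((|u_1|+N^{-1/2})\overline Z)$ by Lemma~\ref{lem:laplace} applied to the effective one-dimensional Gaussian with the cubic linear correction. The increment $\eps(s)-\eps(0)$ is controlled by bounding $\partial_s$ of the conditional fluctuation through a $2k$-th moment computation of the same type. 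Differentiating in $\zeta$ brings down factors controlled by \eqref{eq:quadratic-moment-derivatives}, giving $|\eps(s)-\eps(0)|\lesssim |s|N^{-1/2+c}$ with high probability. Then $\int s^2 wE\lesssim\overline Z$ yields the $N^{-1/2+c}\cdot N^{-1/2}$ contribution. The $|u_1|$ part comes from the mean shift of $s$.

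The denominator claim \eqref{eq:partial-anneal-denominator} follows from the same conditional estimate without the factor $s$: $Z=\int w\hat E=\int wE(1+\eps(s))$, and $\sup_{|s|\le N^c}|\eps(s)|\le N^{-c}$ with probability $1-N^{-c}$. To get this, I would take Markov's inequality on a grid of $s$ values and union bound, with $L$ large enough that $N^{-L/2}$ beats the grid size. Continuity between grid points is handled by the derivative bound.

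The main obstacle is the increment bound for $\eps(s)-\eps(0)$. It requires a uniform high-moment version of Lemma~\ref{lem:high-acc-anneal} for a two-point difference of partition functions with slightly different quadratic parts and fields. This is where the joint expansion of Lemma~\ref{lem:laplace-moments-2} in $(\zeta_1,\zeta_2)$ is essential: the cancellation of the alternating sum over subsets must persist after Taylor expansion in $\zeta$. I would try this first by treating the difference as a derivative in $\zeta$ along the segment and applying the moment cancellation argument to the derivative.
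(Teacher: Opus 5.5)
The decomposition $\eps(s)=\eps(0)+(\eps(s)-\eps(0))$ is sound in principle. It closes only if you have $|\eps(0)|\lesssim N^{-1/2+c}$ and $|\eps(s)-\eps(0)|\lesssim |s|N^{-1+c}$ for $|s|\le a_N$, and the proposal supplies neither.

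\textbf{The first bound.} \cref{lem:high-acc-anneal} cannot deliver it, even rerun on the slices. Its proof fixes $0<L<k$, because the exponential is Taylor expanded only through degree $k-1$. So $\bbE_{\ge3}|\eps|^{2k}\le CN^{-L/2}$ controls $\eps$ only at scale $N^{-L/(4k)}\ge N^{-1/4}$. The true size of $\eps(0)$ is of order $N^{-1/2}$: the relative variance is of order $N^{-1}$, coming from the averages of $N R^4$ and $N^2R^6$, with $R$ the normalized overlap of two slice replicas. The alternating-sum cancellation only removes terms that do not cover all replicas. To reach the $N^{-1/2}$ scale you must actually evaluate the surviving overlap moments with \cref{lem:laplace-moments,lem:laplace-moments-2}. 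With only $|\eps(0)|\lesssim N^{-1/4}$, your first term exceeds the target by a factor $N^{1/4-c}$.

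\textbf{The increment bound.} The rate you state, $|s|N^{-1/2+c}$, does not give what you claim. Combined with $\int s^2wE\lesssim\overline Z$ it yields $N^{-1/2+c}\overline Z$. That is a factor $N^{1/2}$ above the target $N^{-1/2+c}(|u_1|+N^{-1/2})\overline Z$ when $u_1$ is small. You need $N^{-1/2+c}$ per unit of $\zeta=s/\sqrt N$, i.e.\ $|s|N^{-1+c}$. You also want this uniformly in $s$ via a grid and a union bound, which requires sharp $2k$-th moments of the increments. That is exactly the step you flag as the main obstacle and leave unproved, and it is the whole content of the lemma.

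\textbf{How the paper avoids this.} The paper never controls $\eps(s)$ pointwise. It computes a single second moment of the integrated numerator over $(\bg_{3-},\bg_{\ge4})$, namely \eqref{eq:HenceQ2}. There $Q(A_2)$ is the two-replica integral of $st\,f(s)f(t)$ against $e^{NW}-1$, with each replica carrying its own first coordinate. It expands $e^{NW}-1$ into the terms $T_\ell(a,b)$ and bounds the slice overlap moments $M_a(s,t)$ by \cref{lem:laplace-moments-2}. It then uses parity: the even part of $M_a$ against the odd weight $sf(s)$ gains a factor $|u_1|+N^{-1/2}$ from \eqref{eq:annealed-odd-weight}. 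The odd part gains $|s|/\sqrt N$ from \eqref{eq:overlap-parity-bound}. This gives $Q(A_2)\le N^{Cc_0}(N^{-1}|u_1|^2+N^{-2})\overline Z^2$, and Chebyshev yields the claim with probability $1-N^{-c}$, which is all the lemma asserts. The denominator is the same computation with observable $1$.

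\textbf{How to repair your route.} Your centering is the random-field version of this even/odd split. Bound $\int s\,wE\,\eps\,\de s$ in $L^2$ through the kernel $V(s,t)=\bbE[\eps(s)\eps(t)]$ rather than pointwise in $s$. Then you only need the mixed difference $V(s,s)-2V(s,0)+V(0,0)=O(N^{-2+Cc_0}s^2)$, which follows from \eqref{eq:quadratic-moment-derivatives}. No uniformity in $s$ and no high-moment lemma are needed, and the argument essentially becomes the paper's.
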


Before proving Lemma \ref{lem:anneal}, we use it to prove Theorem \ref{lem:spin}.
\begin{proof}[Proof of Theorem \ref{lem:spin}]
For $U(\sigma_1):=N\xi_{\ge4}(1)/2+N\beta_{3}^{2}(1-\sigma_{1}^{2}/N)^{3}/2$, we
have
\[
E(\sigma_{1})=C_N
\Big(1-\frac{\sigma_{1}^2}{N}\Big)^{(N-3)/2} \int
\exp\left(\frac{\sigma_{1}}{N}\sum_{i,j=2}^N\tg_{1ij}\sigma_{i}\sigma_{j}\right)\, e^{H_{\le2}(\bsig_{-1})+U(\sigma_1)}
\, \mu_{0,\sqrt{N-\sigma_{1}^2}}(\de\bsig_{-1})\, .
\]
Again by the calculation in Lemma \ref{lem:laplace}, for $\bV =\bV(\sigma_1) := \bLambda_{-1}+\bDel$, where $\bLambda_{-1}$ is the
diagonal matrix with entries corresponding to the
spectrum of $\bW^{(2)}$, with $\Lambda_1$ replaced by $0$, and
$\bDel:=\sigma_1N^{-1}\tilde{\bG}$ with $\tilde{G}_{ij} = \tg_{1ij}$, and letting $\bu_{\sigma_1}=\bu_{-1}+\sigma_1^2\boldsymbol d/N$,
we have

\begin{align}\label{eq:Esigma1}
E(\sigma_1)
&=(1+O(N^{-1+Cc_0}))(2e)^{-N/2}
\sqrt{\frac{2}{\pi G_{\sigma_1}''(\gamma_*(\sigma_1))}}
\exp\big(U(\sigma_1)+g_{111}\sigma_1^3/N+NG_{\sigma_1}(\gamma_*(\sigma_1))\big),
\end{align}

where we defined
\begin{align}
G_{\sigma_1}(\gamma) &:= (1-\sigma_1^{2}/N)\gamma-\frac{1}{2N}\log\det(\gamma \bI_{N-1}-\bV)+\frac{1}{4N}\<\bu_{\sigma_1},
(\gamma \bI_{N-1}-\bV)^{-1}\bu_{\sigma_1}\>\, ,\\
\gamma_*(\sigma_1) &= \arg\min_{\gamma>\lambda_{\max}(\bV)} G_{\sigma_1}(\gamma)\, .
\end{align}

Here and below $|\sigma_1|\le a_N:=N^{2c_0}\log N$.

 By Lemma \ref{lem:anneal}, we have
\begin{align}
    \label{eq:ApplyAnneal}
\int\sigma_{1}\,\mu(\de\bsig)=
(1+O(N^{-c}))\frac{\int \sigma_1e^{u_{1}\sigma_{1}+\Lambda_{1}\sigma_{1}^{2}}
E(\sigma_1)\de\sigma_{1}}{\int e^{u_{1}\sigma_{1}+\Lambda_{1}\sigma_{1}^{2}}
E(\sigma_1)\de\sigma_{1}}+O\lt(N^{-1/2+c}(|u_1|+N^{-1/2})\rt)\, .
\end{align}
We next estimate these integrals by
approximating their integrands for small
$\sigma_1$. Note that by Lemma \ref{lem:overlap-2}
and Lemma \ref{lem:truncate}, we can restrict these
integrals to $|\sigma_1|\le C\log N$, making a negligible error.

It is easy to see that, for $\sigma_1=0$, we recover $G_{\sigma_1}(\gamma)=G_0(\gamma)$, where $G_0(\gamma)$ is the same function
defined in Eq.~\eqref{eq:GDef}, with $N$ replaced by $N-1$.
To leading order, we can expand
\begin{align*}
&G_{\sigma_{1}}(\gamma) =\\ & = (1-\sigma_{1}^{2}/N)\gamma-\frac{1}{2N}\log\det(\gamma \bI-\bV)+\frac{1}{4N}\<\bu_{\sigma_1},(\gamma \bI-\bV)^{-1}\bu_{\sigma_1}\> \\
 & =(1-\sigma_{1}^{2}/N)\gamma-\frac{1}{2N}\log\det(\gamma \bI-\bV)+\frac{1}{4N}\<\bu_{\sigma_1}(\bI+(\gamma \bI-\bLambda_{-1})^{-1}\bDel+\bE_N)(\gamma \bI-\bLambda_{-1})^{-1}\bu_{\sigma_1}\>.
\end{align*}
Here, $\|\bE_N\|_{\op} = O(N^{-1+Cc_0})$ with probability
$1-\exp(-cN)$ over $\bW^{(3)}$. Writing $\bR_\gamma=(\gamma\bI-\bLambda_{-1})^{-1}$, independence and Gaussian concentration give
\[
|\boldsymbol d^T(\gamma\bI-\bV)^{-1}\bu_{-1}|\le N^{c_0}(1+\|\bu\|),\qquad
|\boldsymbol d^T(\gamma\bI-\bV)^{-1}\boldsymbol d|\le CN.
\]
Thus replacing $\bu_{\sigma_1}$ by $\bu_{-1}$ costs $O(N^{-2+Cc_0})$ in $G_{\sigma_1}$. Therefore,

\begin{align}\label{eq:GdiffFirst}
G_{\sigma_1}(\gamma)-G(\gamma)
&=-\frac{\gamma\sigma_1^2}{N}+\frac1{2N}\log(\gamma-\Lambda_1)
-\frac{u_1^2}{4N(\gamma-\Lambda_1)}\nonumber\\
&\qquad-\frac1{2N}\log\det(\bI-\bR_\gamma^{1/2}\bDel\bR_\gamma^{1/2})
+\frac1{4N}\langle\bu_{-1},\bR_\gamma\bDel\bR_\gamma\bu_{-1}\rangle+O(N^{-2+Cc_0}).
\end{align}

This holds for $\gamma>\max_i\Lambda_i+\eps$. The same estimates hold after a fixed number of derivatives. Since $G$ is strongly convex in this neighborhood, $\gamma_*(\sigma_1)=\gamma_*+O(N^{-1+Cc_0})$.

 We next expand the log-determinant term in the difference. Defining
 
 \[
\bR=(\gamma_*\bI-\bLambda_{-1})^{-1},\qquad
\bB=\sigma_1\bR^{1/2}\tilde{\bG}\bR^{1/2}/N,\qquad
D_2=N^{-2}\Tr(\bR\tilde{\bG}\bR\tilde{\bG}),
\]
we have
\begin{align*}
\Tr(\bB)&=\frac{\sigma_1}{N}\sum_{i\ne1}(\gamma_*-\Lambda_i)^{-1}\tg_{1ii},\\
\Tr(\bB^2)&=D_2\sigma_1^2,\\
\sum_{k\ge3}\frac1k\Tr(\bB^k)&=O(N^{-1+Cc_0}).
\end{align*}
For the last estimate, the cubic trace has mean zero and second moment $O(N^3)$ before multiplication by $(\sigma_1/N)^3$; the terms of order at least four are bounded by the operator norm. Gaussian moment bounds give these estimates with probability $1-e^{-N^{c'}}$ for some $c'>0$.
Thus, with high probability,
\[
-\frac12\log\det(\bI-\bB)
=\frac{\sigma_1}{2N}\sum_{i\ne1}(\gamma_*-\Lambda_i)^{-1}\tg_{1ii}
+\frac{D_2}{4}\sigma_1^2+O(N^{-1+Cc_0}).
\]
Furthermore, we can compute
\[
\bbE_{\tilde{\bG}}D_2
=\frac{3\beta_3^2}{2N^2}\big((\Tr\bR)^2+\Tr\bR^2\big),\qquad
\operatorname{Var}_{\tilde{\bG}}(D_2)=O(N^{-2}).
\]
Recall the stationarity condition $G'(\gamma_*)=0$, which yields
\[
\frac1{2N}\sum_{i=1}^N\frac1{\gamma_*-\Lambda_i}
=1-\frac1{4N}\sum_{i=1}^N\frac{u_i^2}{(\gamma_*-\Lambda_i)^2}.
\]
For $\|\bu\|\le N^{c_0}$, this gives $\Tr\bR=2N+O(N^{2c_0})$, and therefore
\begin{align}\label{eq:ED2}
\bbE_{\tilde{\bG}}D_2=6\beta_3^2+O(N^{-1+Cc_0}),\qquad
D_2-\bbE_{\tilde{\bG}}D_2=O(N^{-1+Cc_0}).
\end{align}

After substituting the above estimates into Eq.~\eqref{eq:GdiffFirst},
the following holds with probability at least
$1-\exp(-N^c)$,

\begin{align*}
N\big(G_{\sigma_1}(\gamma_*(\sigma_1))-G(\gamma_*)\big)
&=-\gamma_*\sigma_1^2+\frac12\log(\gamma_*-\Lambda_1)-\frac{u_1^2}{4(\gamma_*-\Lambda_1)}\\
&\qquad+\frac{\sigma_1}{2N}\sum_{i\ne1}(\gamma_*-\Lambda_i)^{-1}\tg_{1ii}
+\frac{D_2}{4}\sigma_1^2+O(N^{-1+Cc_0}).
\end{align*}
Here the field term is bounded using $\bu_{-1}^T\bR\tilde{\bG}\bR\bu_{-1}=O(N^{c_0}(1+\|\bu\|)^2)$, and the saddle displacement changes the exponent and prefactor by $O(N^{-1+Cc_0})$.
Using Eq.~\eqref{eq:Esigma1} and
\[
U(\sigma_1)=\frac N2\xi_{\ge3}(1)-\frac32\beta_3^2\sigma_1^2+O(N^{-1+Cc_0}),
\]
we obtain
\begin{align}\label{eq:annealed-slice-density}
e^{u_1\sigma_1+\Lambda_1\sigma_1^2}E(\sigma_1)
&=K_N\exp\big(h_1\sigma_1-D_1\sigma_1^2+O(N^{-1+Cc_0})\big),\\
D_i&:=\gamma_*-\Lambda_i,\qquad
h_1:=u_1+\frac1{2N}\sum_{i\ne1}\frac{\tg_{1ii}}{D_i},\nonumber\\
K_N&:=e^{N\xi_{\ge3}(1)/2}Z_{\le2}\sqrt{\frac{D_1}{\pi}}
\exp\Big(-\frac{u_1^2}{4D_1}\Big).\nonumber
\end{align}
In particular, the quadratic terms $D_2\sigma_1^2/4$ and $-3\beta_3^2\sigma_1^2/2$ cancel to the required order, and $g_{111}\sigma_1^3/N=O(N^{-1+Cc_0})$.

We may restrict the integrals in \eqref{eq:ApplyAnneal} to $[-a_N,a_N]$. Indeed, averaging over the retained cubic variables gives $e^{N\xi_{\ge3}(1)/2}$ times the corresponding quadratic integrals, whose tails are bounded by $e^{-ca_N^2}Z_{\le2}$. On the other hand, integrating \eqref{eq:annealed-slice-density} over $|\sigma_1|\le1$ gives $\overline Z\ge e^{N\xi_{\ge3}(1)/2}Z_{\le2}e^{-CN^{2c_0}}$. Markov's inequality bounds the normalized tails, also with any fixed power of $|\sigma_1|$, by $e^{-N^{c'}}$, with probability $1-e^{-N^{c'}}$ after decreasing $c'$.

Thus, for $j=0,1$,
\begin{align*}
\int\sigma_1^je^{u_1\sigma_1+\Lambda_1\sigma_1^2}E(\sigma_1)\de\sigma_1
&=K_N\left\{\int_{\bbR}\sigma_1^je^{h_1\sigma_1-D_1\sigma_1^2}\de\sigma_1
+O(N^{-1+Cc_0})\int_{\bbR}e^{h_1\sigma_1-D_1\sigma_1^2}\de\sigma_1\right\}.
\end{align*}
Evaluating the Gaussian integrals, we get
\begin{align}\label{eq:cal-E}
\frac{\int\sigma_1e^{u_1\sigma_1+\Lambda_1\sigma_1^2}E(\sigma_1)\de\sigma_1}
{\int e^{u_1\sigma_1+\Lambda_1\sigma_1^2}E(\sigma_1)\de\sigma_1}
&=\frac{u_1+(2N)^{-1}\sum_{i\ne1}(\gamma_*-\Lambda_i)^{-1}\tg_{1ii}}
{2(\gamma_*-\Lambda_1)}+O(N^{-1+Cc_0}).
\end{align}
Since $\tg_{1ii}=3NW^{(3)}_{1ii}$, this gives the cubic coefficient $3/(4D_1)$. Including $i=1$ in the sum changes the answer by $O(N^{-1+Cc_0})$.
The claim now follows from Eq.~\eqref{eq:ApplyAnneal}, taking $c_0$ sufficiently small and then decreasing $c$.

\end{proof}
Finally, we prove Lemma \ref{lem:anneal}. The main idea is that
the error in annealing can be controlled by accurate estimates of certain quantities involving overlaps
under the quadratic model on $\bsig_{-1}$. These estimates follow from the Laplace
transform and an expansion of the dependence on $\sigma_1$.
\begin{proof}[Proof of Lemma \ref{lem:anneal}]
Define
\begin{align*}
W(\bsig^1,\bsig^2) &:= \frac{1}{N}\bbE\Big\{\big(H_3(\bsig_{-1}^1)+H_{\ge 4}(\bsig^1)\big)\big(H_3(\bsig_{-1}^2)+H_{\ge 4}(\bsig^2)\big)\Big\}\\
&=\beta_{3}^{2}\langle\bsig_{-1}^{1},\bsig_{-1}^{2}\rangle_N^{3}
+\xi_{\ge4}\big(\langle\bsig_{-1}^{1},\bsig_{-1}^{2}\rangle_N+\sigma_1^{1}\sigma_1^{2}/N\big)\, ,
\end{align*}
where, with an abuse of notation,
$H_3(\bsig_{-1}^a):=N^{-1}\sum_{i,j,k=2}^Ng_{ijk}\sigma^a_i\sigma^a_j\sigma^a_k$ (and a similar notation will be used for $H_{\le 2}(\bsig_{-1}^a)$
below). Note that $W(\bsig^1,\bsig^1) = \xi_{\ge4}(1) + \beta_3^2 (1 - (\sigma_1^1)^2/N)^3$.

For a deterministic Borel set $U\subseteq S_N^2$, define
\[
Q(U)=\int_U\sigma_1^1\sigma_1^2
\prod_{j=1}^2\big(\bbE_{\bg_{3-},\bg_{\ge4}}e^{H(\bsig^j)}\big)
\big(e^{NW(\bsig^1,\bsig^2)}-1\big)\mu_0^{\otimes2}(\de\bsig).
\]

Write $T=T(\delta)$ and
\[
X_T=\int_T\sigma_1e^{H(\bsig)}\mu_0(\de\bsig),\qquad
\overline X=\int\sigma_1\bbE_{\bg_{3-},\bg_{\ge4}}e^{H(\bsig)}\mu_0(\de\bsig),\qquad
\overline Z=\int\bbE_{\bg_{3-},\bg_{\ge4}}e^{H(\bsig)}\mu_0(\de\bsig).
\]
Choose $c>0$ sufficiently small, and then $c_0$ sufficiently small given $c$. We condition on the quadratic disorder, outside an event of probability $e^{-cN}$; the typicality errors in \cref{lem:truncate,lem:key-error} may first be conditioned by Markov's inequality. By \cref{lemma:A-T-comparison,lem:OrthogonalTuples}, we may then replace $T^2$ by $A_2=A_2(N^{-1/2+c})$ in $\bbE_{\bg_{3-},\bg_{\ge4}}X_T^2$. Markov's inequality over the retained cubic variables and the quadratic lower bound for $\overline Z$ give, for some $c'>0$, with probability $1-e^{-N^{c'}}$,
\begin{align*}
\bbE_{\bg_{3-},\bg_{\ge4}}X_T^2
&=\int_{A_2}\sigma_1^1\sigma_1^2
\prod_{j=1}^2\big(\bbE_{\bg_{3-},\bg_{\ge4}}e^{H(\bsig^j)}\big)e^{NW(\bsig^1,\bsig^2)}\mu_0^{\otimes2}(\de\bsig)
+O(e^{-N^{c'}}\overline Z^2),\\
|\bbE_{\bg_{3-},\bg_{\ge4}}X_T-\overline X|&\le e^{-N^{c'}}\overline Z.
\end{align*}
For $\overline X^2$, the same restriction follows from \cref{lem:overlap-2}, uniformly in the slice perturbations below, after discarding $|\sigma_1^j|>a_N$. Hence
\begin{align}\label{eq:HenceQ2}
\bbE_{\bg_{3-},\bg_{\ge4}}(X_T-\overline X)^2
\le Q(A_2)+e^{-N^{c'}}\overline Z^2.
\end{align}

By Taylor expansion, always using the shorthand $A_2=A_2(N^{-1/2+c})$,

\begin{align*}
Q(A_2)
&=\int_{A_2}\sigma_1^1\sigma_1^2\prod_{j=1}^2\big(\bbE_{\bg_{3-},\bg_{\ge4}}e^{H(\bsig^j)}\big)
\left\{\sum_{\ell=1}^L\frac{(NW(\bsig^1,\bsig^2))^\ell}{\ell!}\right\}
\mu_0^{\otimes2}(\de\bsig)+O(N^{-3}\overline Z^2).
\end{align*}
Here $L$ is sufficiently large. To make this expansion, we first restrict to $|\sigma_1^1|,|\sigma_1^2|\le a_N=N^{2c_0}\log N$. For the $e^{NW}$ term, averaging the absolute integrand over the retained cubic variables gives the full covariance $\xi_{\ge3}(\langle\bsig^1,\bsig^2\rangle_N)$, whose product with $N$ is $O(N^{-1/2+3c})$ on $A_2$. The quadratic tails, Markov's inequality, and the lower bound for $\overline Z$ give an error $O(e^{-N^{c'}}\overline Z^2)$. For the $-1$ term and the Taylor terms, the same bound follows from the tails in \eqref{eq:annealed-slice-density}.

We estimate each term
\begin{align}\label{eq:Taylor-T-ell}
T_\ell(a,b)&:=\int_{A_2}\sigma_1^1\sigma_1^2\prod_{j=1}^2\big(\bbE_{\bg_{3-},\bg_{\ge4}}e^{H(\bsig^j)}\big)
\cdot N^\ell\langle\bsig_{-1}^1,\bsig_{-1}^2\rangle_N^a
(\sigma_1^1\sigma_1^2/N)^b\,\mu_0^{\otimes2}(\de\bsig).
\end{align}
We can restrict ourselves to terms with $a\ge3\ell$ and $b=0$, or $a+b\ge3\ell+1$, since these are the terms that can arise in $Q(A_2)$.
Truncating the series for $\xi_{\ge4}$ at a sufficiently large fixed degree makes an additional error $O(N^{-3}\overline Z^2)$.
Let $\widehat T_\ell(a,b)$ be the same integral over $S_N^2$.
By Lemma \ref{lem:overlap-2}, applied to the quadratic slices below,
\[
|T_\ell(a,b)-\widehat T_\ell(a,b)|\le e^{-N^{c'}}\overline Z^2.
\]

Write $s=\sigma_1^1$, $t=\sigma_1^2$, and $f(s)=e^{u_1s+\Lambda_1s^2}E(s)$, so that $\overline Z=\int f(s)\,\de s$. Let $\nu_s$ be the normalized quadratic Gibbs measure on the sphere of radius $\sqrt{N-s^2}$, with matrix $\bLambda_{-1}+s\tilde{\bG}/N$ and field $\bu_{-1}+s^2\boldsymbol d/N$. Then
\begin{align*}
M_a(s,t)&:=\int\langle\bsig_{-1}^1,\bsig_{-1}^2\rangle^a\,
\nu_s(\de\bsig_{-1}^1)\nu_t(\de\bsig_{-1}^2),\\
\widehat T_\ell(a,b)
&=N^{\ell-a-b}\int_{[-a_N,a_N]^2}s^{b+1}t^{b+1}M_a(s,t)f(s)f(t)\,\de s\,\de t
+O(e^{-N^{c'}}\overline Z^2).
\end{align*}
Applying Lemma \ref{lem:laplace-moments-2}, we have

\begin{align}\label{eq:overlap-parity-bound}
|M_a(s,t)|&\le N^{\lfloor a/2\rfloor+C_ac_0},\nonumber\\
|M_a(s,t)-M_a(-s,t)|&\le N^{\lfloor a/2\rfloor+C_ac_0}\frac{|s|}{\sqrt N},\\
|M_a(s,t)-M_a(-s,t)-M_a(s,-t)+M_a(-s,-t)|
&\le N^{\lfloor a/2\rfloor+C_ac_0}\frac{|st|}{N},\nonumber
\end{align}
and the analogous estimate with $s,t$ interchanged. Indeed, the radius and field are even in each variable, whereas the matrix perturbations have derivatives of operator norm $O(N^{-1/2})$; we apply \eqref{eq:quadratic-moment-derivatives} and the mean value theorem.

Note that Eq.~\eqref{eq:annealed-slice-density} and its tail estimate give, for each fixed $j\ge0$,
\begin{align}\label{eq:annealed-odd-weight}
\int |s|^j f(s)\,\de s&\le N^{C_jc_0}\overline Z,\nonumber\\
|f(s)-f(-s)|&\le N^{Cc_0}(|u_1|+N^{-1/2})(1+|s|)\big(f(s)+f(-s)\big),\qquad |s|\le a_N.
\end{align}
When $b$ is even, $s^{b+1}$ and $t^{b+1}$ are both odd. Applying \eqref{eq:annealed-odd-weight} to the even parts of $M_a$, and \eqref{eq:overlap-parity-bound} to its odd parts, gives a factor $|u_1|+N^{-1/2}$ from each variable. When $b$ is odd, we bound the integral absolutely.
Therefore, under the assumption $\|\bu\|\le N^{c_0}$, for $\ell\le L$,

\begin{align*}
\frac{|\widehat T_\ell(a,b)|}{\overline Z^2}
&\le N^{\ell-a-b+\lfloor a/2\rfloor+Cc_0}
\begin{cases}
(|u_1|+N^{-1/2})^2,&b\text{ even},\\
1,&b\text{ odd},
\end{cases}\\
&\le N^{Cc_0}O_L(N^{-2}+N^{-3/2}|u_1|+N^{-1}|u_1|^2)
=N^{Cc_0}O_L(N^{-1}|u_1|^2+N^{-2}),
\end{align*}
where in the last step we used the fact that $\ell\ge1$, and $a\ge3\ell$ when $b=0$, or $a+b\ge3\ell+1$ otherwise.
Combining the terms in Eq.~\eqref{eq:Taylor-T-ell}, we obtain
\[
Q(A_2)\le N^{Cc_0}O(N^{-1}|u_1|^2+N^{-2})\overline Z^2,
\]
and therefore, using Eq.~\eqref{eq:HenceQ2}, with probability at least $1-N^{-c}$,
\[
|X_T-\overline X|\le N^{-1/2+c}(|u_1|+N^{-1/2})\overline Z.
\]
Here we choose $c_0$ sufficiently small given $c$ and use Chebyshev's inequality. The same calculation with the observable $1$ gives a relative second moment $O(N^{-1+Cc_0})$, proving \eqref{eq:partial-anneal-denominator}. Finally, Lemma \ref{lem:truncate} removes the restriction to $T$.

\end{proof}

We note that (\ref{eq:Magn-No-Field}) in Lemma \ref{lem:high-acc-anneal} immediately gives the following high
probability bound on the magnetization.
\begin{lem}
\label{lem:high-prob}For any $\eps,C>0$, there exists $c_0>0$ such that, for $\|\bu\| \le N^{c_0}$, with probability at
least $1-N^{-C}$, we have
\begin{align}
\left\|\int \bsig\mu(\de\bsig)\right\|_{}^{2}\le N^{\eps}\, , \label{eq:highprob-mag}
\end{align}
for $N$ sufficiently large.
\end{lem}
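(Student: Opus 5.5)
The plan is to derive \eqref{eq:highprob-mag} from \eqref{eq:Magn-No-Field}, applied coordinatewise in the eigenbasis $(\bv_i)_{i\le N}$ of $\bW^{(2)}$, followed by a union bound over $i\in[N]$. Since the basis is orthonormal,
\[
\Big\|\int\bsig\,\mu(\de\bsig)\Big\|^2=\sum_{i=1}^N\Big(\int\langle\bv_i,\bsig\rangle\,\mu(\de\bsig)\Big)^2 .
\]
It therefore suffices to show that, outside an event of probability $N^{-C}$, every coordinate satisfies
\[
\Big|\int\langle\bv_i,\bsig\rangle\,\mu(\de\bsig)\Big|\le N^{\eps'}\big(|\langle\bv_i,\bu\rangle|+N^{-1/2}\big).
\]
Summing the squares then gives at most $2N^{2\eps'}(\|\bu\|^2+1)\le N^{\eps}$, once $\eps'$ and $c_0$ are small compared with $\eps$.

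For each fixed $i$, I would write
\[
\int\langle\bv_i,\bsig\rangle\,\mu(\de\bsig)=\frac{\int_T\langle\bv_i,\bsig\rangle e^{H}\,\mu_0(\de\bsig)+\int_{T^c}\langle\bv_i,\bsig\rangle e^{H}\,\mu_0(\de\bsig)}{Z}
\]
with $T=T(\delta)$, and treat the two pieces as follows.
\begin{itemize}
\item \textbf{Typical part.} Apply \eqref{eq:Magn-No-Field} with $\eps'$ in place of $\eps$ and with $k$ an integer so large that $2\eps' k>C+1$. The bad event then has probability $O(N^{-C-1})+e^{-N/C'}$. Choose $c_0$ so small that the factor $(1+\|\bu\|)^{C'k}\le N^{C'kc_0}$ is absorbed into $N^{\eps'}$.
\item \textbf{Atypical part.} Bound it trivially by $\sqrt N\,Z(T^c)\le \sqrt N e^{-Nc_2}Z$, using \eqref{eq:truncate}. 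This is negligible.
\item \textbf{Denominator.} Replace $\bbE_{\ge3}Z$ by $Z$. By \eqref{eq:HighAcc-PartitionFun} with $\eps=1/2$, together with \eqref{eq:Z3T}, we have $Z\ge \tfrac12\bbE_{\ge3}Z(T)\ge\tfrac13\bbE_{\ge3}Z$, outside an event of probability $C N^{-L/2}+e^{-N/C}$. Take $L>2C+2$ (and hence $k>L$).
\end{itemize}

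The probability bounds in Lemma~\ref{lem:high-acc-anneal} are stated for each fixed $i$, and the constants are uniform in $i$. A union bound over the $N$ coordinates therefore costs only a factor $N$, and the per-coordinate failure probability $O(N^{-C-1})$ makes the total $O(N^{-C})$.

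The one point needing care is the order of parameter choices. The target exponent $C$ dictates $L$ and $k$; these in turn must be compatible with $\eps'$; and only then is $c_0$ chosen, small enough both for Lemma~\ref{lem:high-acc-anneal} to apply with these $L,k$ and for $N^{C'kc_0}\le N^{\eps'}$. This is the same order the paper uses elsewhere: ``$L$ large, then $k>L$, then $c_0$ small.'' Once it is respected, the rest is bookkeeping.
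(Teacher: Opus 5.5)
Your proposal is correct and follows essentially the same route as the paper. The paper also bounds each coordinate in the eigenbasis of $\bW^{(2)}$ using \eqref{eq:Magn-No-Field}, controls the denominator via \eqref{eq:HighAcc-PartitionFun} and \eqref{eq:Z3T} and the atypical part via Lemma~\ref{lem:truncate}, takes a union bound over $i$ with $L$, then $k>L$, then $c_0$ chosen in that order, and finally squares and sums; your explicit treatment of the $T^c$ piece and of the parameter order just fills in what the paper's short proof leaves implicit.
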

\begin{proof}
We work as before in the eigenbasis of $\bW^{(2)}$. By \cref{eq:Magn-No-Field,eq:HighAcc-PartitionFun,eq:Z3T} and Lemma \ref{lem:truncate}, choosing the decay parameter $L$ sufficiently large, the integer
moment order $k>L$ sufficiently large for the coordinate union bound,
and then $c_0$ sufficiently small, with probability at least $1-N^{-C}$,
\[
|\langle\sigma_i\rangle|\le N^{\eps/4}(|u_i|+N^{-1/2})\qquad(i\in[N]).
\]
Squaring and summing over $i$, we obtain, after decreasing $c_0$,
\[
\|\langle\bsig\rangle\|^2\le 2N^{\eps/2}(1+\|\bu\|^2)\le N^\eps.
\]
\end{proof}

Theorem \ref{lem:correction-band} now follows.
\begin{proof}[Proof of Theorem \ref{lem:correction-band}]
Let $\be=\langle\bsig\rangle-\hbm$. We condition on $\bW^{(2)}$, outside an event of probability $e^{-cN}$; the proofs of \cref{lem:spin,lem:high-prob} give the same conditional probability bounds. By \cref{lem:spin}, for each $i$, outside an event of conditional probability $N^{-c}$,
\[
|e_i|^2\le N^{-c}(|u_i|^2+N^{-1}),
\]
after decreasing $c$. On the other hand, the coordinate estimate in the proof of \cref{lem:high-prob}, together with Gaussian concentration of the cubic correction, gives an event $\cE_D$ of conditional probability at least $1-N^{-D}$ on which
\[
|e_i|^2\le N^\eps(|u_i|^2+N^{-1})\quad(i\in[N]),\qquad
\|\be\|^2\le N^\eps(1+\|\bu\|^2).
\]
Consequently, summing the coordinate expectations,
\[
\bbE_{\ge3}[\|\be\|^2\bfone_{\cE_D}]
\le C N^{-c+\eps}(1+\|\bu\|^2).
\]
For $\alpha\ge2$, multiply by the bound on $\|\be\|^{\alpha-2}$ on $\cE_D$. The complementary event contributes $O(N^{-D/2+C_\alpha})$: indeed, $\|\langle\bsig\rangle\|\le\sqrt N$, the stationarity equation gives $(\gamma_*-\Lambda_i)^{-1}\le2N$, and the cubic coefficients have Gaussian moments. Choosing $D$ large, and then $\eps,c_0$ small, gives $\bbE\|\be\|^\alpha=O(N^{-c'})$ for some $c'>0$. The theorem follows.
\end{proof}

\subsection*{Acknowledgements}

BH was supported by a Google PhD Fellowship, NSF CAREER grant DMS-1940092, and the 
Solomon Buchsbaum Research Fund at MIT.
AM was partially supported by the NSF Awards DMS MFAI-2501597
and DMS MFAI-2601871.
HTP was supported by a Clay Research Fellowship, a Sloan Research Fellowship, 
NSF Career grant DMS-2543870, and a Bell System Fellowship.

\bibliographystyle{alpha}
\bibliography{bib}

\end{document}